\numberwithin{equation}{section}
\theoremstyle{plain}
\newtheorem{thm}{Theorem}[section]
\newtheorem{coro}[thm]{Corollary}
\newtheorem{prop}[thm]{Proposition}
\newtheorem{lem}[thm]{Lemma}
\newtheorem{defi}[thm]{Definition}
\theoremstyle{definition}
\theoremstyle{remark}
\newcommand\N{{\mathbb N}}
\newcommand{\R}{\mathbb{R}}
\newcommand{\C}{\mathbb{C}}
\newcommand\pref[1]{(\ref{#1})}
\let \eps\varepsilon
\newcommand{\calP}{{\mathcal P}}
\newcommand{\calN}{{\mathcal N}}
\newcommand{\bfp}{{\mathbf p}}
\newcommand{\ttr}{{\tt r}}
\newcommand{\ttg}{{\tt g}}
\newcommand{\ttb}{{\tt b}}
\newcommand{\parti}{{\mathrm{Part}}}
\DeclareMathOperator{\id}{id}
\def\<#1,#2>{\left<#1,#2\right>}
\newcommand\xx{\mathbf{x}}
\def\PP{{\cal P}}
\def\PPs{{{\cal P}}_{\mathrm{sym}}}
\def\TV{\mathrm{TV}}
\def\PPnr{{{\cal P}}_{N-\mathrm{rep}}}
\newcommand\calS{{\mathcal S}}
\newcommand\calI{{\mathcal I}}
\newcommand\ext{{\mathrm{ext}\,}}
\title{Convex geometry of finite exchangeable laws and de Finetti style representation 
with universal correlated corrections
}
\author{Guillaume Carlier\thanks{CEREMADE, UMR CNRS 7534, Universit\'e Paris IX
Dauphine %, Pl. de Lattre de Tassigny, 75775 Paris Cedex 16, FRANCE 
and INRIA-Paris, MOKAPLAN,
\texttt{carlier@ceremade.dauphine.fr}}
\and
Gero Friesecke\thanks{Faculty of Mathematics, Technische Universit\"at M\"unchen,
\texttt{gf@ma.tum.de}}
\and
Daniela V\"{o}gler\thanks{Faculty of Mathematics, Technische Universit\"at M\"unchen,
\texttt{voegler@ma.tum.de}}
}
\begin{document}

\maketitle

\begin{abstract} 
%\textcolor{blue}{The celebrated de Finetti-Hewitt-Savage theorem says that if $(Z_n)_n$ is an infinite exchangeable sequence of random variables with values in some Polish space $X$, then the law $\mu_k$ of $(Z_1,...,Z_k)$ is a convex superposition of independent measures, $\mu_k=\int_{\calP(X)} \lambda^{\otimes k} d\alpha(\lambda)$ for some probability measure $\alpha$ on the set $\calP(X)$ of probability measures on $X$.}
We present a novel analogue for finite exchangeable sequences of the de Finetti, Hewitt and Savage theorem and investigate its implications for multi-marginal optimal transport (MMOT) and Bayesian statistics. If $(Z_1,...,Z_N)$ is a finitely exchangeable sequence of $N$ random variables taking values in some Polish space $X$, we show that the law $\mu_k$ of the first $k$ components has a representation of the form 
$$ 
     \mu_k=\int_{\PP_{\frac{1}{N}}(X)} F_{N,k}(\lambda) \, \mbox{d} \alpha(\lambda)
$$
for some probability measure $\alpha$ on the set of $\frac{1}{N}$-quantized probability measures on $X$ and certain universal polynomials $F_{N,k}$. The latter consist of a leading term $N^{k-1}\! /\prod_{j=1}^{k-1}(N\! -\! j) \lambda^{\otimes k}$ and a finite, exponentially decaying series of correlated corrections of order $N^{-j}$ ($j=1,...,k$). The $F_{N,k}(\lambda)$ are precisely the extremal such laws, expressed via an explicit polynomial formula in terms of their one-point marginals $\lambda$.  
%which can arise from a finite exchangeable sequence $(Z_1,...,Z_N)$ of $N$ random variables by taking its first $k$ components, thereby extending the known result that the extremal laws for $k=N$ are those of urn sequences \cite{Kal05, KS06}. Moreover we derive an explicit polynomial expression for the extremal $k$-point laws in terms of their one-point marginals $\lambda$
Applications include novel approximations of MMOT via polynomial convexification and the identification of   
%$$\mu_k=\int_{{\calP}(X)} F_{N,k}(\lambda) d\alpha(\lambda),$$ 
%where $F_{N,k}$ is a universal degree-$k$ polynomial and $\alpha$ is a probability measure on the set of probability measures on $X$. [What is X? I give up...too detailed!]
% enables us to identify 
the remainder which is estimated in the celebrated error bound of Diaconis-Freedman \cite{Dia-Free} between finite and infinite exchangeable laws.  %, where $N$ is the length of the finite exchangeable sequence. 

\end{abstract}

\textbf{Keywords:} $N$-representability, finite exchangeability, de Finetti, Hewitt-Savage theorem, multi-marginal optimal transport, Bayesian statistics, extremal measures, Choquet theory
\medskip

%\textbf{MS Classification:}.

\section{Introduction}\label{sec-intro} 

Multi-marginal optimal transport (MMOT) has attracted a great deal of attention in recent years. The relevance of MMOT to tackle challenging problems arising from electronic density functional theory was established in \cite{CFK, BDG}. In this context, one has to find the joint density of $N$ electrons with fixed one-point marginal so as to minimize a total repulsive Coulombian cost. Even though the problem is difficult for large $N$, it is symmetric (invariant under permutations of the electrons) and only depends on the two-point marginal of the joint law of the $N$ electrons ($2$-body interaction). Whether symmetries and few-body interactions are helpful to analyze such MMOT problems is a natural question. An interesting result from \cite{CFP} relying on the fact that the Coulomb potential has a positive Fourier transform and the de Finetti, Hewitt and Savage theorem is that when one lets $N$ go to $+\infty$, the optimal plan is the independent (infinite product) measure. This is in striking contrast with the more standard two-marginal optimal transport where, for typical costs including the Coulomb cost, optimal plans are sparse and concentrate on low-dimensional subsets of the product space \cite{Brenier, Gangbo-McCann, CFK}. The present paper is motivated by MMOT for a possibly large but finite number of marginals $N$ and symmetric $k$-body (with $k\leq N$) interaction cost. We present a novel explicit analogue of  the de Finetti, Hewitt and Savage theorem and investigate its implications for such problems. We also briefly indicate implications for Bayesian statistics. 

The main technical novelty in our work is the construction of an explicit polynomial inverse of the marginal map from extremal $N$-representable $k$-point probability measures (see below for terminology) to $1$-point probability measures. This extends previous results for $2$-point \cite{GFDV} and $3$-point \cite{KhooYing} measures on finite state spaces to arbitrary $k$ and general Polish spaces. 

Our ensuing finite version of de Finetti 
yields a complete, finite, exponentially decaying series of correlated corrections which need to be added to the independent measure in the case of finite $N$. This explicitly identifies the remainder estimated in the celebrated error bound of Diaconis-Freedman \cite{Dia-Free} between finite and infinite exchangeable laws.
%by giving a complete, exponentially decaying series of correlated corrections which need to be added in the case of finite $N$. 

In the remainder of this introduction we first recall the celebrated de Finetti-Hewitt-Savage theorem, then describe in more detail what changes in the finite exchangeable case.

\smallskip

{\bf De Finetti-Hewitt-Savage.} Recall that a sequence $(Z_i)_{i\in\N}$ of random variables taking values in a Polish space $X$ is called exchangeable if the law of $(Z_1,Z_2,...)$ equals that of $(Z_{\sigma(1)},Z_{\sigma(2)},...)$ for each finite permutation $\sigma$ of $\N$, that is each permutation which leaves all but finitely many elements unchanged. The de Finetti-Hewitt-Savage theorem says that any such sequence is a convex mixture of i.i.d. sequences. In other words, the law of $(Z_1,Z_2,...)$ is a convex combination of independent measures, 
\begin{equation} \label{intro:1}
  \mu = \int_{\calP(X)} \lambda^{\otimes\infty} d\alpha(\lambda)
\end{equation}
for some probability measure (or prior) $\alpha$ on the set $\calP(X)$ of probability measures on $X$. In Bayesian language, this says that the general infinite exchangeable sequence $(Z_i)$ is obtained by first picking some distribution $\lambda$ on $X$ at random from some prior, then taking $(Z_i)$ to be i.i.d. with distribution $\lambda$. For comprehensive reviews of exchangeability we refer the reader to Aldous \cite{Aldous85} and Kallenberg \cite{Kal05}. 

{\bf Finite exchangeability; finite extendibility.} A sequence $(Z_1,...,Z_N)$ is called finitely exchangeable if its law equals that of $(Z_{\sigma(1)},...,Z_{\sigma(N)})$ for any permutation $\sigma$ of $\{1,...,N\}$. For $k\le N$, a sequence $(Z_1,...,Z_k)$ is called finitely extendible if its law equals that of the first $k$ elements of some finitely exchangeable sequence $(\tilde{Z}_1,...,\tilde{Z}_N)$.\footnote{Analogously, $(Z_1,...,Z_k)$ is called infinitely extendible if its law equals that of the first $k$ elements of an infinite exchangeable sequence $(\tilde{Z}_1,\tilde{Z}_2,...)$.}

For finite exchangeable sequences $(Z_1,...,Z_N)$ it is well-known that the analogous representation to \eqref{intro:1} with $\lambda^{\otimes\infty}$ replaced by 
$\lambda^{\otimes N}$ does not hold, see Diaconis \cite{Diaconis77},Diaconis and Freedman \cite{Dia-Free},  Jaynes \cite{Ja86}; the error is known to be of order $\frac{1}{N}$ in total variation \cite{Dia-Free,Bobkov}. 

The main approach for describing finite exchangeable sequences which has been introduced in the probability literature is to write such a sequence as a superposition of i.i.d. sequences but drop the requirement that the superposition of the laws be convex, i.e. allows signed measures $\alpha$ in \eqref{intro:1}, see Dellacherie and Meyer \cite{DellaMeyer}, Jaynes \cite{Ja86}, Kerns and Sz\'{e}kely \cite{KS06}, Janson, Konstantopoulos and Yuan \cite{JKT16}. For applications, this approach has limited appeal, for two reasons. First, the signed measure representation is not unique. Second, the superposition does not yield a probability measure for an arbitrary signed $\alpha$, but remaining within probability measures is mandatory for recovering an exchangeable law by sampling (see below)
%, for identifying the remainder estimated by Diaconis and Freedman, 
and for our application to MMOT.

A very interesting second picture of finite exchangeability which appears not to have received the attention it deserves can be found in Kerns and Sz\'{e}kely \cite{KS06} and Kallenberg \cite{Kal05}. Namely, 
finite exchangeable sequences are convex mixtures of ``urn sequences'', or equivalently, finitely exchangeable laws are convex superpositions of symmetrized Dirac measures, the latter being the laws of urn sequences (as described further below).
See Kerns and Sz\'{e}kely (\cite{KS06}, top of p.600), where such a representation appears as an intermediate step in the proof of the signed-measure representation. The laws of urn sequences are known to be the extreme points of the convex set of finitely exchangeable laws, see \cite{KS06} for an elementary proof for finite state spaces and Kallenberg (\cite{Kal05} Proposition 1.8) for a general proof using advanced probability methods. Although not given in \cite{Kal05}, the Kerns-Sz\'{e}kely representation could be deduced from the statement of Proposition 1.8 via disintegration of measures. 

The present work builds upon this picture, which turns out to be very useful for the applications we have in mind. Thus we view finite exchangeable laws as convex mixtures of urns. But we re-instate the idea from original de Finetti, kept in the signed-measure approach, that the parameter space of the superposition should consist of probability measures on the {\it original} Polish space $X$, not its $N$-fold product. In principle, this is possible by parametrizing urn laws by their one-point marginals, which are easily seen to be in $1$-$1$ correspondence with these laws. In practice, to arrive at an explicit representation one needs an explicit formula for the inverse of this marginal map. By deriving such a formula, we obtain a unique representation of finitely exchangeable laws which sheds some light on their universal correlation structure and is useful for applications.

%

%- Important why? Advantages of us vs them? \textcolor{red}{SHORTEN THE FOLLOWING}
%\textcolor{black}{1) establishes interesting link of MMOT to polynomial convexification; 2) gives a prescription for sampling; 3) makes the convex geometry of the set transparent by indentifying the extreme points; 4) improves previous error analysis of finite vs infinite exchangeability as get a complete expansion identifying all the error terms; 5) our representation is, in case of the full finite sequence, unique; by contrast, see Example 2, Janson et al, for a counterexample for the signed-measure rep.}

{\bf Main results.} In terms of laws, $N$-extendibility turns into what has been called $N$-representability \cite{FMPCK13}: for $k\le N$, a $k$-point probability measure $\mu_k$ on $X^k$, or $k$-plan for short, is called $N$-representable if it is the $k$-point marginal of a symmetric $N$-point probability measure $\mu_N$ on $X^N$ (see Definition \ref{DefNrep}). 

As a first main result, we explicitly determine the extremal $N$-representable $k$-plans, that is, those that cannot be written as strict convex combinations of any other $N$-representable $k$-plans, and give a polynomial parametrization in terms of their one-point marginals. Focusing in this introduction for simplicity on the case $k=4$, these are the probability measures
\begin{align*}
   F_{N,4}(\lambda ) = & \; \frac{N^3}{(N\! -\! 1)(N\! -\! 2)(N\! -\! 3)} \left[ 
     \lambda^{\otimes 4} 
   - \frac{6}{N} S_4 \id^{\otimes 2}_{\#}\!\lambda \, \otimes \lambda^{\otimes 2}\right. \\
   & \left. \hspace*{2.5cm} + \frac{8 \, S_4\id^{\otimes 3}_\# \!\lambda \otimes \, \lambda 
         + 3\, S_4 \id^{\otimes 2}_\# \!\lambda \otimes \, \id^{\otimes 2}_\# \!\lambda}{N^2}
         - \frac{6}{N^3} \id^{\otimes 4}_\# \!\lambda 
    \right]
\end{align*}
where $\lambda$ is a $\tfrac{1}{N}$-quantized probability measure on $X$, i.e. an empirical measure of the form $\tfrac{1}{N}\sum_{i=1}^N\delta_{x_i}$ for some -- not necessarily distinct -- points $x_i\in X$. It is not obvious, but part of our result, that these measures are nonnegative, and different for different $\lambda$. 

The above expression can be viewed as a degree-4 symmetric polynomial in $\lambda$. Besides an overall positive prefactor, the polynomial has leading term $\lambda^{\otimes 4}$ which is homogeneous of degree $4$ and uncorrelated, and alternating corrections of order $\tfrac{1}{N^j}$ which are homogeneous of degree $4\! -\! j$ and more and more strongly correlated. As $N$ tends to infinity $F_{N,4}(\lambda)$ approaches the independent measure $\lambda^{\otimes 4}$, recovering the basis in the de Finetti representation for infinitely representable 4-plans implied by \eqref{intro:1}. The correlated corrections are of significant size even when $N$ is quite large; see Figure \ref{F:poly}. All these findings persist for general $k$; see Theorem \ref{T:expan} 
%in Section \ref{sec:univpoly} 
for the general expression $F_{N,k}(\lambda)$ for extremal $N$-representable $k$-plans. 
%polynomial in the four variables $\lambda$, $\id^{\otimes 2}_\# \! \lambda$, ..., $\id^{\otimes 4}_\# \! \lambda$, with multiplication given by the tensor product of measures. \textcolor{red}{Mention exponential decay and Ewens for coeff's}
Qualitatively, the corrections to independence form a finite exponentially decaying series; quantitatively the (rational) coefficients which appear can be related to the analytic continuation of the Ewens function from genetics, which we introduce for this purpose. 

\begin{figure}[http!]
\begin{center}
    \includegraphics[width=\textwidth]{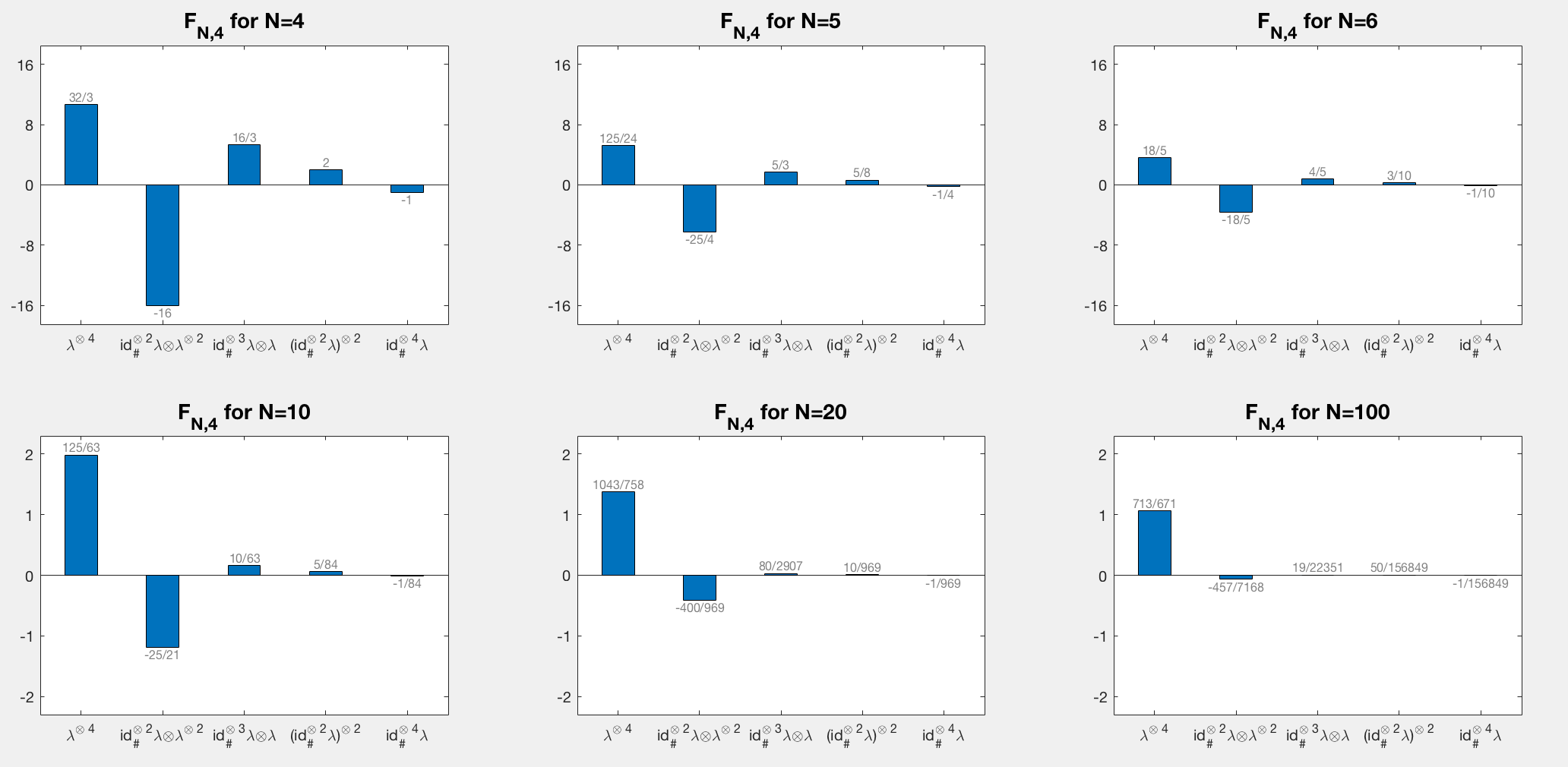}
    \caption{Coefficients of the universal polynomial $F_{N,4}$ for different $N$. For $N=5$ and $6$, the second (correlated) term is bigger respectively equal in absolute value to the first (independent) term; for $N=20$ its size is about 30$\%$ that of the first term. For large $N$, $F_{N,4}$ converges to the independent measure  $\lambda^{\otimes 4}$, but even for $N=100$ the deviation from the latter is still visible.}
\end{center}
\label{F:poly}
\end{figure}

Our second contribution is to cast the abstract insight  \cite{KS06, Kal05} that finite exchangeables are convex mixtures of urn sequences into a quantitative polynomial formula. We show that 
%, which would be an immediate consequence of our first result and the Krein-Milman theorem if $X$ is a finite set, %
any $N$-representable $k$-plan is a convex mixture of the $F_{N,k}(\lambda)$. More precisely, a $k$-plan is $N$-representable if and only if it is of the form
\begin{equation}\label{intro:2}
  \mu_k=\int_{\PP_{\frac{1}{N}}(X)} F_{N,k}(\lambda) \, \mbox{d} \alpha(\lambda)
\end{equation}
for some probability measure $\alpha$ on the set $\PP_{\frac{1}{N}}(X)$ of $\tfrac{1}{N}$-quantized probability measures on $X$. Moreover if $N=k$ the measure $\alpha$ is unique, giving a one-to-one parametrization of the laws of finitely exchangeable sequences. By contrast, the signed measure representation of such laws is not unique \cite{JKT16}, and not all signed measures give rise to such a law.

Formula \eqref{intro:2} generalizes de Finetti-Hewitt-Savage, \eqref{intro:1}, from infinitely to finitely representable measures, or from infinite to finite exchangeable sequences of random variables.  Formally, in the limit $N\to\infty$ the domain of integration in \eqref{intro:2} tends to all of $\PP(X)$, and the integrand tends to the independent measure $\lambda^{\otimes k}$, recovering de Finetti (see Section 6 for a rigorous account). In  Bayesian  language, formula \eqref{intro:2} says that the general $N$-extendible sequence $(Z_1,...,Z_k)$ of $X$-valued random variables is obtained by first picking some $\tfrac{1}{N}$-quantized distribution $\lambda$ on $X$ at random from some prior, then sampling $(Z_1,...,Z_k)$ from the correlated distribution $F_{N,k}(\lambda)$. In particular, by setting $k\! =\! N$ we conclude  that {\it the general finite exchangeable sequence $(Z_1,...,Z_N)$ is obtained by picking $\lambda$ at random from its -- in the case $k\! =\! N$ unique -- prior $\alpha$ in \eqref{intro:2}, then sampling $(Z_1,...,Z_N)$ from $F_{N,N}(\lambda)$}.   

Sampling from $F_{N,k}(\lambda)$ has a transparent probabilistic meaning which we now explain in the language of urns. Write a given $\tfrac{1}{N}$-quantized probability measure $\lambda$ as $\tfrac{1}{N}\sum_{i=1}^N \delta_{x_i}$ for $N$ not necessarily distinct points $x_1,...,x_N\in X$. Now pick, in turn, $k$ of these points at random {\it without replacement}, and denote the so-obtained sequence by $(Z_1,...,Z_k)$. By construction, the law of this sequence is the $k$-point marginal $\mu_k$ of the symmetrization of the Dirac measure $\delta_{(x_1,...,x_N)}$ on $X^N$. But the polynomial $F_{N,k}$ is precisely constructed as the inverse of the marginal map $\mu_k\mapsto\lambda$ (see eq.~\eqref{defmuk}, eq.~\eqref{lambdaempir}, and Theorem \ref{T:expan} below). Hence $F_{N,k}(\lambda)=\mu_k$, and so $(Z_1,...,Z_k)$ is the sought-after finite $N$-extendible sequence. We find it quite remarkable that the extremal $N$-representable $k$-plans  $F_{N,k}(\lambda)$ -- which emerge purely from convex geometric considerations -- have such a simple  probabilistic meaning, 
%being the laws of 
being the laws 
% associated with
of classical examples \cite{Aldous85} of finite exchangeable sequences which are not infinitely extendible.

%\textcolor{blue}{Not sure if the following is rigorous and stated correctly, nor where and how statements of this kind are proved in the infinite case, but it should intuitively be correct, and gives additional meaning -- which the signed-measure guys can't match -- to our newly introduced prior.} 

%\color{black}

{\bf Recovering the prior from sampling.}
A nice aspect of our representation of the law of a general finitely exchangeable sequence $(Z_1,...,Z_N)$ (eq.~\eqref{intro:2} with $k=N$) is that the prior $\alpha$, which is unique when $k=N$, can be determined by sampling, as follows. Let 
$$
  \bigl(Z_1^{(\nu)},...,Z_N^{(\nu)}\bigr)_{\nu=1}^n
$$
be a sequence of $n$ independent samples in $X^N$. Form the $\PP_{\frac{1}{N}}(X)$-valued sequence 
$$
  \lambda^{(\nu)} = \frac{1}{N}\sum_{i=1}^N\delta_{Z_i^{(\nu)}}.
$$
Then the empirical measure 
$$
  \frac{1}{n}\sum_{\nu=1}^n \delta_{\lambda^{(\nu)}}
$$
converges almost surely to $\alpha$. See Corollary \ref{C:sampling}.

 % end color

\smallskip

The paper is organized as follows. After introducing some notations and preliminaries in Section \ref{sec-prel}, Section \ref{sec-quanti} deals with $\frac{1}{N}$-quantized measures. In Section \ref{sec-rep}, we focus on the finite case; we first recall the results of \cite{GFDV} and then identify the universal correlated polynomials $F_{N,k}(\lambda)$. Section \ref{sec-cts} extends these findings to the case of a Polish state space $X$, and show in addition that the $F_{N,k}(\lambda)$ are in fact exposed $N$-representable $k$-plans. Section \ref{sec-hs} discusses connections with the Hewitt and Savage theorem and the  Diaconis-Freedman error bounds.  Section \ref{sec-ewens} gives an unexpected connection with the Ewens sampling formula from genetics. Finally, Section \ref{sec-opti} is devoted to applications to MMOT emphasizing the connection with convexification of polynomials. 
%}% end color

\section{Preliminaries and notations}\label{sec-prel}

In the sequel $X$ will denote a Polish (i.e., complete and separable metric) space. The principal example we have in mind is $X=\R^d$, in which case all of our results are already new and interesting. In this case the metric is the usual Euclidean metric
$d(x,y)=|x-y|=(\sum_{i=1}^d (x_i-y_i)^2))^{1/2}$.  
We denote by $\PP(X)$ the set of Borel probability measures on $X$. Probability measures on $X^k$ will be called $k$-plans. From now on, we fix two integers $k$ and $N$ with $1\le k \le N$. Given $\gamma \in \PP(X^N)$ we denote by $M_k\gamma$ the $k$-point-marginal of $\gamma$, i.e., 
\begin{equation} \label{Mk}
  (M_k\gamma )(A):=\gamma(A\times X^{N-k}) \mbox{ for every Borel subset $A$ of $X^k$}
\end{equation}
(with the convention $M_N\gamma = \gamma$). 

We denote by $C_b(X^N)$ the space of bounded and continuous functions on $X^N$, 
and by $\calS_N$ the group of permutations of $\{1, \ldots, N\}$. For $\gamma \in \PP(X^N)$ and $\sigma \in \calS_N$, the measure $\gamma^{\sigma} \in \PP(X^N)$ is defined by
\[\int_{X^N} \varphi \mbox{d} \gamma^{\sigma}=\int_{X^N} \varphi(x_{\sigma(1)}, \ldots, x_{\sigma(N)}) \mbox{d} \gamma(x_1, \ldots, x_N)\]
for every test-function $\varphi \in C_b(X^N)$. A measure $\gamma \in \PP(X^N)$ is called symmetric if $\gamma=\gamma^{\sigma}$ for every $\sigma \in \calS_N$. If $\gamma \in \PP(X^N)$ is arbitrary, its symmetrization $S_N \gamma$ is given by
\begin{equation}\label{SymNOp}
S_N \gamma :=\frac{1}{N!} \sum_{\sigma \in S_N} \gamma^{\sigma}.
\end{equation}
The symmetrization operator $S_N \, : \, \gamma \mapsto S_N\gamma$ is a linear projection operator on $\PP(X^N)$, i.e. it maps $\PP(X^N)$ linearly into itself and satisfies $(S_N)^2=S_N$; and $\gamma$ is symmetric if and only if $S_N\gamma=\gamma$. The set of symmetric $N$-plans is denoted by  $\PPs(X^N)$:
\begin{equation}\label{defsymplans}
\PPs(X^N):=\{\gamma \in \PP(X^N) \; : \; \gamma=S_N \gamma \}.
\end{equation}

We shall  use the notation $\#$ to denote the push-forward measure, that is, given two Polish spaces $Y$ and $Z$, a Borel map $T$ from $Y$ to $Z$ and a Borel probability measure  $\mu$ on $Y$, then $T_\# \mu$ is the probability measure on $Z$ defined by $T_\# \mu(B)=\mu(T^{-1}(B))$ for every Borel subset $B$ of $Z$;  equivalently, for every real-valued, continuous and bounded function $\varphi$ on $Z$:
\[\int_Z \varphi \;  \mbox{d} T_\# \mu=\int_Y \varphi \circ T \;  \mbox{ d} \mu.\]

We recall the following definition from \cite{FMPCK13}. 

\begin{defi} \label{DefNrep} For $N\in\N$ and $k\in\{1,...,N\}$, 
a $k$-plan $\mu_k\in \PP(X^k)$ is said to be $N$-representable if it is the $k$-point marginal of a symmetric $N$-plan, that is to say if there exists $\gamma \in \PPs(X^N)$ such that $\mu_k = M_k \gamma$. We denote by $\PPnr(X^k)$ the set of  $N$-representable $k$-plans, i.e.:
\[\PPnr(X^k)=\{ M_k S_N \tilde{\gamma}  \; : \; \tilde{\gamma} \in \PP(X^N)\} = \{M_k \gamma \; : \; \gamma\in\PPs(X^N)\}.\]
\end{defi}
%Thus a $k$-plan is $N$-representable if it is the $k$-point marginal of a symmetric $N$-plan.

In probabilistic terms,  a symmetric $N$-plan $\gamma\in \PPs(X^N)$ is the law of a finite exchangeable random sequence $(Z_1, \ldots, Z_N)$ with values in $X^N$, whereas $\mu_k=M_k \gamma\in\PPnr(X^k)$  is the law of its first $k$-components  $(Z_1, \ldots, Z_k)$.

We will work with the following standard notion of convergence in $\PP(X)$ (as well as $\PP(X^k)$, $\PP(X^N)$, ...). Recall that $C_b(X)$ denotes the space of bounded continuous functions on $X$. 

\begin{defi} \label{narrow} A sequence $(\mu_\nu)_{\nu\in \N}$ of probability measures in $\PP(X)$ is said to converge narrowly to $\mu\in\PP(X)$ if
$$
     \lim_{\nu\to\infty} \int_{X} \varphi \mbox{d} \mu_\nu  =  \int_{X} \varphi \mbox{d} \mu \; \mbox{ for all }\varphi\in C_b(X). 
$$ 
\end{defi}
Thus, in applications to statistical physics where the probability measures live on a space of particle configurations, narrow convergence corresponds to convergence of bounded continuous observables. 

We note the following basic topological property of the set of $N$-representable $k$-plans.
\begin{lem} \label{L:narrowclosed}
The set $\PPnr(X^k)$ is closed under narrow convergence.
\end{lem}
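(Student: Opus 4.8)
The plan is to combine Prokhorov's theorem with the symmetry of the representing $N$-plans. Start from a sequence $(\mu_k^{(\nu)})_\nu$ in $\PPnr(X^k)$ converging narrowly to some $\mu_k\in\PP(X^k)$, and for each $\nu$ choose $\gamma^{(\nu)}\in\PPs(X^N)$ with $M_k\gamma^{(\nu)}=\mu_k^{(\nu)}$. The aim is to extract a narrow limit $\gamma$ of (a subsequence of) the $\gamma^{(\nu)}$ and to check that $\gamma$ lies in $\PPs(X^N)$ and satisfies $M_k\gamma=\mu_k$, which exhibits $\mu_k$ as $N$-representable.

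The first and main step is tightness of $(\gamma^{(\nu)})_\nu$ in $\PP(X^N)$. Since $X^k$ is Polish and $(\mu_k^{(\nu)})_\nu$ converges narrowly, it is relatively compact, hence tight; pushing forward by the continuous projection onto the first coordinate shows that the one-point marginals $m^{(\nu)}:=M_1\mu_k^{(\nu)}=M_1\gamma^{(\nu)}$ are tight on $X$. So, given $\eta>0$, there is a compact $K\subset X$ with $m^{(\nu)}(X\setminus K)\le\eta$ for all $\nu$. Here the symmetry of $\gamma^{(\nu)}$ enters decisively: all of its one-point coordinate marginals coincide with $m^{(\nu)}$, and a union bound gives
$$
  \gamma^{(\nu)}\bigl(X^N\setminus K^N\bigr)\ \le\ \sum_{i=1}^N \gamma^{(\nu)}\bigl(\{x\in X^N:\ x_i\notin K\}\bigr)\ =\ N\, m^{(\nu)}(X\setminus K)\ \le\ N\eta .
$$
As $K^N$ is compact in $X^N$ and $\eta$ is arbitrary, $(\gamma^{(\nu)})_\nu$ is tight.

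By Prokhorov's theorem a subsequence $\gamma^{(\nu_j)}$ converges narrowly to some $\gamma\in\PP(X^N)$. Two routine closure arguments then finish the proof. For each $\sigma\in\calS_N$ the map $\rho\mapsto\rho^\sigma$ is narrowly continuous (being push-forward by a continuous bijection), so $\gamma^\sigma=\lim_j(\gamma^{(\nu_j)})^\sigma=\lim_j\gamma^{(\nu_j)}=\gamma$ by uniqueness of narrow limits; hence $\gamma\in\PPs(X^N)$. Similarly $M_k$ is push-forward by the continuous projection $X^N\to X^k$, hence narrowly continuous, so $M_k\gamma=\lim_j M_k\gamma^{(\nu_j)}=\lim_j\mu_k^{(\nu_j)}=\mu_k$. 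Therefore $\mu_k=M_k\gamma\in\PPnr(X^k)$, which is the claim.

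The one genuinely substantive point is the tightness step: tightness of the $k$-plans $\mu_k^{(\nu)}$ only controls the first $k$ coordinates of $\gamma^{(\nu)}$, and it is precisely symmetry that upgrades this to control of all $N$ coordinates via the union bound above. The remaining steps are standard consequences of Prokhorov's theorem and of the fact that push-forward along a continuous map preserves narrow convergence.
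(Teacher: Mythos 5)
Your proof is correct and follows essentially the same route as the paper: represent each $\mu_k^{(\nu)}$ as $M_k\gamma^{(\nu)}$ with $\gamma^{(\nu)}\in\PPs(X^N)$, use tightness plus Prokhorov to extract a narrow limit $\gamma$, and conclude by narrow continuity of $M_k$ and closedness of $\PPs(X^N)$. The only difference is that you spell out the tightness of $(\gamma^{(\nu)})$ via symmetry and a union bound, a step the paper asserts in a single line.
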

{\bf Proof} Let $\{\mu_\nu\}_{\nu\in\N}$ be a narrowly convergent sequence in $\PPnr(X^k)$. Write $\mu_\nu$ as $M_k\gamma_\nu$ for $\gamma_\nu\in\PPs(X^N)$. Since $\mu_\nu=M_k\gamma_\nu$ is narrowly convergent, it is tight and hence so is $\gamma_\nu$. By Prokhorov's theorem, $\gamma_\nu$ has a narrowly convergent subsequence which converges to some $\gamma\in\PPs(X^N)$. Since $M_k$ is narrowly continuous, $\mu_\nu$ converges to $M_k\gamma\in\PPnr(X^k)$.
\\[2mm]
% which is the same as  with the narrow topology, that is the coarsest  topology which makes $\gamma \mapsto  continuous for every $\varphi \in C_b(X^N)$. 
We recall that narrow convergence on $\PP(X)$ is metrizable. For instance one may
start from the metric $d$ on $X$, truncate it to the (topologically equivalent) bounded metric $\tilde{d}(x,y):=\min\{d(x,y),1\}$, and use the associated $1$-Wasserstein metric on $\PP(X)$ 
\begin{equation}\label{defW1}
W_1(\lambda_1,\lambda_2):=\inf_{ \theta\in \Pi(\lambda_1, \lambda_2)} \Big\{ \int_{X\times X} \tilde{d}(x,y) \mbox{ d} \theta(x,y) \Big\} \mbox{ for all } (\lambda_1, \lambda_2)\in \PP(X)^2,
\end{equation} 
 where $\Pi(\lambda_1,\lambda_2)$ is the set of transport plans between $\lambda_1$ and $\lambda_2$, i.e., the set of Borel probability measures on $X\times X$ having $\lambda_1$ and $\lambda_2$ as marginals. Then the (bounded) metric $W_1$ metrizes narrow convergence on $\PP(X)$ (that is, $\mu_\nu$ converges narrowly to $\mu$ if and only if $W_1(\mu_\nu,\mu)$ tends to zero) and $(\PP(X), W_1)$ is itself a Polish space. 
%In particular, closedness of sets under narrow convergence is the same as closedness in the narrow topology.

Also we recall the definition of the total variation distance between two signed measures $\mu$ and $\nu$ on $X$:
 \begin{equation}\label{defTV}
 \Vert \mu-\nu\Vert_{\TV}:=\sup \Big\{ \vert \mu(A)-\nu(A)\vert \; : \; A \mbox{ Borel subset of $X$}\Big\}.
 \end{equation}

\section{$\frac{1}{\rm N}$-quantized probability measures} \label{sec-quanti}

An important role will be played by the set of $\frac{1}{N}$-quantized probability measures on the Polish space $X$, 
\begin{equation} \label{quanti1}
   \PP_{\frac{1}{N}}(X) := \left\{\frac{1}{N} \sum_{i=1}^N \delta_{x_i} \, : \, x_1, \ldots, x_N \in X^N \mbox{ (not necessarily distinct)} \right\} .
\end{equation}
It is easy to see that this set can also be written as
\begin{equation} \label{quanti}
 \PP_{\frac{1}{N}}(X) =\left\{\lambda \in \PP(X) \; : \; \lambda(A) \in \left\{0,\frac{1}{N},...,1\right\}  \mbox{ for every  Borel subset $A$ of } X\right\}.
\end{equation}
In the special case of finite state spaces $X$, this set was introduced -- and utilized to parametrize extremal $N$-representable measures -- in \cite{GFDV}. Let us collect two basic properties of this set which hold for general state spaces.
\begin{lem} \label{L:closed} (Quadratic constraint characterization and closedness of $\frac{1}{N}$-quantized probability measures) \\
a) $\lambda\in\PP(X)$ belongs to $\PP_{\frac{1}{N}}(X)$ if and only if
\begin{equation}\label{caractquantized}
\lambda \otimes \lambda -\frac{2k+1}{N} \id^{\otimes 2}_\# \lambda \ge - \frac{k(k+1)}{N^2}  \; \mbox{, for } k=0, \ldots, N-1. 
\end{equation}
b) $\PP_{\frac{1}{N}}(X)$ is closed under narrow convergence.
\end{lem}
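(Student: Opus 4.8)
The plan is to prove part a) first, then deduce part b) from it.

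For part a), I would fix $\lambda\in\PP(X)$ and analyze the meaning of the inequality \eqref{caractquantized} pointwise. For a Borel set $A\subseteq X$, testing $\lambda\otimes\lambda$ against $\indi_{A\times A}$ and $\id^{\otimes 2}_\#\lambda$ against the same set gives $\lambda(A)^2$ and $\lambda(A)$ respectively, so \eqref{caractquantized} (applied to the diagonal-type test sets) forces the scalar inequality $t^2-\tfrac{2k+1}{N}t+\tfrac{k(k+1)}{N^2}\ge0$ for $t=\lambda(A)$ and every $k=0,\dots,N-1$; conversely, since $\id^{\otimes 2}_\#\lambda$ is supported on the diagonal, the measure inequality \eqref{caractquantized} is in fact equivalent to this family of scalar inequalities holding for every Borel $A$. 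I would then observe that $t^2-\tfrac{2k+1}{N}t+\tfrac{k(k+1)}{N^2}=(t-\tfrac{k}{N})(t-\tfrac{k+1}{N})$, so this quadratic is $\ge0$ exactly when $t\notin(\tfrac{k}{N},\tfrac{k+1}{N})$. Requiring this simultaneously for $k=0,\dots,N-1$ excludes every open interval between consecutive multiples of $\tfrac1N$ in $[0,1]$, which together with $0\le t\le 1$ leaves precisely $t\in\{0,\tfrac1N,\dots,1\}$. By the characterization \eqref{quanti} this is exactly the condition $\lambda\in\PP_{\frac1N}(X)$. The one technical point is making precise that the vector/measure-valued inequality \eqref{caractquantized} reduces to the scalar inequalities for all Borel $A$: this follows because the left-hand side of \eqref{caractquantized}, as a signed measure, is absolutely continuous with respect to the diagonal measure $\id^{\otimes 2}_\#\lambda$ plus $\lambda\otimes\lambda$, and testing on product sets $A\times A$ captures the constraint; I would spell out that a signed measure $\nu$ on $X\times X$ concentrated on (or split according to) the diagonal satisfies $\nu\ge c$ iff $\nu(B)\ge c\,(\lambda\otimes\lambda)(B)$-type bounds hold, reducing to $A\times A$.

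For part b), I would use part a) together with Lemma \ref{L:narrowclosed}-style continuity arguments: the maps $\lambda\mapsto\lambda\otimes\lambda$ and $\lambda\mapsto\id^{\otimes 2}_\#\lambda$ are narrowly continuous from $\PP(X)$ to $\PP(X\times X)$ (the first because narrow convergence is preserved under products, the second because $\id^{\otimes 2}$ is continuous and push-forward is narrowly continuous). Alternatively, and perhaps more cleanly, I would argue directly from \eqref{quanti1}: if $\lambda_\nu=\tfrac1N\sum_{i=1}^N\delta_{x_i^{(\nu)}}\to\lambda$ narrowly, then the sequence $(\lambda_\nu)$ is tight, hence the points $x_i^{(\nu)}$ stay in compact sets up to passing to a subsequence, so along a subsequence $x_i^{(\nu)}\to x_i$ for each $i$; then $\lambda_\nu\to\tfrac1N\sum_{i=1}^N\delta_{x_i}\in\PP_{\frac1N}(X)$, and by uniqueness of narrow limits $\lambda=\tfrac1N\sum_{i=1}^N\delta_{x_i}$. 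A subtlety here is extracting convergent subsequences of the atoms: tightness of $(\lambda_\nu)$ gives a compact $K$ with $\lambda_\nu(K)>1-\tfrac{1}{2N}$ for all $\nu$, which forces $\lambda_\nu(K)=1$ (values are multiples of $\tfrac1N$), i.e. all atoms lie in $K$; then a diagonal/Bolzano--Weierstrass extraction in $K^N$ finishes it.

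I expect the main obstacle to be the reduction in part a) from the measure inequality \eqref{caractquantized} to the scalar inequalities on all Borel sets $A$ — i.e. being careful about what ``$\ge -\tfrac{k(k+1)}{N^2}$'' means for a signed measure and why product sets suffice — rather than the elementary quadratic factorization, which is immediate. Part b) is then routine given either the scalar constraints of a) (passing to the limit in \eqref{caractquantized} using narrow continuity of the relevant quadratic maps and the fact that non-strict inequalities are preserved under narrow limits) or the direct atom-extraction argument.
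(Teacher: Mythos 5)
Your proof of the substantive implication of a) -- that \eqref{caractquantized} forces $\lambda\in\PP_{\frac{1}{N}}(X)$ -- is exactly the paper's: evaluate the left-hand side on $A\times A$, factor $f(t)=(t-\tfrac kN)(t-\tfrac{k+1}N)$, and conclude via \eqref{quanti}; this direction is insensitive to how one reads the measure inequality, since squares are always admissible test sets. The genuine problem is the step you yourself single out as the main obstacle, namely the claimed equivalence between the measure inequality \eqref{caractquantized} and the family of scalar inequalities $f(\lambda(A))\ge 0$ over all Borel $A$. If \eqref{caractquantized} is read as an inequality valid on \emph{every} Borel subset of $X^2$ (with constant right-hand side), this equivalence is false, and the ``only if'' part of a) would fail: take $N=2$, $\lambda=\tfrac12(\delta_x+\delta_y)$ with $x\neq y$, $k=1$, and $B=\{(x,x),(y,y)\}$; the left-hand side of \eqref{caractquantized} gives $B$ the mass $2\bigl(\tfrac14-\tfrac34\bigr)=-1<-\tfrac{k(k+1)}{N^2}=-\tfrac12$, even though $f(\lambda(A))\ge 0$ for every Borel $A$. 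The point is that each square $A\times A$ is rescued by its off-diagonal positive mass, and this compensation is lost on unions of several diagonal atoms, which are not product sets; so your proposed reduction (``splitting according to the diagonal, reducing to $A\times A$'') cannot be made to work, and your alternative reading ($\nu\ge c$ meaning $\nu(B)\ge c\,(\lambda\otimes\lambda)(B)$) would yield a different scalar inequality, not $f(\lambda(A))\ge 0$. The paper's own usage reveals the intended meaning: \eqref{caractquantized} is shorthand for the family of constraints obtained by testing on squares, i.e. $\lambda(A)^2-\tfrac{2k+1}{N}\lambda(A)+\tfrac{k(k+1)}{N^2}\ge 0$ for all Borel $A$; with that reading the converse direction is immediate from \eqref{quanti} and no reduction is needed, which is why the paper writes out only the nontrivial implication.

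For b), the paper deduces closedness from a) together with the narrow continuity of $\lambda\mapsto\lambda^{\otimes 2}$ and $\lambda\mapsto\id^{\otimes 2}_\#\lambda$; your first option is that same argument (and inherits its subtlety: the constraints in a) involve measures of arbitrary Borel sets, which do not converge set-by-set under narrow convergence, so a careful version should test against continuous functions or continuity sets). Your second, direct argument is correct, self-contained, and a genuinely different route: narrow convergence gives tightness, a compact $K$ with $\lambda_\nu(K)>1-\tfrac{1}{2N}$ forces $\lambda_\nu(K)=1$ because the values are multiples of $\tfrac1N$, hence all $N$ atoms lie in $K$; Bolzano--Weierstrass in $K^N$ and uniqueness of narrow limits then give $\lambda\in\PP_{\frac{1}{N}}(X)$ directly via \eqref{quanti1}. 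This bypasses a) altogether, and so is immune to the interpretational issue above; I would recommend it as the cleaner proof of b).
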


\vspace*{1mm}

\noindent
In formula \eqref{caractquantized} and in the sequel, $\lambda^{\otimes \ell}$ denotes the $\ell$-fold tensor product of $\lambda$ with itself and $\id^{\otimes \ell}_\# \lambda$ is defined by
\[\int_{X^{\ell}}  \varphi \; \mbox{d} \id^{\otimes \ell}_\# \lambda:=\int_X \varphi(x, \ldots, x) \mbox{d} \lambda(x), \; \mbox{ for all } \varphi\in C_b(X^\ell).\]
\\[2mm]
{\bf Proof} a): The nontrivial implication is that \eqref{caractquantized} implies that $\lambda\in\PP_{\frac{1}{N}}(X)$. Let $A$ be any Borel subset of $X$. Applying the measure on the left hand side of \eqref{caractquantized} to $A\times A$ gives $f(\lambda(A))\ge 0$, where $f$ is the scalar function $f(t) = t^2 - \frac{2k+1}{N}t + \frac{k(k+1)}{N^2}=(t-\frac{k}{N})(t-\frac{k+1}{N})$. But $f$ is negative precisely in the open interval $(\frac{k}{N},\frac{k+1}{N})$, whence $\lambda(A)$ does not lie in this interval. Since this holds for all $k=0,...,N-1$, it follows that $\lambda$ belongs to the set \eqref{quanti}. \\[1mm]
b): The maps $\lambda \in \PP(X) \mapsto \lambda^{\otimes 2}\in \PP(X^2)$ and $\lambda \in \PP(X) \mapsto  \id^{\otimes 2}_\# \lambda  \in \PP(X^2)$ are continuous with respect to narrow convergence, hence so is the left hand side of \eqref{caractquantized}. The assertion now follows from a). 

\section{Extremal $N$-representable $k$-plans on finite state spaces}\label{sec-rep} 
  
Throughout this section, we assume $N\ge 2$ and restrict our attention to a finite state space $X$ consisting of $\ell$ distinct points, 
  \begin{equation} \label{Xell}
X  = \{a_1, \ldots, a_\ell \}.  
  \end{equation}

\subsection{Extreme points}

Our goal is to describe the geometry of the convex set of $N$-representable $k$-plans on the finite state space $X$, i.e., $\PPnr(X^k)$. This set is a compact polyhedron in a finite-dimensional vector space and  therefore coincides, by Minkowski's theorem (see e.g. \cite{Ho94}), with the convex hull of its extreme points. Therefore, classifying the extreme points is one way to characterize the geometry of the object. We recall that a point $x$ in a convex set $K$ is an extreme point if, whenever $x=\alpha x_1 + (1-\alpha)x_2$ for some $x_1$, $x_2\in K$ and some $\alpha\in(0,1)$, we have that $x_1=x_2=x$. The set of extreme points of $K$ will be denoted $\ext{K}$. 

In \cite{GFDV} the extremal $N$-representable $k$-plans are determined in the case $k=2$ and $k=N$. They correspond exactly to the symmetrized Dirac measures respectively their two-point marginals:
\begin{thm} \label{theo1-extreme} {\rm \cite{GFDV}} {\rm a)} A measure $\mu$ on $X^N$ is an extreme point of $\PPs(X^N)$ if and only if it is of the form
\begin{equation} \label{k=N}
   S_N \delta_{a_{i_1},\ldots,a_{i_N}} \hspace{5mm} \mbox{ for some }  1 \leq i_1 \leq \ldots \leq i_N \leq \ell. 
\end{equation}
Moreover different index vectors $(i_1,...,i_N)$ with $1\le i_1\le \ldots\le i_N\le \ell$ yield different extreme points. \\[1mm]
b) A measure $\mu$ on $X^2$ is an extreme point of $\PPnr(X^2)$ if and only if it is of the form
\begin{equation} \label{k=2}
   M_2 S_N \delta_{a_{i_1},\ldots,a_{i_N}} \hspace{5mm} \mbox{ for some }  1 \leq i_1 \leq \ldots \leq i_N \leq \ell. 
\end{equation}
c) Moreover the marginal maps $M_2 \, : \, \ext(\PPs(X^N)) \to \ext(\PPnr(X^2))$ and 
$M_1 \, : \, \ext(\PPs(X^N)) \to \calP_{\frac{1}{N}}(X)$ 
are bijections. 
\end{thm}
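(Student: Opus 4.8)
The plan is to establish first that over a finite state space the convex set $\PPs(X^N)$ is a \emph{simplex} whose vertices are precisely the symmetrized Diracs \eqref{k=N}, and then to carry this structure through the linear marginal maps $M_1$ and $M_2$. For part (a), I would decompose an arbitrary $\gamma\in\PPs(X^N)$ over the $\calS_N$-orbits of the finite set $X^N$. Those orbits are in bijection with the size-$N$ multisets drawn from $\{a_1,\dots,a_\ell\}$, equivalently with the ordered index vectors $1\le i_1\le\dots\le i_N\le\ell$, and an orbit--stabilizer count applied to \eqref{SymNOp} identifies the uniform probability measure on the orbit through $(a_{i_1},\dots,a_{i_N})$ with $S_N\delta_{a_{i_1},\dots,a_{i_N}}$. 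Hence every symmetric $\gamma$ is a convex combination of the measures \eqref{k=N}, and, these having pairwise disjoint supports, they are affinely independent; so $\PPs(X^N)$ is the simplex they span, its extreme points are exactly the measures \eqref{k=N}, and distinct index vectors give distinct ones. This simultaneously settles the $M_1$ part of (c): by \eqref{quanti}, on a finite set the $\tfrac1N$-quantized measures are exactly the empirical measures $\tfrac1N\sum_{j=1}^N\delta_{a_{i_j}}$ of $N$-multisets, i.e.\ the images $M_1 S_N\delta_{a_{i_1},\dots,a_{i_N}}$, and since the multiset is read off from its empirical measure, $M_1$ is a bijection from $\ext(\PPs(X^N))$ onto $\PP_{\frac{1}{N}}(X)$.

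For part (b), one inclusion is easy: $M_2$ is linear and $\PPnr(X^2)=M_2\PPs(X^N)$ by Definition~\ref{DefNrep}, so by the above $\PPnr(X^2)=\conv\{M_2 S_N\delta_{a_{i_1},\dots,a_{i_N}}\}$, and since the vertices of a polytope's linear image are among the images of the polytope's vertices, every extreme point of $\PPnr(X^2)$ is of the form \eqref{k=2}. The step I expect to be the main obstacle is the converse: that \emph{each} $M_2 S_N\delta_{a_{i_1},\dots,a_{i_N}}$ is itself extreme in $\PPnr(X^2)$, because a priori the linear map $M_2$ could collapse a vertex of the simplex $\PPs(X^N)$ into the interior of its image. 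To overcome this I would use the explicit identity (obtained by the same orbit count), with $\lambda:=\tfrac1N\sum_{j=1}^N\delta_{a_{i_j}}$,
\[
 M_2 S_N\delta_{a_{i_1},\dots,a_{i_N}}=\frac{1}{N(N-1)}\sum_{p\ne q}\delta_{(a_{i_p},a_{i_q})}=\frac{N}{N-1}\Bigl(\lambda\otimes\lambda-\frac{1}{N}\,\id^{\otimes 2}_\#\lambda\Bigr)=:F_{N,2}(\lambda),
\]
together with two elementary features of $F_{N,2}$: its first-coordinate marginal equals $\lambda$, and its mass at a diagonal atom $(a,a)$ equals $\tfrac{N}{N-1}\bigl(\lambda(\{a\})^2-\tfrac1N\lambda(\{a\})\bigr)$.

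Given any representation of $F_{N,2}(\lambda_0)$ as a convex combination of $N$-representable $2$-plans, expanding each summand over the vertices of $\PPs(X^N)$ turns it into a convex combination $F_{N,2}(\lambda_0)=\sum_\lambda w_\lambda F_{N,2}(\lambda)$ over the finite set $\PP_{\frac{1}{N}}(X)$. Taking first marginals gives $\lambda_0(\{a\})=\sum_\lambda w_\lambda\,\lambda(\{a\})$ for every $a\in X$, while reading off the mass at $(a,a)$ and eliminating the linear term with the marginal identity gives $\lambda_0(\{a\})^2=\sum_\lambda w_\lambda\,\lambda(\{a\})^2$. By strict convexity of $t\mapsto t^2$, these two relations force $\lambda(\{a\})=\lambda_0(\{a\})$ for every $a$ and every $\lambda$ with $w_\lambda>0$, hence $\lambda=\lambda_0$ since $X$ is finite; thus the combination is trivial and $F_{N,2}(\lambda_0)$ is extreme, proving the converse inclusion in (b). (As an alternative to the Jensen step, one can check that the symmetric cost $c(x,y)=-\indi_{\{x=y\}}+\tfrac{N}{N-1}\bigl(\lambda_0(\{x\})+\lambda_0(\{y\})\bigr)$ makes $\mu_2\mapsto\int_{X^2}c\,\mbox{d}\mu_2$ strictly concave on the affine hull of $\PPnr(X^2)$ with unique maximizer $F_{N,2}(\lambda_0)$, showing it is even an exposed point.)

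For part (c) it remains to pin down the $M_2$-bijection. Combining the two inclusions in (b) shows $M_2$ maps $\ext(\PPs(X^N))$ onto $\ext(\PPnr(X^2))$, and it is injective there because the first-coordinate marginal of $M_2 S_N\delta_{a_{i_1},\dots,a_{i_N}}$ is $\tfrac1N\sum_{j=1}^N\delta_{a_{i_j}}$, which determines the index vector; hence $M_2:\ext(\PPs(X^N))\to\ext(\PPnr(X^2))$ is a bijection. Together with the bijection $M_1:\ext(\PPs(X^N))\to\PP_{\frac{1}{N}}(X)$ from the first paragraph (note $M_1$ factors as $M_2$ followed by the first-marginal map on $X^2$), this yields (c).
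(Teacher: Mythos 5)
Your argument is correct and complete. Note first that the paper itself does not prove Theorem \ref{theo1-extreme}: it is imported from \cite{GFDV}, with only the remark that the hard part is precisely the one you identified (that $M_2$ does not collapse any vertex of the simplex $\PPs(X^N)$ into a lower-dimensional part of its image). Your route to that hard step is a self-contained, finite-dimensional one: the orbit decomposition showing $\PPs(X^N)$ is a simplex with the symmetrized Diracs as vertices, then the explicit identity $M_2S_N\delta = F_{N,2}(\lambda)$, and finally the rigidity argument in which the one-point marginal fixes the first moments $\lambda(\{a\})$ while the diagonal masses fix the second moments $\lambda(\{a\})^2$, so equality in Jensen for $t\mapsto t^2$ forces every $\lambda$ in the support of the mixing weights to equal $\lambda_0$. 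This is in the same spirit as, but structurally simpler than, the mechanism the present paper uses later in Theorem \ref{extexp} for general Polish $X$ and general $k$, where the quadratic functional $J(\theta)=\tfrac12\int|u(x)-u(y)|^2\,\mbox{d}\theta\,\mbox{d}\theta$ (with $|u(x)-u(y)|^2$ vanishing on the diagonal and $u(y_1),\dots,u(y_\ell)$ linearly independent) plays the role of your diagonal-mass/second-moment comparison; your version buys elementarity and, as you note, exposedness for free on finite state spaces, while the paper's machinery buys the statement for arbitrary $k$ and continuous state spaces. Two small points: your parenthetical claim that the cost $c$ makes $\mu_2\mapsto\int c\,\mbox{d}\mu_2$ ``strictly concave'' is misphrased, since that functional is linear in $\mu_2$; what is strictly concave (and what your argument actually uses) is its restriction $\lambda\mapsto\int c\,\mbox{d}F_{N,2}(\lambda)$ to the vertex parametrization, which indeed has unique maximizer $\lambda_0$ and hence exposes $F_{N,2}(\lambda_0)$. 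Also, to pass from ``the collected weights $w_\lambda$ charge only $\lambda_0$'' to extremality you implicitly use $w_\lambda\ge t\,w^1_\lambda$ with $t\in(0,1)$, so that each $\mu^i$ itself equals $F_{N,2}(\lambda_0)$; this is fine but worth stating explicitly.
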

Here a) and the fact that the set of measures in \eqref{k=2} contains the set $\ext(\PPnr(X^2))$ of extremal $N$-representable two-plans is easy to see, but the reverse inclusion and the bijectivity of $M_2$ between extremal symmetric $N$-plans and extremal $N$-representable two-plans is nontrivial; geometrically it says that none of the corners of the high-dimensional polytope $\PPs(X^N)$ is mapped into the interior (or face interior or edge interior) of the low-dimensional polytope $\PPnr(X^2)$ by the highly non-injective marginal map $M_2$. Using this nontrivial fact it is easy to extend Theorem \ref{theo1-extreme} to an arbitrary choice of $k \in \{2,\ldots,N \}$.
\begin{thm} \label{theo-extremeabstract}
A measure $\mu$ on $X^k$ is an extreme point of $\PPnr(X^k)$ if and only if it is of the form 
\begin{equation} \label{kextremalabs}
 M_k S_N \delta_{a_{i_1}, \ldots, a_{i_N}} \hspace{5mm}\mbox{for some } 1 \leq i_1 \leq \ldots \leq i_N \leq \ell.
\end{equation}
Moreover the marginal map $M_k\, : \, \ext(\PPs(X^N)) \to \ext(\PPnr(X^k))$ is a bijection.
\end{thm}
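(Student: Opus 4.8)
The plan is to bootstrap everything from the $k=2$ (and $k=N$) case already contained in Theorem \ref{theo1-extreme}, by exploiting the fact that the two-point marginal map factors through the $k$-point marginal map. Throughout, recall that since $X$ is finite, $\PPs(X^N)$ is a compact convex polytope and $\PPnr(X^k)=M_k\bigl(\PPs(X^N)\bigr)$, being the linear image of a polytope, is again a polytope; hence by Minkowski's theorem each of them is the convex hull of its finitely many extreme points and every element is a finite convex combination of extreme points. Write $M_2^{(k)}\colon \PP(X^k)\to\PP(X^2)$ for the map sending a $k$-plan to its two-point marginal, so that $M_2=M_2^{(k)}\circ M_k$ on $\PP(X^N)$ and $M_2^{(k)}\bigl(\PPnr(X^k)\bigr)\subseteq\PPnr(X^2)$.

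First I would record a soft general fact: if $T\colon K\to L$ is an affine surjection between compact convex sets, then $\ext L\subseteq T(\ext K)$. Indeed, for $y\in\ext L$ the set $T^{-1}(y)\cap K$ is a nonempty face of $K$ (if a point of it splits as a nontrivial convex combination in $K$, applying $T$ and using extremality of $y$ shows both pieces stay in $T^{-1}(y)\cap K$), hence by Minkowski contains an extreme point of $K$, which is an extreme point of $K$. Applying this with $T=M_k\colon\PPs(X^N)\to\PPnr(X^k)$ (surjective by Definition \ref{DefNrep}) and invoking part a) of Theorem \ref{theo1-extreme}, which says $\ext\PPs(X^N)$ consists exactly of the symmetrized Diracs $S_N\delta_{a_{i_1},\ldots,a_{i_N}}$, one gets at once that every extreme point of $\PPnr(X^k)$ is of the form \eqref{kextremalabs}. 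This is the easy inclusion.

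The substantive step is the converse, that each $M_kS_N\delta_{a_{i_1},\ldots,a_{i_N}}$ is in fact extreme in $\PPnr(X^k)$; equivalently, that the only convex combination of extreme points of $\PPnr(X^k)$ equal to it is the trivial one. So suppose $M_kS_N\delta_{a_{i_1},\ldots,a_{i_N}}=\sum_j c_j\rho_j$ with $c_j>0$ and $\rho_j\in\ext\PPnr(X^k)$. By the easy inclusion just proved, each $\rho_j=M_kS_N\delta_{a_{i^{(j)}_1},\ldots,a_{i^{(j)}_N}}$. Applying $M_2^{(k)}$ gives $M_2S_N\delta_{a_{i_1},\ldots,a_{i_N}}=\sum_j c_j\,M_2S_N\delta_{a_{i^{(j)}_1},\ldots,a_{i^{(j)}_N}}$, where by part b) of Theorem \ref{theo1-extreme} both the left-hand side and every summand on the right are extreme points of $\PPnr(X^2)$. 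A convex combination of extreme points of a convex set which equals an extreme point must be trivial, so $M_2S_N\delta_{a_{i^{(j)}_1},\ldots,a_{i^{(j)}_N}}=M_2S_N\delta_{a_{i_1},\ldots,a_{i_N}}$ for all $j$, and then the injectivity of $M_2$ on $\ext\PPs(X^N)$ (part c) of Theorem \ref{theo1-extreme}) forces $S_N\delta_{a_{i^{(j)}_1},\ldots,a_{i^{(j)}_N}}=S_N\delta_{a_{i_1},\ldots,a_{i_N}}$, hence $\rho_j=M_kS_N\delta_{a_{i_1},\ldots,a_{i_N}}$ for every $j$. I expect this to be the only genuine obstacle, and it is entirely carried by the depth of Theorem \ref{theo1-extreme}: it is precisely the nontrivial fact that distinct corners of the large polytope $\PPs(X^N)$ remain distinct and extreme after passing to two-point marginals that lets one certify extremality ``at level $k$'' by testing ``at level $2$''.

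Finally, for the bijectivity of $M_k\colon\ext\PPs(X^N)\to\ext\PPnr(X^k)$: surjectivity onto $\ext\PPnr(X^k)$ is exactly the combination of the two inclusions above, while injectivity follows once more from $M_2=M_2^{(k)}\circ M_k$ and the injectivity of $M_2$ on $\ext\PPs(X^N)$ (if $M_k\gamma=M_k\gamma'$ for $\gamma,\gamma'\in\ext\PPs(X^N)$ then $M_2\gamma=M_2\gamma'$, so $\gamma=\gamma'$). In particular distinct ordered index vectors $1\le i_1\le\ldots\le i_N\le\ell$ give distinct measures \eqref{kextremalabs}, which completes the proof.
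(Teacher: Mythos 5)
Your proof is correct, and while it shares the paper's overall strategy of reducing to the known two-marginal case through the factorization $M_2=M_2^{(k)}\circ M_k$ and Theorem \ref{theo1-extreme}, the mechanism for the hard inclusion is genuinely different. The paper argues by counting: after the easy inclusion $\ext(\PPnr(X^k))\subseteq M_k\,\ext(\PPs(X^N))$, it uses $\ext(M_2\PPs(X^N))\subseteq M_2\,\ext(\PPnr(X^k))$ together with the bijectivity of $M_2$ to get $|\ext(\PPnr(X^k))|\ge|\ext(\PPs(X^N))|\ge|\{M_kS_N\delta_{a_{i_1},\ldots,a_{i_N}}\}|$, which forces the inclusion to be an equality, with the bijectivity of $M_k$ emerging from the same cardinality bookkeeping. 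You instead certify extremality of each $M_kS_N\delta_{a_{i_1},\ldots,a_{i_N}}$ directly: write it, via Minkowski, as a convex combination of extreme points of $\PPnr(X^k)$, identify these as $M_k$-images of symmetrized Diracs by the easy inclusion, push the identity down to two-point marginals, and invoke extremality in $\PPnr(X^2)$ plus injectivity of $M_2$ on $\ext(\PPs(X^N))$ (Theorem \ref{theo1-extreme} b), c)) to force the combination to be trivial; the injectivity half of the bijection then falls out of the factorization in one line. Both routes rest on exactly the same nontrivial input and on the finite polytope structure, but the paper's count is shorter, whereas your argument is more transparent about where extremality comes from and avoids comparing cardinalities altogether. (Minor slip, not a gap: in your proof of the soft fact, the face $T^{-1}(y)\cap K$ contains an extreme point of \emph{itself}, which is then an extreme point of $K$; you wrote ``extreme point of $K$'' twice, but the intended standard argument is clear.)
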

\begin{proof} We will abbreviate $\PPs(X^N)=\PPs$, $(i_1,\ldots,i_N)=i$, $\{(i_1,\ldots,i_N)\, : \, 1\le i_1\le\ldots\le i_N\le\ell\}=\calI$. By the definition of $N$-representability, $\PPnr(X^k) = M_k\PPs$. Using, in order of appearance, this fact, the linearity of $M_k$, and Theorem \ref{theo1-extreme} a), we have
\begin{equation} \label{1stinclu}
  \ext(\PPnr(X^k)) = \ext(M_k\PPs) \subseteq M_k \ext(\PPs) = \{M_kS_N\delta_{a_{i_1}, \ldots, a_{i_N}}\, : \, i\in\calI \}.
\end{equation}
To establish the reverse inclusion it suffices to show that the number of elements of the set on the left is bigger or equal that of the set on the right. By the fact that $M_2=M_2M_k$ and the linearity of $M_2$,
$$
     \ext(M_2\PPs) = \ext( M_2 M_k \PPs) \subseteq M_2 \ext(M_k\PPs) = M_2 \ext(\PPnr(X^k))
$$
and consequently $|\ext (M_2\PPs)| \le |\ext(\PPnr(X^k))|$, where $| \cdot |$ denotes the number of elements of a set. Combining this inequality with the bijectivity property of $M_2$ in Theorem \ref{theo1-extreme} b) and Theorem \ref{theo1-extreme} a) yields
\begin{eqnarray*}
  \hspace*{-2mm} |\ext (\PPnr(X^k))| &\ge & |\ext (M_2\PPs)| = |\ext(\PPs)| = |\{S_N\delta_{a_{i_1}, \ldots, a_{i_N}}\!\! : \, i\in\calI\}| \\
  & \ge &
|\{M_kS_N\delta_{a_{i_1}, \ldots, a_{i_N}}\!\! : \, i\in\calI \}|.   
\end{eqnarray*}   
\end{proof}
In \cite{GFDV} it was established that for the set \eqref{Xell} consisting of $\ell$ distinct points, the cardinality of $\calP_{\frac{1}{N}}(X)$ -- and hence, by Theorem \ref{theo1-extreme} b), the number of extreme points of $\PPs(X^N)$ -- equals ${N+\ell-1}\choose{\ell-1}$. Now the following corollary is an immediate consequence of Theorem \ref{theo-extremeabstract}. 
\begin{coro}
 For any $k \in \{2,\ldots,N\}$, $\PPnr(X^k)$ has ${N+\ell-1}\choose{\ell-1}$ extreme points. 
\end{coro}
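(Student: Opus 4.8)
The plan is to chain together the bijections proved just above. By Theorem~\ref{theo-extremeabstract}, the marginal map $M_k$ restricts to a bijection $\ext(\PPs(X^N)) \to \ext(\PPnr(X^k))$, so in particular the two sets are equinumerous and it suffices to count $\ext(\PPs(X^N))$.

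Next, by Theorem~\ref{theo1-extreme}~a), the extreme points of $\PPs(X^N)$ are precisely the symmetrized Diracs $S_N\delta_{a_{i_1},\ldots,a_{i_N}}$ indexed by weakly increasing vectors $1\le i_1\le\cdots\le i_N\le\ell$, and distinct such vectors give distinct extreme points. Hence $|\ext(\PPs(X^N))|$ equals the number of such index vectors --- equivalently, via the bijection $M_1$ of Theorem~\ref{theo1-extreme}~c), the cardinality of $\calP_{\frac{1}{N}}(X)$, which is the number of multisets of cardinality $N$ drawn from the $\ell$-point set $\{a_1,\ldots,a_\ell\}$. A $\frac{1}{N}$-quantized measure $\lambda$ is completely determined by the $\ell$-tuple of multiplicities $\bigl(N\lambda(\{a_j\})\bigr)_{j=1}^\ell$ of nonnegative integers summing to $N$, and conversely every such tuple arises; a standard stars-and-bars count then gives $|\calP_{\frac{1}{N}}(X)| = \binom{N+\ell-1}{\ell-1}$, as already recorded in \cite{GFDV}.

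Combining these yields $|\ext(\PPnr(X^k))| = \binom{N+\ell-1}{\ell-1}$. There is essentially no obstacle: all the substantive work --- the bijectivity of $M_k$ on extreme points (Theorem~\ref{theo-extremeabstract}), the classification of $\ext(\PPs(X^N))$ (Theorem~\ref{theo1-extreme}~a)), and the count of $\calP_{\frac{1}{N}}(X)$ --- is already in hand, so the corollary is a matter of bookkeeping. The only point needing a line of care is the elementary combinatorial identification of $\calP_{\frac{1}{N}}(X)$ with the set of $N$-element multisets on an $\ell$-point set, which is immediate once one notes that on a finite state space a probability measure is just the vector of its atom masses.
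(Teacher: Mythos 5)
Your proof is correct and follows essentially the same route as the paper: the paper likewise counts $\ext(\PPnr(X^k))$ by chaining the bijection $M_k$ of Theorem~\ref{theo-extremeabstract} with the classification/bijections of Theorem~\ref{theo1-extreme} and the count $|\calP_{\frac{1}{N}}(X)|={N+\ell-1\choose \ell-1}$ (which it simply cites from \cite{GFDV}, whereas you re-derive it by stars and bars). No gaps.
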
 
 
Combining the isomorphisms $M_k$ and $M_1$ from Theorems \ref{theo-extremeabstract} respectively \ref{theo1-extreme} shows that the extreme points of $\PPnr(X^k)$, i.e. the $k$-plans of form \eqref{kextremalabs}, can be uniquely recovered from their one-point marginals $\tfrac{1}{N}(\delta_{a_{i_1}}+\ldots+\delta_{a_{i_N}})\in\PP_{\frac{1}{N}}(X)$. But the above abstract reasoning does not provide a convenient formula for the recovery map. This issue is dealt with in the next section.  

\subsection{A universal polynomial formula for extreme points in terms of their one-point marginals} \label{sec:univpoly}
 
Our aim now is to derive an explicit polynomial formula for the extremal measures \eqref{kextremalabs} in terms of their one-point marginals. In order to do so we consider any extremal $N$-representable $k$-plan
 \begin{equation} \label{defmuk}
 \mu_k:= M_k S_N \delta_{x_1, \ldots, x_N}
 \end{equation} 
for $(x_1, \ldots, x_N) \in X^N$.

In \cite{GFDV} it was shown that, in the case of $k=2$, $\mu_2$ can be expressed explicitly as 
 \begin{equation}\label{mu2}
 \mu_2 =\frac{N}{N-1} \lambda^{\otimes 2} -\frac{1}{N-1} \id^{\otimes 2}_\# \lambda,
 \end{equation} 
 where 
 \begin{equation}\label{lambdaempir}
  \lambda := M_1\mu_k =\frac{1}{N} \sum_{i=1}^N \delta_{x_i} \in \PP_{\frac{1}{N}}(X)
 \end{equation} 
 is the one-point marginal of $\mu_k$. 
 (Recall the notation $\lambda^{\otimes\ell}$ and $\id^{\otimes\ell}_\#\lambda$ for the $\ell$-fold tensor product of $\lambda$ with itself respectively the push-forward of $\lambda$ under the $\ell$-fold cartesian product of the identity; see the end of Section 2.)
 A similar computation for $k=3$ gives 
\begin{equation}\label{mu3}
\mu_3=\frac{N^2}{(N-1)(N-2)} \left[\lambda^{\otimes 3} -\frac{3}{N} S_3\Big((\id^{\otimes 2}_\# \lambda) \otimes \lambda\Big)+\frac{2}{N^2} \id^{\otimes 3}_\# \lambda\right].
\end{equation}
(For a justification of \eqref{mu3} using our general results see the examples below Theorem \ref{T:expan}.)  In view of \eqref{mu2} and \eqref{mu3}, it is natural to look for a similar \textit{polynomial} of degree $k$ in $\lambda$ expression of $\mu_k$, consisting of a mean field term $\lambda^{\otimes k}$ and corrections of order $\tfrac{1}{N^j}$ for $j=1,\ldots,k\! -\! 1$. 
As turns out, the $j^{th}$ order correction is related to the partitions of the number $j$. 
\begin{defi}
Let $\N=\{1,2,3,...\}$ denote the set of positive integers. A partition of $j\in\N$ of length $n\in\N$ is a vector $\bfp=(p_1,...,p_n)\in\N^n$ such that $\sum_{i=1}^n p_i=j$, $p_1\ge \ldots \ge p_n$. For any partition $\bfp$ we denote its length by $n(\bfp)$. 
% The set of all partitions of $j$ will be denoted $\parti_j$. 
\end{defi}
\noindent
For example, the partitions of $4$ are 
\begin{align*}
      &1+1+1+1 \\
      &2+1+1 \\
      &2+2 \\
      &3+1 \\
      &4.
\end{align*} 
This corresponds in the above notation to $\bfp=(1,1,1,1)\in \N^4$, $\bfp=(2,1,1)\in\N^3$, $\bfp=(2,2)\in \N^2$, $\bfp=(3,1)\in\N^2$, and $\bfp=4\in\N$. 
\begin{thm} \label{T:expan} Let $N\ge 2$, $k\in\{2,\ldots,N\}$. Any extremal $N$-representable $k$-plan $\mu_k$ (see \eqref{defmuk}) can be written in terms of its one-point marginal $\lambda$ (see \eqref{lambdaempir}) as 
\begin{equation} \label{expan}
   \mu_k = \frac{N^{k-1}}{ \prod_{i=1}^{k-1}(N-i)} \left[ \lambda^{\otimes k} + \sum_{j=1}^{k-1}
            \frac{(-1)^j}{N^j} S_k P_j^{(k)}(\lambda ) \right ] =: F_{N,k}(\lambda )
\end{equation}
where for $j=1,...,k-1$
\begin{equation} \label{monomial}
   P_j^{(k)}(\lambda )= \!\!\! \sum_{\substack{\bfp=(p_1,...p_{n(\bfp)}) \mbox{\scriptsize partition} \\ \mbox{\scriptsize of }j\mbox{\scriptsize with }j+n(\bfp)\le k}} \!\!\!
     d^{(k)}_\bfp \, \id^{\otimes(p_1+1)}_\# \!\! \lambda \;
      \otimes \ldots \otimes \id^{\otimes(p_{n(\bfp)}+1)}_\# \!\! \lambda \; 
      \otimes \lambda^{\otimes(k-j-n(\bfp))}
\end{equation}
with positive coefficients $d_\bfp^{(k)}$ given by
\begin{equation} \label{coeffP}
   d^{(k)}_\bfp = \frac{k!}{(k-j-n(\bfp))!} \; \prod_{i=1}^{n(\bfp)} \frac{1}{p_i+1} \; 
   \prod_{q\in\mbox{\scriptsize Ran}\, \bfp} \frac{1}{( | \bfp^{-1}(q)|)! } \;\;\; .  
\end{equation}
Moreover the coefficients satisfy the sum rule
\begin{equation}\label{coeff}
     \sum_{\substack{\bfp\, \mbox{\scriptsize partition of }j \\ \mbox{\scriptsize with }j+n(\bfp)\le k}} \!\!\!
     d^{(k)}_\bfp \; = \; 
     \!\!\! \sum_{1\le i_1<\ldots <i_j\le k-1}\!\!\! i_1\!\cdot\! ... \!\cdot \! i_j \;\; =: c_j^{(k)} \;\;\; (j=1,\ldots ,k-1).
\end{equation}
In particular, $||P_j^{(k)}(\lambda)||_{TV} = \int_{X^k} dP_j^{(k)}(\lambda) = c_j^{(k)}$.
\end{thm}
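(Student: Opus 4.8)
The plan is to compute $\int_{X^k}\varphi\,d\mu_k$ against test functions directly from \eqref{defmuk} and to reorganise the result according to the pattern of coincidences among the sampled indices. Since $\mu_k=M_kS_N\delta_{x_1,\dots,x_N}$ is a symmetric measure, and symmetric bounded continuous functions determine symmetric measures (for general $\psi$ one has $\int\psi\,d\nu=\int(S_k\psi)\,d\nu$ when $\nu$ is symmetric), it suffices to verify \eqref{expan} tested against symmetric $\varphi\in C_b(X^k)$; fix such a $\varphi$. Writing $\lambda=\tfrac1N\sum_{i=1}^N\delta_{x_i}$ and expanding the symmetrisation \eqref{SymNOp} in \eqref{defmuk} gives the ``sampling without replacement'' identity
\[
  N(N\!-\!1)\cdots(N\!-\!k\!+\!1)\int_{X^k}\varphi\,d\mu_k \;=\; \sum_{\substack{(i_1,\dots,i_k)\in\{1,\dots,N\}^k\\ i_1,\dots,i_k\ \text{pairwise distinct}}}\varphi(x_{i_1},\dots,x_{i_k}).
\]
I would then expand the right-hand side by inclusion--exclusion (Möbius inversion) over the lattice $\Pi_k$ of set partitions of $\{1,\dots,k\}$: for $\sigma\in\Pi_k$ let $A_\sigma$ denote the sum of $\varphi(x_{i_1},\dots,x_{i_k})$ over all tuples whose index is constant on each block of $\sigma$. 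Then $A_{\hat 0}=\sum_{\text{all tuples}}\varphi=N^k\int\varphi\,d\lambda^{\otimes k}$ at the all-singletons partition $\hat 0$, while the distinct-tuple sum above equals $\sum_{\sigma\in\Pi_k}\bigl(\prod_{b\in\sigma}(-1)^{|b|-1}(|b|-1)!\bigr)A_\sigma$ by the classical formula for the Möbius function of $\Pi_k$.

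The next step identifies each $A_\sigma$ with a diagonal-tensor integral. If $\sigma$ has $m$ blocks $b_1,\dots,b_m$, then summing over index functions from the blocks to $\{1,\dots,N\}$ gives $A_\sigma=N^m\int_{X^k}\varphi\,d\nu_\sigma$, where $\nu_\sigma$ is the push-forward of $\lambda^{\otimes m}$ under the map that copies the $r$-th coordinate into the positions of block $b_r$; for symmetric $\varphi$ this equals $\int\varphi\,d\,S_k\bigl(\id^{\otimes|b_1|}_\#\lambda\otimes\cdots\otimes\id^{\otimes|b_m|}_\#\lambda\bigr)$ and depends on $\sigma$ only through the multiset of block sizes. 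Grouping set partitions by that multiset --- recording the blocks of size $\ge 2$ as $p_1+1\ge\cdots\ge p_n+1$, so that $\bfp=(p_1,\dots,p_n)$ is a partition of $j:=k-m$ with $j+n\le k$, and the remaining $k-j-n$ blocks as singletons --- the number of set partitions of the resulting type is $k!/\bigl[(k-j-n)!\,\prod_i(p_i+1)!\,\prod_{q\in\mathrm{Ran}\,\bfp}(|\bfp^{-1}(q)|)!\bigr]$, and each carries the common signed weight $(-1)^j\prod_ip_i!$; their product is $(-1)^jd^{(k)}_\bfp$, with $d^{(k)}_\bfp$ exactly \eqref{coeffP} (the $\prod_i(p_i+1)!$ in the count cancels $\prod_ip_i!$ from the weight, leaving $\prod_i(p_i+1)$ in the denominator). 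Summing over types and over $j$ (the $j=0$ term contributing $\lambda^{\otimes k}$), dividing by $N(N-1)\cdots(N-k+1)=N\prod_{i=1}^{k-1}(N-i)$, and invoking once more that symmetric test functions determine symmetric measures, this produces \eqref{expan}--\eqref{coeffP}.

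For the sum rule I would test the now-established identity \eqref{expan} against $\varphi\equiv 1$: every measure $\id^{\otimes(p_i+1)}_\#\lambda$, $\lambda$ and $\mu_k$ has total mass $1$ and $S_k$ preserves total mass, so $\int_{X^k}dP_j^{(k)}(\lambda)=\sum_\bfp d^{(k)}_\bfp$, and \eqref{expan} at $\varphi\equiv1$ becomes $\prod_{i=1}^{k-1}(1-\tfrac{i}{N})=1+\sum_{j=1}^{k-1}\tfrac{(-1)^j}{N^j}\sum_\bfp d^{(k)}_\bfp$. Comparing with the binomial expansion $\prod_{i=1}^{k-1}(1-\tfrac{i}{N})=\sum_{j=0}^{k-1}\tfrac{(-1)^j}{N^j}\,c_j^{(k)}$ (here $c_j^{(k)}$ is the $j$-th elementary symmetric polynomial in $1,\dots,k-1$) and matching powers of $\tfrac1N$ --- a polynomial identity in $\tfrac1N$ valid for all $N\ge k$ --- gives $\sum_\bfp d^{(k)}_\bfp=c_j^{(k)}$; equivalently this is the unsigned Stirling identity $\sum_{\sigma:\ \sigma\ \text{has}\ k-j\ \text{blocks}}\prod_{b\in\sigma}(|b|-1)!=c(k,k-j)$, the coefficient of $x^{k-j}$ in $x(x+1)\cdots(x+k-1)$. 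Since each $d^{(k)}_\bfp$ is manifestly positive and the summands of $P_j^{(k)}(\lambda)$ are positive measures, $P_j^{(k)}(\lambda)\ge 0$, so its total variation equals its total mass, giving the final assertion $||P_j^{(k)}(\lambda)||_{TV}=\int_{X^k}dP_j^{(k)}(\lambda)=c_j^{(k)}$.

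The main obstacle is the combinatorial bookkeeping in the middle step: matching concrete set partitions of $\{1,\dots,k\}$ and their Möbius weights against the integer partitions $\bfp$ of $j$ and against the symmetrised diagonal tensors they generate, and checking that the accumulated multiplicities are exactly the coefficients \eqref{coeffP} with no leftover factorials. The other ingredients --- reduction to symmetric test functions, Möbius inversion on $\Pi_k$, and the push-forward identities --- are routine; it is keeping the normalisations straight that requires care.
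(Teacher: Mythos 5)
Your proposal is correct, and its overall skeleton coincides with the paper's: your sampling-without-replacement identity is exactly Lemma \ref{L:elid}, your signed expansion over set partitions with weights $(-1)^{k-n(\calP)}\prod_{P\in\calP}(|P|-1)!$ is exactly Proposition \ref{repfinite}, and your regrouping by block-size multisets, the multiplicity count, and the total-mass argument with matching of powers of $\tfrac1N$ for \eqref{coeff} reproduce Proposition \ref{P:almostewens}, the reduction \eqref{muksplitorder}--\eqref{pp'2}, and the Vieta argument at the end of the paper's proof. The one genuine difference is how you reach the set-partition expansion: the paper proves it by induction on $k$ via the recursion of Lemma \ref{L:recur} (with the ``creation operator'' bookkeeping in the proof of Proposition \ref{repfinite}), whereas you obtain it in one stroke by M\"obius inversion on the lattice of set partitions of $\{1,\dots,k\}$, using the classical formula $\mu(\hat 0,\sigma)=\prod_{b\in\sigma}(-1)^{|b|-1}(|b|-1)!$ for its M\"obius function; your reduction to symmetric test functions is legitimate since both sides of \eqref{expan} are symmetric measures. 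Your route is shorter and makes the origin of the factors $\beta_{\calP}$ transparent, at the price of importing the M\"obius function of the partition lattice (or an equivalent inclusion--exclusion computation), while the paper's route is elementary and self-contained and yields the recursion \eqref{recurmu} as a by-product. Your bookkeeping also checks out: the number of set partitions of the given type, $k!/\bigl[(k-j-n)!\,\prod_i(p_i+1)!\,\prod_{q}(|\bfp^{-1}(q)|)!\bigr]$, multiplied by the common weight $(-1)^j\prod_i p_i!$, is exactly $(-1)^j d^{(k)}_\bfp$ as in \eqref{coeffP}, and the positivity of the $d^{(k)}_\bfp$ justifies the final identification of the total variation with the total mass.
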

 
In the last term in eq.~\eqref{coeffP}, a partition $\bfp$ is viewed as a map from the set of its component indices to $\N$; the range $\mbox{Ran}\, \bfp$ of this map is the set of values taken by the components, and $| \bfp^{-1}(q)|$ denotes the number of components with value $q$. For example, for $\bfp=(3,1,1)$ and $q=1$, $| \bfp^{-1}(q)|=2$. The factor  $(| \bfp^{-1}(q)|)!$ in the denominator says that a partition with many repeat components contributes much less than a partition with few repeat components.  

Some remarks are in order.

1) The first term in expression \eqref{expan} for the extreme points is a mean field term and the remaining terms are correlation corrections. We emphasize that the $P_j^{(k)}$ are independent of $N$ and hence the correlation corrections form a {\it finite series} in inverse powers of $N$.

2) The $P_j^{(k)}$ are polynomials of degree $k-j$ in $\lambda$. 

3) Due to the presence of the signs $(-1)^j$, it is far from trivial that $F_{N,k}(\lambda)$ is a nonnegative measure; but it must be, e.g. because the left hand side of \eqref{expan} equals \eqref{defmuk}. Nonnegativity relies on a subtle interplay between the explicit coefficients in Theorem \ref{T:expan} and the quantization condition $\lambda\in\calP_{\frac{1}{N}}(X)$, and does not hold for arbitrary $\lambda\in\calP(X)$. 

4) The coefficients $c_j^{(k)}=\sum i_1\cdot \ldots \cdot i_j$ introduced in \eqref{coeff} which measure the total mass of the $j^{th}$-order correction to independence are related to the well-known Stirling numbers, particularly the (absolute) Stirling numbers of the first kind. For given natural numbers $q,r \in \mathbb{N}\cup\{ 0 \}$ with $r\leq q$ the corresponding absolute Stirling number of the first kind $s(q,r)$ gives the number of permutations of $\{1, \ldots, q\}$ that decompose into $r$ cycles, with the convention that $s(q,r)$ is zero when exactly one of $q$ and $r$ is zero and that $s(0,0)=1$. 
%These numbers can be computed via an explicit expression that is of the same nature as the 'sum over integer products' in \eqref{coeff}. Exploiting this connection 
From well-known expressions for Stirling numbers one can see that the following holds
\begin{equation*}
c_j^{(k)}= s(k,k-j),
\end{equation*}
that is to say the present coefficients $c_j^{(k)}$ equal the number of permutations of $\{1,\ldots,k\}$ that decompose into $k-j$ cycles. For more information about Stirling numbers 
%
%Note that the number of cycles exactly corresponds to the degree of the polynomial $P_j^{(k)}$. For a detailed discussion of Stirling numbers including the aforementioned explicit expression that allowed us to draw a comparison with the present coefficients $c_j^{(k)}$ 
we refer the interested reader to \cite{Co74}. 
%Therein one also finds a recurrence relation between the $s(q,r)$ that makes an efficient evaluation of the $c_j^{(k)}$ possible. Since we are not going to use said recurrence relation in the sequel, we omit the details.
%
%5) The quantity $\frac{c_j^{(k)}}{N^j}$ gives the total mass of the $j$-th order correction to independence. 

5) By combining Theorem \ref{T:expan}, Theorem \ref{theo-extremeabstract}, and the isomorphism property of $M_1$ from Theorem \ref{theo1-extreme}, we immediately obtain: 

\begin{coro} \label{C:extfinite} A measure $\mu_k$ on $X^k$ is an extreme point of $\PPnr(X^k)$ if and only if it is of the form $\mu_k=F_{N,k}(\lambda)$ for some $\lambda\in\calP_{\frac{1}{N}}(X)$, with $F_{N,k}$ given by \eqref{expan}--\eqref{coeffP}. Moreover $F_{N,k}(\lambda)$ has one-point marginal $\lambda$, and $j$-point marginal $F_{N,j}(\lambda)$ for $2\le j\le k-1$.
\end{coro}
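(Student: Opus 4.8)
The plan is to establish the identity \eqref{expan} directly, by testing both sides against an arbitrary $\varphi\in C_b(X^k)$ and turning ``sampling without replacement'' into ``sampling with replacement plus diagonal corrections'' by an inclusion--exclusion over the partition lattice $\Pi_k$ of $\{1,\dots,k\}$. The starting point is that $\mu_k=M_kS_N\delta_{x_1,\dots,x_N}$ is exactly the law of an ordered size-$k$ sample drawn \emph{without} replacement from the urn $\{x_1,\dots,x_N\}$, so that
$$\int_{X^k}\varphi\,d\mu_k=\frac{(N-k)!}{N!}\sum_{\substack{(i_1,\dots,i_k)\in\{1,\dots,N\}^k\\ i_1,\dots,i_k\ \text{pairwise distinct}}}\varphi(x_{i_1},\dots,x_{i_k}),$$
whereas $\int\varphi\,d\lambda^{\otimes k}=N^{-k}\sum_{(i_1,\dots,i_k)\in\{1,\dots,N\}^k}\varphi(x_{i_1},\dots,x_{i_k})$ and, more generally, for a set partition $\rho\in\Pi_k$ with blocks $B_1,\dots,B_m$, the diagonal measure $\mu_\rho$ obtained by pushing $\lambda^{\otimes m}$ forward under $(y_1,\dots,y_m)\mapsto(y_{c(1)},\dots,y_{c(k)})$ (with $c(a)$ the index of the block of $\rho$ containing $a$) satisfies $N^m\int\varphi\,d\mu_\rho=\sum_{i:\ i_a=i_b\ \text{whenever}\ a\sim_\rho b}\varphi(x_{i_1},\dots,x_{i_k})$.

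Grouping the tuples according to the partition they induce and applying Möbius inversion on $\Pi_k$ --- whose Möbius function factors over blocks, assigning weight $\prod_{B\in\rho}(-1)^{|B|-1}(|B|-1)!$ to $\rho$ --- converts the distinct-tuple sum into $\sum_{\rho\in\Pi_k}\bigl(\prod_{B\in\rho}(-1)^{|B|-1}(|B|-1)!\bigr)\,N^{|\rho|}\int\varphi\,d\mu_\rho$. Since $\tfrac{N!}{(N-k)!}=N\prod_{i=1}^{k-1}(N-i)$, the finest partition contributes the leading term $\tfrac{N^{k-1}}{\prod_{i=1}^{k-1}(N-i)}\lambda^{\otimes k}$. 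For the remaining $\rho$ I would regroup by block-size profile: a $\rho$ with $|\rho|=k-j$ blocks, $n$ of which have size $\ge2$, has its $n$ excess sizes forming a number partition $\bfp=(p_1,\dots,p_n)$ of $j$ with $k-j-n\ge0$ singletons left over --- precisely the constraint $j+n(\bfp)\le k$ of \eqref{monomial}. For every such $\rho$ one has $|\rho|=k-j$ and $\prod_{B\in\rho}(-1)^{|B|-1}(|B|-1)!=(-1)^j\prod_i p_i!$, so the alternating sign collapses to a single $(-1)^j$; and $\sum_{\rho\ \text{of profile}\ \bfp}\mu_\rho=\tfrac{k!}{|G_\bfp|}\,S_k\bigl(\id^{\otimes(p_1+1)}_\#\lambda\otimes\dots\otimes\id^{\otimes(p_n+1)}_\#\lambda\otimes\lambda^{\otimes(k-j-n)}\bigr)$, where $G_\bfp\subseteq\calS_k$ is the stabilizer of the canonical partition (equivalently of that tensor), of order $|G_\bfp|=\prod_i(p_i+1)!\cdot(k-j-n)!\cdot\prod_q(|\bfp^{-1}(q)|)!$; this last step is a group-action identity of measures valid for every $\lambda$, needing no genericity. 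Collecting prefactors and matching powers of $N$ then forces $d_\bfp^{(k)}=\tfrac{k!\prod_ip_i!}{|G_\bfp|}=\tfrac{k!}{(k-j-n)!}\prod_i\tfrac1{p_i+1}\prod_q\tfrac1{(|\bfp^{-1}(q)|)!}$, i.e.\ \eqref{coeffP}.

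For the sum rule \eqref{coeff}, the construction already yields $\sum_{\bfp}d_\bfp^{(k)}=\sum_{\rho\in\Pi_k:\,|\rho|=k-j}\prod_{B\in\rho}(|B|-1)!$, which counts pairs (set partition into $k-j$ blocks, cyclic order on each block), hence equals the number of permutations of $\{1,\dots,k\}$ with $k-j$ cycles, i.e.\ the unsigned Stirling number $s(k,k-j)$; reading off the coefficient of $x^{k-1-j}$ in $\prod_{i=1}^{k-1}(x+i)=\sum_{S\subseteq\{1,\dots,k-1\}}x^{k-1-|S|}\prod_{i\in S}i$ identifies $s(k,k-j)$ with $c_j^{(k)}$. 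The final assertion is then immediate: each $P_j^{(k)}(\lambda)$ is a nonnegative combination (coefficients $d_\bfp^{(k)}>0$) of the probability measures $\id^{\otimes(p_1+1)}_\#\lambda\otimes\dots\otimes\lambda^{\otimes(k-j-n)}$, hence a nonnegative measure of total mass $\sum_\bfp d_\bfp^{(k)}=c_j^{(k)}$, and $S_k$ preserves nonnegativity and total mass, so $\|P_j^{(k)}(\lambda)\|_{\TV}=\int_{X^k}dP_j^{(k)}(\lambda)=c_j^{(k)}$.

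I expect the crux to be the combinatorial bookkeeping in the regrouping step: enumerating the set partitions of $\{1,\dots,k\}$ with a prescribed block-size profile, identifying their sum with $S_k$ of the canonical tensor together with the correct multiplicity $k!/|G_\bfp|$, verifying that the block-wise Möbius weights collapse to the single sign $(-1)^j$, and reconciling the resulting rational prefactor with the closed form \eqref{coeffP} --- noticing along the way that $j+n(\bfp)\le k$ is nothing but the statement that $k-j$ blocks can accommodate $n$ blocks of size $\ge2$. The analytic content (passing from measures to test functions) is by contrast routine.
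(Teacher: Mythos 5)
Your proposal proves the wrong statement. Everything you write is a derivation of the polynomial identity \eqref{expan} for $\mu_k=M_kS_N\delta_{x_1,\ldots,x_N}$, i.e.\ of Theorem \ref{T:expan}, which is already available at this point of the paper. The content of Corollary \ref{C:extfinite} is the characterization of the \emph{extreme points} of $\PPnr(X^k)$, and your argument never touches extremality in either direction. The ``only if'' direction needs the fact that every extreme point of $\PPnr(X^k)=M_k\PPs(X^N)$ is of the form $M_kS_N\delta_{a_{i_1},\ldots,a_{i_N}}$ (the inclusion $\ext(M_k\PPs(X^N))\subseteq M_k\ext(\PPs(X^N))$ together with Theorem \ref{theo1-extreme} a)), and the ``if'' direction -- that each $F_{N,k}(\lambda)$, $\lambda\in\calP_{\frac{1}{N}}(X)$, cannot be written as a nontrivial convex combination of \emph{other $N$-representable $k$-plans} -- is the genuinely nontrivial part: in the paper it is Theorem \ref{theo-extremeabstract}, whose proof rests on the counting argument built on the bijectivity of $M_2:\ext(\PPs(X^N))\to\ext(\PPnr(X^2))$ from \cite{GFDV} (Theorem \ref{theo1-extreme} b), c)), i.e.\ on the convex-geometric fact that the highly non-injective marginal map sends no corner of $\PPs(X^N)$ into a face interior of $\PPnr(X^k)$. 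No manipulation of the formula for $F_{N,k}(\lambda)$ can by itself deliver this. The paper's proof of the corollary is precisely the combination of Theorem \ref{T:expan}, Theorem \ref{theo-extremeabstract} and the isomorphism property of $M_1$ from Theorem \ref{theo1-extreme}; your proposal supplies only the first ingredient. You also misread the ``moreover'' clause: the final assertion of the corollary is $M_1F_{N,k}(\lambda)=\lambda$ and $M_jF_{N,k}(\lambda)=F_{N,j}(\lambda)$ for $2\le j\le k-1$, not the TV-norm identity (which belongs to Theorem \ref{T:expan}); these marginal statements do follow in one line from $F_{N,k}(\lambda)=M_kS_N\delta_{x_1,\ldots,x_N}$ and $M_jM_k=M_j$, applying Theorem \ref{T:expan} with $j$ in place of $k$, but you never state them.

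For what it is worth, the part you did write appears sound and is a genuine alternative to the paper's derivation of \eqref{expan}: where the paper proves Proposition \ref{repfinite} by induction on $k$ via the recursion of Lemma \ref{L:recur}, you obtain the same expansion in one stroke by M\"obius inversion on the partition lattice (your block weights $(-1)^{|B|-1}(|B|-1)!$ are exactly the factors $(-1)^{k-n(\calP)}\beta_\calP$ of \eqref{nonrecurmu}), and your orbit--stabilizer bookkeeping reproduces \eqref{ordering}--\eqref{N'} and the coefficients \eqref{coeffP}, with the Stirling-number identification of $c_j^{(k)}$ replacing the paper's Vieta argument. But since Theorem \ref{T:expan} is already proved in the paper, this redoes existing work while the actual content of the corollary -- the convex-geometry input identifying $E_{N,k}$ with $\ext(\PPnr(X^k))$ -- is missing.
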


We now write out the universal polynomials $F_{N,k}$ explicitly for small $k$.
   
\vspace*{2mm}

   \begin{scriptsize}
   
{\bf Example: k=2.} The finite sum over $j$ in \eqref{expan} reduces to a single term for $j=1$, $-\frac{1}{N}S_2P_1^{(2)}(\lambda)$, and $P_1^{(2)}(\lambda)$ consists of a single term associated with the only partition $\bfp=1$ of $1$, $id_\#^{\otimes 2}\lambda$. Consequently
$$
   F_{N,2}(\lambda) = \frac{N}{N-1} \left[\lambda^{\otimes 2} - \frac{1}{N} \id^{\otimes 2}_\# \lambda\right].
$$
This expression agrees with \eqref{mu2}, and so Theorem \ref{T:expan} recovers \cite{GFDV} Theorem 2.1.   
\vspace*{2mm}
   
{\bf Example: k=3.} The finite series in \eqref{expan} runs from $j=1$ to $j=2$, and for these two values of $j$, the partitions contributing to the sum in \eqref{monomial} are the partitions $\bfp$ of $j$ satisfying $j+n(\bfp)\le 3$. These partitions and the associated coefficient $d_\bfp$ given by \eqref{coeffP} are 
\begin{center}\begin{tabular}{l | l | l | l }
     j & Partitions of $j$ with $j+n(\bfp)\le 3$ & our notation: $\bfp=$ &  coefficient $d_\bfp=$  \\ \hline
     1 & 1                                    & 1            &  3 \\ \hline
     2 & 2                                    & 2            &  2 \\                         
\end{tabular}\end{center}
and consequently 
$$
  F_{N,3}(\lambda) = \frac{N^2}{(N-1)(N-2)} \left[\lambda^{\otimes 3} -\frac{3}{N} S_3\Big((\id^{\otimes 2}_\# \lambda) \otimes \lambda\Big)+\frac{2}{N^2} \id^{\otimes 3}_\# \lambda\right].
$$  
   
{\bf Example: k=4.} By formulae \eqref{expan}--\eqref{coeffP}, the partitions $\bfp$ of $j$ contributing to the $j^{th}$ order correction are:
\begin{center}\begin{tabular}{l | l | l | l }
     j & Partitions of $j$ with $j+n(\bfp)\le 4$ & our notation: $\bfp=$ &  coefficient $d_\bfp=$  \\ \hline
     1 & 1                                    & 1            &  6 \\ \hline
     2 & 2                                    & 2            &  8 \\
       & 1+1                                  & (1,1)        &  3 \\ \hline
     3 & 3                                    & 3            &  6 \\
\end{tabular}\end{center}
and consequently
\begin{align*}
   F_{N,4}(\lambda ) = & \frac{N^3}{(N-1)(N-2)(N-3)} \left[ 
     \lambda^{\otimes 4} 
   - \frac{6}{N} S_4 \id^{\otimes 2}_{\#}\!\lambda \, \otimes \lambda^{\otimes 2} 
   + \frac{8 \, S_4\id^{\otimes 3}_\# \!\lambda \otimes \, \lambda 
         + 3\, S_4 \id^{\otimes 2}_\# \!\lambda \otimes \, \id^{\otimes 2}_\# \!\lambda}{N^2} \right.  \\
         & \left. - \frac{6}{N^3} \id^{\otimes 4}_\# \!\lambda 
    \right].
\end{align*}

{\bf Example: k=5.} The contributing partitions are
\begin{center}\begin{tabular}{l | l | l | l }
     j & Partitions of $j$ with $j+n(\bfp)\le 5$ & our notation: $\bfp=$ &  coefficient $d_\bfp=$  \\ \hline
     1 & 1                                    & 1            &  10 \\ \hline
     2 & 2                                    & 2            &  20 \\
       & 1+1                                  & (1,1)        &  15 \\ \hline
     3 & 3                                    & 3            &  30 \\
       & 2+1                                  & (2,1)        &  20 \\ \hline
     4 & 4                                    & 4            &  24 \\
\end{tabular}\end{center}
and so
\begin{align*}
   F_{N,5}(\lambda ) = & \frac{N^4}{(N-1)(N-2)(N-3)(N-4)} \left[ 
     \lambda^{\otimes 5} 
   - \frac{10}{N} S_5 \id^{\otimes 2}_{\#}\!\lambda \, \otimes \lambda^{\otimes 3} 
   + \frac{20 \, S_5\id^{\otimes 3}_\# \!\lambda \otimes \, \lambda^{\otimes 2} 
         + 15\, S_5 \id^{\otimes 2}_\# \!\lambda \otimes \, \id^{\otimes 2}_\# \!\lambda \otimes \lambda}{N^2} \right. \\
         & \left. - \frac{30 \, S_5\id^{\otimes 4}_\# \!\lambda \otimes \, \lambda 
         + 20\, S_5 \id^{\otimes 3}_\# \!\lambda \otimes \, \id^{\otimes 2}_\# \!\lambda}{N^3} 
    + \frac{24}{N^4} \id^{\otimes 5}_\# \!\lambda 
    \right ].
\end{align*}

{\bf Example: j=1 and j=k-1.} In these cases only one partition of $j$ satisfies $j+n(\bfp)\le k$ and formula \eqref{coeffP} for the coefficient $d_\bfp$ becomes particularly simple:
\begin{center}\begin{tabular}{l | l | l | l }
     j & Partitions of $j$ with $j+n(\bfp)\le k$ & our notation: $\bfp=$ &  coefficient $d_\bfp=$  \\ \hline
     1 & 1                                    & 1            &  $\tfrac{k(k-1)}{2}$ \\ \hline
     k-1 & k-1                                & k-1          &  (k-1)! \\
\end{tabular}\end{center}
It follows that the polynomials describing the first-order respectively order-(k-1) contribution to $F_{N,k}(\lambda)$ are
\begin{equation*}
     P_1^{(k)}(\lambda ) = \frac{k(k-1)}{2} \, \id^{\otimes 2}_{\#}\!\lambda \, \otimes \lambda^{\otimes (k-2)}  , \;\;\;\;\;\;\; P_{k-1}^{(k)}(\lambda ) = (k-1)! \, \id^{\otimes k}_\# \!\! \lambda .
\end{equation*}
\vspace*{1mm}

\end{scriptsize}
We now discuss the error when truncating the finite series in \eqref{expan}. 
Retaining only the mean-field term gives
\begin{equation} \label{err}
   F_{N,k}(\lambda) = \frac{N^{k-1}}{ \prod_{j=1}^{k-1}(N-j)} \left( \lambda^{\otimes k} + \eps_{N,k}(\lambda)\right) \mbox{ with }  \Vert \eps_{N, k} (\lambda)\Vert_{\TV} \le \frac{C_k}{N}
\end{equation}
and keeping the first $p$ correction terms ($p\in\{1,...,k-2\}$) we have
\begin{align} 
   F_{N,k}(\lambda) = & \frac{N^{k-1}}{ \prod_{j=1}^{k-1}(N-j)} \left[ \lambda^{\otimes k} + \sum_{j=1}^p  \frac{(-1)^j}{N^j} S_k \, P_j^{(k)}(\lambda ) + 
\eps_{N,k,p}(\lambda)\right] \nonumber \\ & \; \mbox{ with }  \Vert \eps_{N, k,p} (\lambda)\Vert_{\TV} \le \frac{C_k}{N^{p+1}}, \label{errp}
\end{align}
with constants $C_k$ independent of $N$ and $p$. For example, to give explicit values,
\begin{equation} \label{Ck}
   C_k=\sum_{j=1}^{k-1}c_j^{(k)}
\end{equation}
will do. Moreover the coefficients $d_\bfp^{(k)}$ and hence the $C_k$ are independent of the size $\ell$ of the finite state space.   
Thus, for $k$ fixed and any $N\ge 2$, retaining only the first $p$ correlation terms captures the extreme points up to an error which decreases {\it exponentially} in $p$, the rate being {\it uniform} in the size of the finite state space and {\it improving} logarithmically with $N$. 
\\[2mm]
Before proving Theorem \ref{T:expan} let us give a quick heuristic derivation of the formulae for the coefficients $c_j^{(k)}=\int dP_j^{(k)}(\lambda)$ which give the total mass of the $j^{th}$ order correction to independence for extremal $N$-representable $k$-plans. Expressions \eqref{mu2} and \eqref{mu3} suggest to try the ansatz 
\begin{equation}\label{expanshort}
      \mu_k =  \frac{N^{k-1}}{ \prod_{j=1}^{k-1}(N-j)} \left[ \lambda^{\otimes k} + \sum_{j=1}^{k-1}
            \frac{(-1)^j}{N^j} c_j^{(k)} \nu_j \right]
\end{equation}
with normalized measures $\nu_j$ (i.e. $\int d\nu_j=1$) and a priori unknown but $N$-independent coefficients $c_j$. Consider for example $k=4$. Integrating over $X^k$, using that $\mu_k$ and the $\nu_j$ are normalized, and multiplying both sides by the product $\prod_{j=1}^{k-1} (N-j)$ gives
\begin{equation} \label{ceq}
   N^3(1-\frac{c_1}{N} + \frac{c_2}{N^2} - \frac{c_3}{N^3}) = (N-1)(N-2)(N-3).
\end{equation}
Expanding the right hand side into powers of $N$ gives
$$
  (N-1)(N-2)(N-3) = N^3 - (1+2+3)N^2 + (1\cdot 2 + 1\cdot 3 + 2\cdot 3)N - (1\cdot 2\cdot 3)
$$ 
so equating coefficients yields $c_1=\sum_{1\le i\le 3} i \, (=6)$, $c_2=\sum_{1\le i<j\le 3}ij \, (=11)$, $c_3=\sum_{1\le i<j<k\le 3} ijk \, (=6)$, i.e. the asserted formulae for the $c_j$. Extending this heuristic argument to general $k$ is straightforward. 

Of course this argument is not a proof because it rests on the (as yet unjustified) ansatz \eqref{expanshort} with $N$-independent coefficients. This ansatz is a corollary of the more detailed result \eqref{expan}--\eqref{coeff} to whose proof we now turn.

We begin by eliminating the high-dimensional space $\PPs(X^N)$ which appears in \eqref{defmuk}. 

\begin{lem} \label{L:elid}
Any extremal $N$-representable $k$-plan $\mu_k$ given by \eqref{defmuk} can be written as 
\begin{equation}\label{elid}
\mu_k=\frac{(N-k)!}{N!} \sum_{\substack{(m_1, \ldots, m_k)\in \{ 1,...,N\}^k, \\ \mbox{\scriptsize pairwise distinct}}} \delta_{x_{m_1}\ldots x_{m_k}}.
\end{equation}  
\end{lem}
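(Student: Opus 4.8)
The plan is a short, essentially combinatorial computation: unfold the definitions of the symmetrization operator $S_N$ and the marginal operator $M_k$ on a Dirac mass, and then regroup the resulting finite sum of Diracs according to the ordered $k$-tuple of values occupying the first $k$ slots.

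First I would write out $S_N\delta_{x_1,\ldots,x_N}$ using \eqref{SymNOp}, obtaining $\frac{1}{N!}\sum_{\sigma\in\calS_N}\delta_{x_{\sigma(1)},\ldots,x_{\sigma(N)}}$. Next, using that $M_k$ is linear together with the elementary identity $M_k\delta_{y_1,\ldots,y_N}=\delta_{y_1,\ldots,y_k}$ — which is immediate from \eqref{Mk}, since for every Borel $A\subseteq X^k$ one has $(M_k\delta_{y})(A)=\delta_{y}(A\times X^{N-k})=\indi_A(y_1,\ldots,y_k)=\delta_{y_1,\ldots,y_k}(A)$ — I would deduce
\[
  \mu_k = M_k S_N \delta_{x_1,\ldots,x_N} = \frac{1}{N!}\sum_{\sigma\in\calS_N}\delta_{x_{\sigma(1)},\ldots,x_{\sigma(k)}}.
\]

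The last step is to collect terms. The assignment $\sigma\mapsto(\sigma(1),\ldots,\sigma(k))$ maps $\calS_N$ onto the set of ordered $k$-tuples of pairwise distinct indices in $\{1,\ldots,N\}$, and each such tuple has exactly $(N-k)!$ preimages, because once $\sigma(1),\ldots,\sigma(k)$ are fixed the remaining entries $\sigma(k+1),\ldots,\sigma(N)$ may be any ordering of the $N-k$ unused indices. Substituting this count and dividing by $N!$ gives precisely \eqref{elid}.

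I do not expect a genuine obstacle: the only point deserving a sentence of care is the fiber count in the last step — that each ordered $k$-tuple of distinct indices arises from exactly $(N-k)!$ permutations — and this is a standard counting fact. (Alternatively, one could phrase the whole argument probabilistically: sampling from $M_kS_N\delta_{x_1,\ldots,x_N}$ amounts to drawing $k$ of the $N$ labelled points $x_1,\ldots,x_N$ uniformly at random without replacement, which is exactly the uniform average appearing on the right-hand side of \eqref{elid}.)
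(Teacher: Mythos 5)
Your argument is correct and coincides with the paper's proof: the paper likewise expands $S_N\delta_{x_1,\ldots,x_N}$, applies $M_k$, and conditions the sum over permutations on the values $\sigma(1)=m_1,\ldots,\sigma(k)=m_k$, which is exactly your fiber count of $(N-k)!$ permutations per ordered $k$-tuple of distinct indices. No gaps.
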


\begin{proof}
Proceeding as in \cite{GFDV}, more specifically the proof of Lemma 2.1, we rewrite $\mu_k$ by plugging in the definition of the symmetrization operator $S_N$ and conditioning the sum over all permutations $\sigma\ : \, \{1,...,N\}\to\{1,...,N\}$ on the values on the first $k$ integers: 
 \[
  \begin{split}
 N! \mu_k &= M_k\sum_{\sigma \in S_N} \delta_{x_{\sigma(1)} \ldots x_{\sigma(N)}}  \\
 &=  M_k\sum_{\substack{(m_1,\ldots,m_k)\in \{ 1,...,N \}^k \\ \mbox{\scriptsize pairwise distinct} }}\sum_{\substack{\sigma \in S_N \\ \sigma(1) = m_1,...,\sigma(k)=m_k}} \delta_{x_{m_1} \ldots x_{m_k}x_{\sigma(k+1)} \ldots x_{\sigma(N)}} \\
  &= (N-k)! \sum_{\substack{(m_1, \ldots , m_k) \in \{ 1,...,N\}^k \\ \mbox{\scriptsize pairwise distinct}}} \delta_{x_{m_1} \ldots x_{m_k}}.
   \end{split}
 \] 

\end{proof} 
 
To establish Theorem \ref{T:expan} we will proceed by induction over $k$. The next lemma gives a deceptively simple recursion formula for extremal $N$-representable $k$-plans. Just like \eqref{elid}, it hides the inverse power series structure \eqref{expan} and the combinatorial complexity of the coefficients \eqref{coeffP} by expanding the symmetric plan $\mu_k$ in a non-symmetric basis of delta functions, leading to many terms with identical symmetrization. 

\begin{lem} \label{L:recur}
Let $N\ge 2$, and consider the $k$-plans $\mu_k$ ($k=2,...,N$) defined by \eqref{defmuk} for fixed $(x_1,...,x_N)\in X^N$. Then for $k=1,...,N-1$ and $\lambda=M_1\mu_k$,
\begin{equation}\label{recurmu}
 \mu_{k+1}= \frac{N}{N-k} \mu_k \otimes \lambda - \frac{1}{N-k}\sum_{j=1}^k {R_j}_\# \mu_k
\end{equation}
where $R_j$ : $X^k \to X^{k+1}$ is given by $R_j(z_1, \ldots, z_k):=(z_1, \ldots, z_k, z_j)$.
\end{lem}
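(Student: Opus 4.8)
The plan is to work at the level of the non-symmetric delta-function expansions \eqref{elid} provided by Lemma \ref{L:elid} and simply match both sides. First I would write, using \eqref{elid},
\[
   \mu_k = \frac{(N-k)!}{N!} \sum_{\substack{(m_1,\ldots,m_k) \\ \text{pairwise distinct}}} \delta_{x_{m_1}\ldots x_{m_k}},
   \qquad
   \mu_{k+1} = \frac{(N-k-1)!}{N!} \sum_{\substack{(m_1,\ldots,m_{k+1}) \\ \text{pairwise distinct}}} \delta_{x_{m_1}\ldots x_{m_{k+1}}},
\]
and note that $\lambda = \frac{1}{N}\sum_{m=1}^N \delta_{x_m}$. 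The idea is that a pairwise-distinct $(k+1)$-tuple is a pairwise-distinct $k$-tuple $(m_1,\ldots,m_k)$ together with an index $m_{k+1}$ distinct from all the others; so I would split the sum over $m_{k+1}\in\{1,\ldots,N\}$ into the $N$ unrestricted values minus the $k$ values $m_{k+1}\in\{m_1,\ldots,m_k\}$ that are forbidden.

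Concretely, for a fixed pairwise-distinct $k$-tuple $(m_1,\ldots,m_k)$,
\[
   \sum_{\substack{m_{k+1}=1 \\ m_{k+1}\notin\{m_1,\ldots,m_k\}}}^{N} \delta_{x_{m_1}\ldots x_{m_k} x_{m_{k+1}}}
   \; = \; \sum_{m_{k+1}=1}^{N}\delta_{x_{m_1}\ldots x_{m_k} x_{m_{k+1}}} \;-\; \sum_{j=1}^{k}\delta_{x_{m_1}\ldots x_{m_k} x_{m_j}}.
\]
Summing over all pairwise-distinct $(m_1,\ldots,m_k)$ and multiplying by $\tfrac{(N-k-1)!}{N!}$: the first term on the right gives $\tfrac{(N-k-1)!}{N!}\sum_{(m_1,\ldots,m_k)}\bigl(\sum_{m}\delta_{x_{m_1}\ldots x_{m_k}}\otimes\delta_{x_m}\bigr) = \tfrac{(N-k-1)!}{N!}\cdot N\cdot \tfrac{N!}{(N-k)!}\,\mu_k\otimes\lambda = \tfrac{N}{N-k}\,\mu_k\otimes\lambda$, using $\lambda = \tfrac1N\sum_m\delta_{x_m}$ and \eqref{elid}. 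For the second term, observe that $\delta_{x_{m_1}\ldots x_{m_k} x_{m_j}} = (R_j)_\#\,\delta_{x_{m_1}\ldots x_{m_k}}$ with $R_j$ as defined; pulling the push-forward out of the sum and applying \eqref{elid} once more gives $\tfrac{(N-k-1)!}{N!}\sum_{j=1}^k (R_j)_\#\bigl(\sum_{(m_1,\ldots,m_k)}\delta_{x_{m_1}\ldots x_{m_k}}\bigr) = \tfrac{1}{N-k}\sum_{j=1}^k (R_j)_\#\mu_k$. Subtracting yields \eqref{recurmu}.

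The calculation is essentially bookkeeping of factorial prefactors, so there is no serious obstacle; the one point that needs a moment's care is the case $m_{k+1}\notin\{m_1,\ldots,m_k\}$ being handled by inclusion–exclusion with \emph{exactly} $k$ forbidden values (legitimate precisely because $(m_1,\ldots,m_k)$ is pairwise distinct, so the forbidden set has cardinality $k$), and checking that the leftover tuples on the right-hand second term are still pairwise distinct in their first $k$ slots so that \eqref{elid} applies verbatim to $\sum_{(m_1,\ldots,m_k)}\delta_{x_{m_1}\ldots x_{m_k}} = \tfrac{N!}{(N-k)!}\mu_k$. I would also remark that $\lambda = M_1\mu_k$ is consistent with \eqref{lambdaempir}, so the statement's hypothesis $\lambda = M_1\mu_k$ matches the empirical measure used above.
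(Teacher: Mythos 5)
Your proposal is correct and is essentially the paper's own argument: the paper likewise expands $\mu_k$ and $\mu_{k+1}$ via \eqref{elid}, splits the sum over $m_{k+1}$ into the unrestricted sum minus the $k$ forbidden values, and identifies the two pieces as $\tfrac{N}{N-k}\mu_k\otimes\lambda$ and $\tfrac{1}{N-k}\sum_{j=1}^k {R_j}_\#\mu_k$ with the same factorial bookkeeping. Your prefactor computations and the remark on why the inclusion--exclusion step is legitimate match the paper's proof, so there is nothing to add.
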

 
\begin{proof}
We observe that $(m_1, \ldots, m_k, m_{k+1})\in\{1,...,N\}^{k+1}$ has pairwise distinct components if and only if $(m_1,...,m_k)$ has pairwise distinct components and $m_{k+1} \in \{1, \ldots,N\}\setminus\{m_1, \ldots, m_k\}$. So by inclusion-exclusion we get, using $(N-(k+1))!=(N-k)!/(N-k)$, 
\begin{align*}
 \mu_{k+1} &= \frac{(N-(k+1))!}{N!} \sum_{\substack{ (m_1,...,m_{k}) \in \{ 1,...,N\}^k \\ \mbox{\scriptsize pairwise distinct}}} \sum_{m_{k+1}\in \{ 1,...,N \}\setminus \{m_1,...,m_k \} }
               \delta_{x_{m_1}...x_{m_{k+1}}} \\
           &= \frac{(N-k)!}{N! \, (N-k)}  \Bigl( \sum_{i=1}^N \sum_{\substack{(m_1,...,m_{k}) \in \{ 1,...,N\}^k \\ \mbox{\scriptsize pairwise distinct}}} \delta_{x_{m_1}...x_{m_k}x_i} - 
           \sum_{\substack{(m_1,...,m_{k})\in \{ 1,...,N\}^k \\ \mbox{\scriptsize pairwise distinct}}} \sum_{j=1}^k \delta_{x_{m_1}...x_{m_k}x_{m_j}} \Bigr) \\
           &= \frac{N}{N-k}\mu_k\otimes\lambda \, - \, \frac{1}{N-k}\sum_{j=1}^k {R_j}_{\#} \mu_k.
\end{align*}
\end{proof}
Next we derive a non-recursive formula in terms of {\it set partitions} of $\{1,...,k\}$. To state it we need to introduce some notation. Recall that a set partition of $\{1,...,k\}$ is a set $\calP$ of pairwise disjoint nonempty subsets of $\{1,...,k\}$ (called blocks) whose union equals $\{1,...,k\}$. The set of all such partitions will be denoted $\parti_k$. For a partition $\calP\in\parti_k$, we denote by $n(\calP)$ the cardinality of $\calP$, so that $\calP=\{P_1, \ldots, P_{n(\calP)}\}$ for some $P_i\subseteq\{1,...,k\}$, and introduce the combinatorial factor
 \begin{equation}\label{defbetap}
   \beta_{\calP}:=\prod_{i=1}^{n(\calP)} ( | P_i| -1)! = \prod_{P\in\calP} ( | P| - 1)! \, . 
 \end{equation} 
Next, each partition $\calP$ induces a certain natural mapping $G_{\calP} \, : \, \calP(X) \to \calP(X^k)$. Informally, this mapping pushes, for each block $P$ of the partition $\calP$, a tensor factor $\lambda\in\calP(X)$ forward onto the diagonal of those cartesian factors $X_{i_1},...,X_{i_{| P|}}$ of the product space $X^k$ whose indices belong to $P$. More precisely, if $\calP=\{P_1, \ldots, P_{n}\}$, define $G_{\calP}(\lambda)$ by 
\begin{equation} \label{defGP}
   G_{\calP}(\lambda)(A_1\times ... \times A_k) = \prod_{P\in\calP} 
   \Bigl(\id^{\otimes | P|}_{\#} \lambda \Bigl)\Bigl(\prod_{i\in P} A_i\Bigr) \; \mbox{ for any }A_1,...,A_k\subseteq X.
\end{equation}
For instance if $k=4$ and $\calP=\{\{1,2\}, \{3,4\}\}$, $G_{\calP}(\lambda)=(\id^{\otimes 2}_\# \lambda)\otimes (\id^{\otimes 2}_\# \lambda)$, whereas if $k=5$ and $\calP=\{\{1,3\}, \{2,4,5\}\}$ then $G_{\calP}(\lambda)$ is defined by
 \[\int_{X^5} \varphi \; \mbox{ d} G_{\calP}(\lambda)=\int_{X^2} \varphi (x,y,x,y,y) \;  \mbox{d} \lambda(x) \;  \mbox{d} \lambda(y), \; \mbox{ for all } \varphi \in C_b(X^5).\] 
  
We then have the following representation formula.

\begin{prop}\label{repfinite}
Let $\mu_k$ be defined by \eqref{defmuk}, and let $\lambda$ be its one-point marginal \pref{lambdaempir}. Then 
    \begin{equation}\label{nonrecurmu}
  \mu_k = \frac{(N-k)!}{N!}  \sum_{\calP\in \parti_k} (-1)^{k-n(\calP)} N^{n(\calP)} 
  \beta_{\calP} G_{\calP}(\lambda).
    \end{equation}
\end{prop}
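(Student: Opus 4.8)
The plan is to prove Proposition \ref{repfinite} by induction on $k$, using the recursion \eqref{recurmu} from Lemma \ref{L:recur} as the inductive step. The base case $k=1$ is trivial: $\parti_1$ has a single element $\calP=\{\{1\}\}$ with $n(\calP)=1$, $\beta_\calP=1$, $G_\calP(\lambda)=\lambda$, and the prefactor $(N-1)!/N! = 1/N$ combines with $(-1)^0 N^1$ to give $\mu_1 = \lambda$, which is correct.

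For the inductive step, assume \eqref{nonrecurmu} holds for $\mu_k$ and plug it into \eqref{recurmu}. The key combinatorial fact driving the induction is the standard bijective correspondence between set partitions of $\{1,\ldots,k+1\}$ and pairs consisting of a set partition $\calP$ of $\{1,\ldots,k\}$ together with a choice of how the new element $k+1$ is attached: either $k+1$ forms a new singleton block (this is the ``$\mu_k\otimes\lambda$'' term in \eqref{recurmu}), or $k+1$ is adjoined to one of the existing blocks $P_i$ of $\calP$ (this corresponds to applying ${R_j}_\#$ for an index $j\in P_i$, since $R_j$ duplicates the $j$-th coordinate into position $k+1$, and ${R_j}_\#$ of a measure of the form $G_\calP(\lambda)$ puts the new coordinate on the same diagonal block as $j$). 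First I would verify these two transformation rules precisely at the level of the maps $G_\calP$: namely $G_\calP(\lambda)\otimes\lambda = G_{\calP'}(\lambda)$ where $\calP' = \calP\cup\{\{k+1\}\}$, and ${R_j}_\#G_\calP(\lambda) = G_{\calP''}(\lambda)$ where $\calP''$ is obtained from $\calP$ by inserting $k+1$ into the block containing $j$. Both are immediate from the defining formula \eqref{defGP}.

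Next I would track the coefficients. Fix a target partition $\calQ\in\parti_{k+1}$ and let $Q$ be the block containing $k+1$. If $Q = \{k+1\}$ is a singleton, then $\calQ$ arises only from the tensor term with $\calP = \calQ\setminus\{Q\}$; one checks $n(\calP) = n(\calQ)-1$, $\beta_\calP = \beta_\calQ$, and the prefactor bookkeeping together with the factor $N/(N-k)$ times $(N-k)!/N!$ reproduces exactly $(N-(k+1))!/(N+1)!\cdot\ldots$ — more precisely one verifies that $\tfrac{N}{N-k}\cdot\tfrac{(N-k)!}{N!}\cdot(-1)^{k-n(\calP)}N^{n(\calP)}\beta_\calP$ equals $\tfrac{(N-k-1)!}{N!}\cdot(-1)^{(k+1)-n(\calQ)}N^{n(\calQ)}\beta_\calQ$, using $(N-k-1)! = (N-k)!/(N-k)$ and $N^{n(\calP)} = N^{n(\calQ)-1}$. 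If instead $|Q|\ge 2$, then $\calQ$ arises from the $-\tfrac{1}{N-k}\sum_j {R_j}_\#\mu_k$ term, with $\calP = (\calQ\setminus\{Q\})\cup\{Q\setminus\{k+1\}\}$, and there are exactly $|Q\setminus\{k+1\}| = |Q|-1$ choices of $j$ (one for each element of the block $Q$ other than $k+1$) that produce this same $\calQ$. Here $n(\calP) = n(\calQ)$, and $\beta_\calP = \beta_\calQ / (|Q|-1)$ because the block $Q$ of size $|Q|$ contributes $(|Q|-1)!$ to $\beta_\calQ$ whereas in $\calP$ the corresponding block has size $|Q|-1$ contributing $(|Q|-2)!$. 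Thus the $|Q|-1$ identical contributions, each weighted by $-\tfrac{1}{N-k}\cdot\tfrac{(N-k)!}{N!}(-1)^{k-n(\calP)}N^{n(\calP)}\beta_\calP$, sum to $(|Q|-1)\cdot(-1)\cdot\tfrac{(N-k-1)!}{N!}(-1)^{k-n(\calQ)}N^{n(\calQ)}\tfrac{\beta_\calQ}{|Q|-1} = \tfrac{(N-k-1)!}{N!}(-1)^{(k+1)-n(\calQ)}N^{n(\calQ)}\beta_\calQ$, which is again the desired coefficient.

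Since every $\calQ\in\parti_{k+1}$ receives exactly the right coefficient and no $\calQ$ is produced by both mechanisms (the two cases are distinguished by whether $k+1$ is a singleton block), summing over all $\calQ$ completes the induction. The main obstacle I anticipate is purely bookkeeping: getting the cancellation of the sign $(-1)$ from the recursion against the increment in $k-n(\calP)$ right in both cases, and correctly accounting for the multiplicity $|Q|-1$ in the non-singleton case and checking it exactly cancels the drop in $\beta_\calP$. Everything else — the structure of the induction, the partition bijection, and the elementary identities for $G_\calP$ — is routine. One could alternatively try to prove \eqref{nonrecurmu} directly from Lemma \ref{L:elid} by a Möbius-inversion / inclusion-exclusion argument over the partition lattice, expressing the constraint ``pairwise distinct'' via inclusion-exclusion over coincidence patterns; this is conceptually cleaner but the coefficient $\beta_\calP = \prod(|P|-1)!$ then emerges from counting linear orders within blocks, which again requires care, so the inductive route via \eqref{recurmu} seems the most economical.
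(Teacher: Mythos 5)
Your proof is correct and follows essentially the same route as the paper: induction on $k$ via the recursion \eqref{recurmu} of Lemma \ref{L:recur}, splitting the partitions of $\{1,\ldots,k+1\}$ according to whether $k+1$ forms a singleton block (matching the $\mu_k\otimes\lambda$ term) or is adjoined to an existing block (matching the $\sum_j {R_j}_\#\mu_k$ term, with the multiplicity $|Q|-1$ cancelling against the ratio $\beta_\calQ/\beta_\calP$). Apart from a harmless slip ($(N+1)!$ where you mean $N!$, corrected in your next clause), your coefficient bookkeeping coincides with the paper's.
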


 \begin{proof}
For $k=1$ the assertion is obvious, and for $k=2$ it easily follows from \pref{mu2}. Let us assume that \eqref{nonrecurmu} holds for $k\le N-1$. By \eqref{recurmu}
\[
  \begin{split}
  \mu_{k+1}=\tfrac{(N-(k+1))!}{N!}(A+B), \, 
  \mbox{ with } A:=\sum_{\calP\in \parti_k} (-1)^{k-n(\calP )} N^{n(\calP )+1} \beta_{\calP} G_{\calP}(\lambda)\otimes \lambda \\
 \mbox{ and }  \; B:=\sum_{j=1}^k \sum_{\calP\in \parti_k} (-1)^{k+1-n(\calP )} N^{n(\calP )} \beta_{\calP}  {R_j}_\# G_{\calP}(\lambda).
  \end{split}
\]
Now let us partition $\parti_{k+1}$ into the two subsets $\parti_{k+1}^a$ and its complement $\parti_{k+1}^b$ where $\parti_{k+1}^a$ consists of all partitions $\calP'$ of $\{1, \ldots, k+1\}$ for which the singleton $\{k+1\}$ belongs to $\calP'$. Thus $\calP'\in \parti_{k+1}^a$ if and only if it can be written as $\calP\cup\{\{k+1\}\}$ with $\calP\in \parti_k$. Note then that $n(\calP')=n(\calP)+1$, $\beta_\calP=\beta_{\calP'}$ and $G_{\calP'}(\lambda)=G_\calP(\lambda) \otimes \lambda$. Therefore we have 
\[
\begin{split}
&\sum_{\calP'\in \parti_{k+1}^a} (-1)^{k+1-n(\calP')} N^{n(\calP')} \beta_{\calP'} G_{\calP'}(\lambda)\\
  &=\sum_{\calP\in \parti_{k}} (-1)^{k-n(\calP)} N^{n(\calP)+1} \beta_{\calP} G_{\calP}(\lambda) \otimes \lambda\\
 &=A.
\end{split}
\]
 Partitions $\calP'$ in $\parti_{k+1}^b$ are those for which $k+1$ does not form a singleton in $\calP'$. This is the same as saying that the following map $a^+_{k+1}$ from $\{(\calP, P) \, : \, \calP\in \parti_k, \; P\in\calP \}$ to $\parti_{k+1}^b$ is a bijection:  $a^+_{k+1}(\{P_1,...,P_n\}, \, P_i) := \calP' := \{ P_1,...,P_i\cup\{k+1\},...,P_n\}$ ($i=1,...,n$), or -- in label-free notation -- 
$$
    a^+_{k+1}(\calP,P) := \{ P\cup\{k+1\} \} \cup \{ Q\in\calP \, : \, Q\neq P \}.
$$
We chose the notation $a^+_{k+1}$ to emphasize the analogy with creation operators in quantum theory: the map $a^+_{k+1}$ ``creates'' an extra entry $k+1$ in some block of the partition. Note that if $\calP'=a^+_{k+1}(\calP,P)$, then $n(\calP)=n(\calP')$, $\beta_{\calP'}=\beta_{\calP} \cdot   |P|$, and $G_{\calP'}(\lambda)={R_j} _\# G_{\calP}(\lambda)$ for every $j\in P$, so that 
$$
   \beta_{\calP'} G_{\calP'}(\lambda)=\beta_\calP \sum_{j\in P} {R_j}_\#G_\calP(\lambda). 
$$
We thus have
 \[\begin{split}
&\sum_{\calP'\in \parti_{k+1}^b} (-1)^{k+1-n(\calP')} N^{n(\calP')} \beta_{\calP'} G_{\calP'}(\lambda)\\
&= \sum_{\calP\in \parti_k} \sum_{P\in\calP } (-1)^{k+1-n(\calP)} N^{n(\calP)} \beta_\calP \sum_{j\in P} {R_j}_\# G_{\calP}(\lambda )   \\
&=\sum_{j=1}^k \sum_{\calP\in \parti_k} (-1)^{k+1-n(\calP)} N^{n(\calP)} \beta_\calP \Big(\sum_{P\in\calP \; : \; j\in P} 1 \Big) {R_j}_\# G_\calP(\lambda)\\
&=\sum_{j=1}^k \sum_{\calP\in \parti_k} (-1)^{k+1-n(\calP)} N^{n(\calP)} \beta_\calP  {R_j}_\# G_\calP(\lambda)\\
&=B,
\end{split}\]
 which gives the desired expression for $\mu_{k+1}=\tfrac{(N-(k+1))!}{N!}(A+B)$.
 
  \end{proof}

It remains to match the unwieldy-to-evaluate expression \eqref{nonrecurmu} with the more explicit expansion  \eqref{expan}, \eqref{monomial}, \eqref{coeffP}. 

We begin by dealing with the fact that expression \eqref{nonrecurmu} contains many terms with identical symmetrization. Since $\mu_k$ is symmetric, applying the symmetrization operator $S_k$ to both sides gives 
\begin{equation} \label{muksymmetrized}
  \mu_k = \sum_{\calP\in\parti_k} (-1)^{k-n(\calP)} 
  \frac{N^{n(\calP)}}{N\cdot(N-1)\cdot ... \cdot (N-k+1)} \, 
  \beta_{\calP}\,  S_k\,  G_{\calP}(\lambda).
\end{equation}
If $\calP\in\parti_k$, then according to \eqref{defGP} $G_\calP(\lambda)$ pushes $n(\calP)$ factors $\lambda$ onto the $k$ cartesian factors of the product space $X^k$. The different 
$S_k G_{\calP}(\lambda)$'s which can arise from such a set partition are in bijective 
correspondence to the partitions $\bfp'$ of the number $k$, via
\begin{equation} 
\bfp'= (p'_1,...,p'_{m} )
   \longmapsto S_k \id^{\otimes p'_1}_\# \!\lambda \, \otimes \, ... \otimes \id^{\otimes p'_{m}}_\# \!\lambda . \label{bij}
\end{equation}
The set partitions $\calP\in\parti_k$ satisfying $S_kG_{\calP}(\lambda) =\,$r.h.s. of \eqref{bij} for some given partition $\bfp'=(p'_1,...,p'_m)$ of $k$ are precisely those consisting of $m$ sets $P_1,...,P_{m}$ with cardinalities $| P_i|=p'_i$ for all $i$. Let us denote their totality by $\parti_k(\bfp')$. A canonical set partition in $\parti_k(\bfp')$ is 
\begin{align*}
  & \calP(\bfp') = \Bigl\{ P_1,...,P_m \Bigr\} \; \mbox{ with} \\
  & P_1 = \{1,...,p'_1\}, \, P_2 = \{p'_1+1,...,p'_1 + p'_2\},..., \\
  & P_m=\{ p'_1+...+p'_{m-1}, ..., \underbrace{p'_1+...+p'_m}_{=k}\} .               
\end{align*}
For this set partition, as well as any other $\calP\in\parti_k(\bfp')$,
\begin{equation} \label{betaPp}
   n(\calP) = m, \;\;\; \beta_{\calP} = (p'_1-1)! \cdot ... \cdot (p'_m-1)!.
\end{equation}
To reduce \eqref{nonrecurmu} to a sum over partitions $\bfp'$ of the number $k$, it remains to determine the number $\calN(\bfp')$ of set partitions belonging to $\parti_k(\bfp')$. Let us fix 
any partition $\bfp'=(p'_1,...,p'_m)$ of $k$. First of all we note that
\begin{equation}\label{ordering}
  \calN(\bfp') = \frac{1}{\prod\limits_{q\in \mbox{\scriptsize Ran}\, \bfp'} (| \bfp' {^{-1}}(q)|)!} \, {\calN} {'} 
% = \frac{1}{\prod\limits_{q\in \mbox{\scriptsize Ran}\, \bfp} (\sharp \bfp^{-1}(q))!} \frac{1}{(k-(j+n))!} \, {\calN} {'}
\end{equation}
where ${\calN} {'}$ is the number of set partitions corresponding to $\bfp'$ endowed with an ordering of the blocks such that larger blocks come before smaller ones, ${\calN}  {'}=|\{(P'_1,...,P'_{m}) \, : \, | P'_i|=p'_i \mbox{ for all }i\}|$. Here the combinatorial factor in the denominator accounts for the fact that 
%This is because larger-size blocks in a set partition are automatically distinguished from smaller-sized ones, but 
any group of $b$ equal-sized blocks in a set partition admits $b!$ orderings. 
But the number $\calN'$ is straightforward to compute: choosing $P'_1$ means choosing $p'_1$ numbers out of $k$, so there are ${k\choose p'_1}$ choices; given $P'_1$, choosing $P'_2$ means choosing $p'_2$ numbers out of the remaining $k-p'_1$ numbers, yielding ${k-p'_1\choose p'_2}$ choices; and so on. It follows that   
\begin{equation} \label{N'}
  {\calN} ' = {k \choose p'_1} \cdot {k-p'_1 \choose p'_2} \cdot ... \cdot
          {k-(p'_1 + ... + p'_{m-1}) \choose p'_m}  = \frac{k!}{p'_1!\cdot ... \cdot p'_m!}.
\end{equation}
Combining \eqref{muksymmetrized}, the bijectivity of the map \eqref{bij}, \eqref{betaPp}, \eqref{ordering}, and  \eqref{N'} yields:
\begin{prop} \label{P:almostewens}
Let $\mu_k$ be any extremal $N$-representable $k$-plan (see \eqref{defmuk}), and let $\lambda$ be its one-point marginal \pref{lambdaempir}. Then 
\begin{align} 
&  \mu_k =  \sum_{p'\, \mbox{\scriptsize partition of }k}
   c_{\bfp'} S_k \id^{\otimes p'_1}_\# \!\lambda \, \otimes \, ... \otimes \id^{\otimes p'_{n(\bfp')}}_\# \!\lambda \mbox{ with } \nonumber \\
& 
  c_{\bfp'} = (-1)^{k-n(\bfp' )} \! \frac{k!}{N(N-1)...(N-k+1)} \! N^{n(\bfp' )} \frac{1}{p'_1\cdot ... \cdot p'_{n(\bfp')}} \cdot  \frac{1}{\prod\limits_{q\in \mbox{\scriptsize Ran}\, \bfp'} (| \bfp' {^{-1}}(q)|)!} .   
  \label{nonrecurmu2}
\end{align}
\end{prop}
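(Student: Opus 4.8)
The plan is to derive Proposition \ref{P:almostewens} directly from the already-established Proposition \ref{repfinite} by bookkeeping: the only work is to pass from a sum over \emph{set} partitions $\calP\in\parti_k$ to a sum over \emph{number} partitions $\bfp'$ of $k$, collecting the coefficients correctly. First I would start from the symmetrized version \eqref{muksymmetrized}, which follows from \eqref{nonrecurmu} by applying $S_k$ and using that $\mu_k$ is already symmetric, together with $N!/(N-k)!=N(N-1)\cdots(N-k+1)$. Then I would invoke the bijection \eqref{bij} between the distinct measures $S_kG_\calP(\lambda)$ and number partitions $\bfp'$ of $k$, so that the sum over $\parti_k$ splits as $\sum_{\bfp'}\sum_{\calP\in\parti_k(\bfp')}$. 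On each inner sum the quantities $n(\calP)=n(\bfp')$, $\beta_\calP=(p'_1-1)!\cdots(p'_{n(\bfp')}-1)!$, and $S_kG_\calP(\lambda)$ are constant by \eqref{betaPp}, so the inner sum just multiplies by the cardinality $\calN(\bfp')=|\parti_k(\bfp')|$.

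Next I would plug in the formula for $\calN(\bfp')$. Writing $\calN(\bfp')=\calN'/\prod_{q\in\mathrm{Ran}\,\bfp'}(|\bfp'^{-1}(q)|)!$ as in \eqref{ordering}, with $\calN'$ the number of \emph{ordered} tuples of blocks of the prescribed sizes, I would justify \eqref{N'} by the stated iterated binomial-coefficient count, giving $\calN'=k!/(p'_1!\cdots p'_{n(\bfp')}!)$. Assembling the pieces, the coefficient of $S_k\id^{\otimes p'_1}_\#\lambda\otimes\cdots\otimes\id^{\otimes p'_{n(\bfp')}}_\#\lambda$ becomes
\[
  (-1)^{k-n(\bfp')}\,\frac{N^{n(\bfp')}}{N(N-1)\cdots(N-k+1)}\,(p'_1-1)!\cdots(p'_{n(\bfp')}-1)!\cdot\frac{k!}{p'_1!\cdots p'_{n(\bfp')}!}\cdot\frac{1}{\prod_{q\in\mathrm{Ran}\,\bfp'}(|\bfp'^{-1}(q)|)!}.
\]
Using $(p'_i-1)!/p'_i! = 1/p'_i$ collapses the ratio of factorials to $k!/(p'_1\cdots p'_{n(\bfp')})$, which is exactly the claimed $c_{\bfp'}$ in \eqref{nonrecurmu2}. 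I would also note that this manipulation is valid for $k=1$ (trivially) and is consistent with $k=2$ by comparison with \eqref{mu2}, as a sanity check.

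There is essentially no analytic obstacle here, since Proposition \ref{repfinite} is assumed; the only thing that needs genuine care is the combinatorial identity $\calN(\bfp')\cdot\beta_\calP = k!/(p'_1\cdots p'_{n(\bfp')})\cdot(\text{repeat-block factor})$, i.e. making sure the overcounting factor $\prod_q(|\bfp'^{-1}(q)|)!$ for groups of equal-sized blocks is neither double-counted nor omitted. The cleanest way I would present this is: unordered set partitions with block-size multiset $\bfp'$ are in bijection with ordered block tuples modulo permutations within each group of equal sizes, so $\calN(\bfp')=\calN'/\prod_q(|\bfp'^{-1}(q)|)!$; since $\calN'$ and $\beta_\calP$ are themselves insensitive to reordering blocks, no further correction is needed. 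This is precisely the content summarized just before the proposition's statement, so the proof is a two-line assembly of \eqref{muksymmetrized}, \eqref{bij}, \eqref{betaPp}, \eqref{ordering}, \eqref{N'}, with the algebraic simplification $(p_i-1)!/p_i!=1/p_i$ at the end.
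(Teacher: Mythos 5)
Your proposal is correct and follows essentially the same route as the paper: it symmetrizes the set-partition formula of Proposition \ref{repfinite} to obtain \eqref{muksymmetrized}, groups the set partitions by their block-size multiset via the bijection \eqref{bij}, and multiplies by the count $\calN(\bfp')$ from \eqref{ordering}--\eqref{N'}, with the same simplification $\beta_\calP\cdot\calN(\bfp')=\frac{k!}{p'_1\cdots p'_{n(\bfp')}}\cdot\frac{1}{\prod_{q\in\mathrm{Ran}\,\bfp'}(|\bfp'^{-1}(q)|)!}$. Your care about not double-counting the repeat-block factor is exactly the point the paper also flags, so nothing is missing.
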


The expression for $\mu_k$ in \eqref{nonrecurmu2} may be taken as an alternative definition of the polynomial $F_{N,k}(\lambda)$ introduced in Theorem \ref{T:expan}. 

\subsection{End of the proof of Theorem \ref{T:expan}}

The last step in the proof of Theorem \ref{T:expan} is to match the above expression with the series given in the theorem. We would like to decompose \eqref{nonrecurmu2} into terms of order $\tfrac{1}{N^j}$ (times the overall order $1$ prefactor in \eqref{expan}). To this end, we re-write the $N$-dependent factors in \eqref{nonrecurmu2} as 
$$
   \frac{1}{N\cdot(N-1)\cdot ... \cdot(N-k+1)} N^{n(\bfp')} = \frac{N^{k-1}}{(N-1) \cdot ... \cdot (N-k+1)} \, \frac{1}{N^{k-n(\bfp')}}.
$$
This together with the fact that the only partition $\bfp'$ of $k$ with $k-n(\bfp')=0$ is $\bfp'=(1,...,1)$, in which case the r.h.s. of \eqref{bij} is $\lambda^{\otimes k}$ and  $\tfrac{1}{p'_1\cdot ... \cdot p'_m} = 1$, shows that
\begin{equation} \label{muksplitorder}
\begin{split}
   \mu_k = \frac{N^{k-1}}{(N-1)...(N-k+1)} \Bigl( \lambda^{\otimes k} + \sum_{j=1}^{k-1} \frac{(-1)^j}{N^j} \sum_{\substack{\bfp' \, \mbox{\scriptsize partition of }k \\ 
   \mbox{\scriptsize with }n(\bfp') = k-j}} \frac{k!}{p'_1\cdot ... \cdot p'_{n(\bfp')}} \cdot \\ 
   \frac{1}{\prod\limits_{q\in \mbox{\scriptsize Ran}\, \bfp'} (| \bfp' {^{-1}}(q)|)!} 
   S_k \id^{\otimes p'_1}_\# \!\lambda \, \otimes \, ... \otimes \id^{\otimes p'_{n(\bfp')}}_\# \!\lambda \Bigr) .
   \end{split}
\end{equation}
The partitions $\bfp'$ of $k$ of length $n(\bfp')=k-j$ are in bijective correspondence to the partitions $\bfp$ of the number $j$ with length $n(\bfp)\le k-j$, via  
\begin{align} 
  & \bfp=(p_1,...,p_n) \mbox{ partition of }j \mbox{ of length }n\le k-j \nonumber \\
  & \longmapsto \bfp'=(p'_1,...,p'_{k-j})=\begin{cases} (p_1+1,...,p_n+1) & \mbox{if }n=k-j \\
    (p_1+1,...,p_n+1,\!\!\!\underbrace{1,...,1}_{k-j-n\,\mbox{\scriptsize times}}\!\!\! ) & \mbox{if }n<k-j.
    \end{cases} \label{bij2}
\end{align}  
Moreover, for any two partitions $\bfp$, $\bfp'$ related by \eqref{bij2}, we have 
\begin{equation} 
    S_k \id^{\otimes p'_1}_\# \!\lambda \, \otimes \, ... \otimes \id^{\otimes p'_{m}}_\# \!\lambda
  = S_k \id^{\otimes p_1+1}_\# \!\lambda \, \otimes \, ... \otimes \id^{\otimes p_n+1}_\# \!\lambda \otimes \lambda^{\otimes k-(j+n)} \label{pp'1}
\end{equation}
and
\begin{align}
   & \; p'_1\cdot ... \cdot p'_{n(\bfp')} = (p_1+1)\cdot ... \cdot (p_n+1), \;\;\;
    \; \label{nonumber} \\
   & \prod\limits_{q\in \mbox{\scriptsize Ran}\, \bfp'} (| \bfp' {^{-1}}(q)|)! 
    = \prod\limits_{q\in \mbox{\scriptsize Ran}\, \bfp} (| \bfp{^{-1}}(q)|)! \, \cdot \,    (k-(j+n))!, \label{pp'2}
\end{align}
with the last factor above accounting for the $k-(j+n)$ components with value $1$ occurring in $\bfp'$. Combining eq.~\eqref{muksplitorder}, the bijectivity of \eqref{bij2}, and identities \eqref{pp'1}--\eqref{pp'2} yields the desired expression for $\mu_k$ in \eqref{expan}-\eqref{monomial}-\eqref{coeffP}. Let us finally prove formula \pref{coeff}. Observe that the detailed representation  \eqref{expan} implies \eqref{expanshort}, where 
 each $\nu_j$ is a probability measure and the coefficients $c_j^{(k)}$ do not depend on $N$.  Taking the total mass of each side of \eqref{expanshort} we get that for every $N\geq k$, one has
\[\prod_{j=1}^{k-1} (N-j) =N^{k-1} + \sum_{j=1}^{k-1}(-1)^j c_j^{(k)} N^{k-j-1}\]
so that  $(-1)^j c_j^{(k)}$ is the coefficient of $X^{k-j-1}$ in the polynomial $\prod_{j=1}^{k-1}(X-j)$ which proves \eqref{coeff} thanks to Vieta's formulas.

\section{Extreme points and integral representation of $N$-representable $k$-plans on continuous state spaces}  \label{sec-cts}
  
Now we return to the case of a general state space, i.e. we just assume that $X$ is a Polish space. Importantly, the results in this section cover the prototypical continuous state space  $X=\R^d$. 

Recall the set $\PP_{\frac{1}{N}}(X)$ of $\frac{1}{N}$-quantized probability measures on $X$ introduced in \eqref{quanti1}, \eqref{quanti}. 

Given $\lambda\in\PP(X)$ we define $F_{N,k}(\lambda)\in\PP(X^k)$ by formulae \eqref{expan}, \eqref{monomial}, \eqref{coeffP}; we note that the expressions in these formulae make sense for general Polish spaces $X$ and general (not necessarily $\frac{1}{N}$-quantized) $\lambda\in\PP(X)$.

  \subsection{De Finetti style representation}
  
  We now state a de Finetti style representation result for $N$-representable $k$-plans.
  
  \begin{thm}\label{Choquet} Let $N\ge k \ge 2$. A measure
  $\mu_k \in \PP(X^k)$ is $N$-representable if and only if there exists $\alpha \in \PP(\PP(X))$ such that $\alpha(\PP_{\frac{1}{N}}(X))=1$ and 
  \begin{equation}\label{choquetformula}
  \mu_k=\int_{\PP_{\frac{1}{N}}(X)} F_{N,k}(\lambda) \, \mbox{d} \alpha(\lambda)
  \end{equation}
  where $F_{N,k}$ is defined by \pref{expan}--\pref{coeffP}. Moreover, if $k=N$, the measure $\alpha$ in \pref{choquetformula} is unique.
  \end{thm}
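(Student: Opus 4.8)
The plan is to reduce the continuous-state-space statement to the finite-state-space machinery already developed in Section \ref{sec-rep}, and then handle the two directions of the equivalence separately before addressing uniqueness.

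\textbf{Sufficiency.} Suppose $\mu_k=\int_{\PP_{\frac{1}{N}}(X)} F_{N,k}(\lambda)\,\mathrm{d}\alpha(\lambda)$. For each $\lambda=\frac1N\sum_{i=1}^N\delta_{x_i}\in\PP_{\frac1N}(X)$, Lemma \ref{L:elid} (whose derivation only uses the definition of $S_N$ and $M_k$ and so is valid on any Polish $X$) shows that $F_{N,k}(\lambda)=M_kS_N\delta_{(x_1,\dots,x_N)}$ is genuinely an $N$-representable $k$-plan, represented by the symmetric $N$-plan $S_N\delta_{(x_1,\dots,x_N)}$. I would then argue that $N$-representability is preserved under mixtures: choose, measurably in $\lambda$, a symmetric $N$-plan $\gamma_\lambda$ with $M_k\gamma_\lambda=F_{N,k}(\lambda)$ (a measurable selection; one can use the explicit $\gamma_\lambda=S_N\delta_{(x_1,\dots,x_N)}$ with a measurable choice of representative points, or bypass selection by noting $\PPnr(X^k)$ is convex and narrowly closed by Lemma \ref{L:narrowclosed}), set $\gamma:=\int\gamma_\lambda\,\mathrm{d}\alpha(\lambda)\in\PPs(X^N)$, and check $M_k\gamma=\mu_k$ by linearity and narrow continuity of $M_k$. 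The cleanest route avoids selection entirely: since each $F_{N,k}(\lambda)\in\PPnr(X^k)$ and $\PPnr(X^k)$ is convex and narrowly closed, any narrow barycentre of a probability measure supported on it lies in $\PPnr(X^k)$; I expect this is the intended argument.

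\textbf{Necessity.} Let $\mu_k=M_k\gamma$ with $\gamma\in\PPs(X^N)$. The idea is approximation by finitely supported measures. Since $\gamma$ is tight, pick finite partitions (or a sequence of finer and finer measurable partitions of a fixed compact carrying most of the mass) and push $\gamma$ forward to get symmetric $N$-plans $\gamma^{(m)}$ on finite state spaces $X_m\subseteq X$ with $\gamma^{(m)}\to\gamma$ narrowly; more precisely one replaces $\gamma$ by $T^{(m)}_\#\gamma$ for suitable quantizing maps $T^{(m)}:X\to X_m$, keeping symmetry. On each finite $X_m$, Theorem \ref{theo-extremeabstract} together with Corollary \ref{C:extfinite} (or directly Minkowski's theorem) gives $M_k\gamma^{(m)}=\int F_{N,k}(\lambda)\,\mathrm{d}\alpha^{(m)}(\lambda)$ for some $\alpha^{(m)}\in\PP(\PP_{\frac1N}(X_m))\subseteq\PP(\PP_{\frac1N}(X))$. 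Now $\PP_{\frac1N}(X)$ is narrowly closed (Lemma \ref{L:closed}b) and — this is the key compactness input — the $\alpha^{(m)}$ form a tight family in $\PP(\PP(X))$: tightness of the $\mu_k^{(m)}=M_k\gamma^{(m)}$, hence of $\gamma^{(m)}$, hence of their one-point marginals, yields a compact $K\subseteq X$ capturing uniformly most mass, and one checks this forces the $\lambda$'s to concentrate on a narrowly compact subset of $\PP_{\frac1N}(X)$ up to small error. Extract a narrowly convergent subsequence $\alpha^{(m)}\to\alpha$, with $\alpha(\PP_{\frac1N}(X))=1$ by closedness. Passing to the limit: $\lambda\mapsto F_{N,k}(\lambda)$ is narrowly continuous (it is a fixed polynomial in $\lambda$ built from the continuous operations $\lambda\mapsto\lambda^{\otimes\ell}$ and $\lambda\mapsto\id^{\otimes\ell}_\#\lambda$) and bounded (total mass $1$ on probability measures, but one must also control the non-compact worry by the uniform tightness just established), so $\int F_{N,k}\,\mathrm{d}\alpha^{(m)}\to\int F_{N,k}\,\mathrm{d}\alpha$, giving \eqref{choquetformula}.

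\textbf{Uniqueness for $k=N$.} When $k=N$, $F_{N,N}(\lambda)=S_N\delta_{(x_1,\dots,x_N)}$ where $\frac1N\sum\delta_{x_i}=\lambda$, and by Corollary \ref{C:extfinite} the map $\lambda\mapsto F_{N,N}(\lambda)$ is a bijection from $\PP_{\frac1N}(X)$ onto $\ext\big(\PPs(X^N)\big)$ with measurable inverse given by $M_1$. Thus \eqref{choquetformula} with $k=N$ says $\gamma=(\Phi)_\#\alpha$ where $\Phi(\lambda)=F_{N,N}(\lambda)$; applying $(M_1\circ(\cdot))$ i.e. pushing the barycentre identity through the continuous affine injection, or more directly testing against functions of the form $\varphi\mapsto\int\varphi\,\mathrm{d}M_1(\cdot)$, one recovers $\alpha$ from $\gamma$. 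Concretely: for $g\in C_b(\PP(X))$ of the form $g=G\circ M_1$ with $G$ continuous bounded, $\int g\,\mathrm{d}\alpha=\int G(\lambda)\,\mathrm{d}\alpha(\lambda)$; but these do not separate points of $\PP(\PP_{\frac1N}(X))$, so instead I would use that $\Phi$ is a homeomorphism onto its (narrowly closed) image inside $\PPs(X^N)$ — injectivity is the nontrivial ``different $\lambda$ give different $F_{N,N}(\lambda)$'' part of Corollary \ref{C:extfinite}, continuity is clear, and properness follows since $\PP_{\frac1N}(X)$ maps into the compact-once-restricted-to-tight-families set — so $\Phi_\#$ is injective on $\PP(\PP_{\frac1N}(X))$, forcing $\alpha=(\Phi^{-1})_\#\gamma$.

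\textbf{Main obstacle.} I expect the delicate point to be the necessity direction, specifically the \emph{uniform tightness} of the discretized priors $\alpha^{(m)}$ and the legitimacy of passing to the limit on the non-compact Polish space: one must ensure no mass of $\alpha^{(m)}$ escapes to quantized measures spreading out in $X$, which requires translating tightness of $\gamma$ into tightness at the level of the ``prior'' space $\PP(\PP_{\frac1N}(X))$. The finite-state combinatorics (Theorem \ref{T:expan}, Theorem \ref{theo-extremeabstract}) and the algebraic identity $F_{N,N}(\lambda)=S_N\delta_{(x_1,\dots,x_N)}$ do all the structural work; the remaining effort is purely measure-theoretic approximation and a compactness/continuity bookkeeping, plus the observation that injectivity of $\lambda\mapsto F_{N,N}(\lambda)$ upgrades to injectivity of the induced push-forward on priors, which is exactly what uniqueness for $k=N$ needs.
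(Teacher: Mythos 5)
Your existence argument is broadly sound but only half of it matches the paper. The sufficiency direction is essentially the paper's proof: each $F_{N,k}(\lambda)$ with $\lambda\in\PP_{\frac1N}(X)$ equals $M_kS_N\delta_{\xx}$ by Theorem \ref{T:expan}, and one passes from finitely supported priors to general $\alpha$ using narrow continuity of $\lambda\mapsto F_{N,k}(\lambda)$ and Lemma \ref{L:narrowclosed}; no measurable selection is needed. For necessity you take a genuinely different and heavier route (quantize the state space, apply the finite-state Minkowski/extreme-point results, then extract a limit of the discrete priors), and the step you yourself flag as the main obstacle -- uniform tightness of the $\alpha^{(m)}$ -- is only asserted. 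It can be filled (tightness of the intensity measures $\int\lambda\,\mathrm{d}\alpha^{(m)}(\lambda)=M_1\mu_k^{(m)}$ plus a Markov-type estimate, in the spirit of the coercivity argument of Section \ref{sec-hs}), but the paper avoids the issue entirely: approximate $\gamma$ by finitely supported $\gamma^n$, observe that $M_kS_N\delta_{\xx}=F_{N,k}(\Lambda(\xx))$ with $\Lambda(\xx):=\frac1N\sum_{i}\delta_{x_i}$, so the discrete prior is \emph{explicitly} $\alpha^n=\Lambda_\#\gamma^n$; narrow continuity of $\Lambda$ gives $\alpha^n\to\Lambda_\#\gamma=:\alpha$ with no compactness or subsequence extraction, and Portmanteau together with Lemma \ref{L:closed} yields $\alpha(\PP_{\frac1N}(X))=1$.

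The genuine gap is in the uniqueness part for $k=N$. Formula \eqref{choquetformula} is a barycentre (mixture) identity, not the pushforward identity $\gamma=\Phi_\#\alpha$ that you write ($\Phi_\#\alpha$ lives in $\PP(\PP(X^N))$, not in $\PP(X^N)$), and injectivity of $\Phi:\lambda\mapsto F_{N,N}(\lambda)$ -- hence of $\Phi_\#$ on priors -- says nothing about whether two \emph{distinct} priors can have the \emph{same} barycentre; ruling that out is exactly what uniqueness asserts. Your mechanism would in fact prove too much: since $M_1F_{N,k}(\lambda)=\lambda$, the map $\lambda\mapsto F_{N,k}(\lambda)$ is injective (indeed a homeomorphism onto its narrowly closed image, with inverse $M_1$) for \emph{every} $k\ge 2$, yet uniqueness of $\alpha$ fails for $k<N$, as the paper's explicit example with $N=3$, $k=2$ shows. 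What is missing is a class of test functions $\varphi\in C_b(X^N)$ for which $\lambda\mapsto\int_{X^N}\varphi\,\mathrm{d}F_{N,N}(\lambda)$ produces enough functions of $\lambda$ to separate priors on $\PP_{\frac1N}(X)$ (you correctly observed that the affine functionals you tried, which only see $\int\lambda\,\mathrm{d}\alpha(\lambda)$, do not). The paper supplies exactly this: taking $\varphi=\psi\circ\Lambda$ with $\psi\in C_b(\PP(X))$ and using that $F_{N,N}(\lambda)=S_N\delta_{\xx}$ is carried by the permutation orbit of $\xx$, on which $\Lambda\equiv\lambda$, one gets $\int_{X^N}\psi\circ\Lambda\,\mathrm{d}F_{N,N}(\lambda)=\psi(\lambda)$, whence $\alpha=\Lambda_\#\mu$ is uniquely determined; this computation is where the hypothesis $k=N$ enters essentially, and it is the step your proposal lacks.
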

  
The meaning of \eqref{choquetformula} is that for every test function $\varphi\in C_b(X^k)$, 
\[  \int_{X^k} \varphi \, d\mu_k = \int_{\PP_{\frac{1}{N}}(X)} \Phi(\lambda)d\alpha(\lambda),\]
where
\[ \Phi(\lambda) := \int_{X^k} \varphi(x_1,...,x_k) d \Bigl( F_{N,k}(\lambda) \Bigr)(x_1,...,x_k).\]

Note that $\Phi \, : \, \lambda\mapsto\Phi(\lambda)$ is a continuous function on $\PP(X)$ endowed with the narrow topology, by the narrow-to-narrow continuity of $F_{N,k}$. Moreover the sup norm of $\Phi$ is bounded by the sup norm of $\varphi$ times the TV norm of $F_{N,k}(\lambda)$, the finiteness of the latter being clear from the definition. It follows that $\Phi\in C_b(\PP(X))$, whence $\int_{\PP_{\frac{1}{N}}(X)} \Phi(\lambda)d\alpha(\lambda)$ 
% the integral of $\Phi$ against $\alpha$ 
is well defined for any $\alpha\in\PP(\PP(X))$.

  Theorem \ref{Choquet} generalizes the celebrated de Finetti-Hewitt-Savage theorem from infinitely representable to finitely representable measures, or - in probabilistic language - from infinitely to finitely extendible sequences of random variables. Formally, in the limit $N\to\infty$, the domain of integration $\PP_{\frac{1}{N}}(X)$ in \eqref{choquetformula} approaches all of $\PP(X)$ and the integrand tends to the independent measure $\lambda^{\otimes k}$, recovering de Finetti (see Section \ref{sec-hs} for a rigorous account). Thus in the finitely representable case, the role of the independent measures $\lambda^{\otimes k}$ in de Finetti is taken by the universally correlated measures $F_{N,k}(\lambda)$, which contain corrections of order $N^{-j}$ for $j=1,...,k$. 
  
  The additional assumption $k=N$ for uniqueness cannot be omitted, see the next section for a simple counterexample.
   
  \begin{proof}
  Recall that in a Polish space, finitely supported probability measures are dense with respect to narrow convergence. We know from Theorem \ref{T:expan} that when $\lambda:=N^{-1} \sum_{i=1}^N \delta_{x_i}\in \PP_{\frac{1}{N}}(X)$ then $F_{N,k}(\lambda)=M_k S_N \delta_{x_{1}\ldots x_N}$ and so $F_{N,k}(\lambda)\in \PPnr(X^k)$. By convexity of $\PPnr(X^k)$, any measure of the form \pref{choquetformula} with a finitely supported probability measure $\alpha$ on $\PP_{\frac{1}{N}}(X)$ also belongs to $\PPnr(X^k)$. Now if $\alpha\in \PP(\PP_{\frac{1}{N}}(X))$ is arbitrary and $\mu_k$ is given by \pref{choquetformula}, we approximate $\alpha$ by a sequence of finitely supported measures $\alpha^n$. We now use that the map $\lambda \mapsto F_{N, k}(\lambda)$ is continuous under narrow convergence. This follows immediately from the definition \eqref{expan}--\eqref{coeffP} and the continuity of the maps $\lambda\mapsto\lambda^{\otimes j}$ and $\lambda\mapsto\id^{\otimes j}_{\#}\lambda$. Hence $\mu_k^n:=\int_{\PP_{\frac{1}{N}}(X)} F_{N,k}(\lambda) \mbox{d} \alpha^n(\lambda)$ converges narrowly to $\mu_k$ which therefore belongs to $\PPnr(X^k)$, since the latter is closed under narrow convergence (see Lemma \ref{L:narrowclosed}).

  \smallskip

  Conversely, given $\gamma\in \PP(X^N)$ and $\mu_k:=M_k S_N \gamma\in \PPnr(X^k)$, let $\gamma^n$ be a sequence of finitely supported probability measures which narrowly converges to $\gamma$, so that $\mu_k^n:=M_k S_N \gamma^n$ converges to $\mu_k$. Let us write $\gamma^n:=\sum_{j \in J_n} \gamma_j^n \delta_{\xx_j^n}$ where $J_n$ is finite and $\xx_j^n=(x_{1, j}^n, \ldots x_{N, j}^n) \in X^N$. Using Theorem \ref{T:expan} again, we know that $\mu_k^n$ can be written as
  \begin{equation}\label{approxrep}
  \mu_k^n:=\int_{\PP_{\frac{1}{N}}(X)} F_{N,k}(\lambda) \;  \mbox{d} \alpha^n(\lambda)
  \end{equation}
  with
  \begin{equation}\label{defalphan}
  \alpha^n:=\sum_{j \in J_n} \gamma_j^n \delta_{\Lambda(\xx_j^n)}=\Lambda_\# \gamma^n 
  \end{equation}
  where the map $\Lambda$ : $X^N \mapsto \PP_{\frac{1}{N}}(X)$ is defined for all $\xx:=(x_1, \ldots, x_N)\in X^N$ by
  \[\Lambda(\xx):=\frac{1}{N} \sum_{i=1}^N \delta_{x_i}.\]
  Note that $\Lambda$ is Lipschitz with respect to $W_1$; in particular it is continuous from $X^N$ to $\PP(X)$ endowed with the narrow topology. Since $\gamma^n$  converges narrowly to $\gamma$, $\alpha^n$ converges narrowly to $\alpha:=\Lambda_\# \gamma$. Since $\PP_{\frac{1}{N}}(X)$ is closed (see Lemma \ref{L:closed}), it follows from the Portmanteau theorem (see, e.g., \cite{Bi99}) and the fact that $\alpha^n(\PP_{\frac{1}{N}}(X))=1$ for every $n$ that  $\alpha(\PP_{\frac{1}{N}}(X))=1$. (Here we use the following part of the Portmanteau theorem: if $C\subseteq \PP(X)$ is closed and $\alpha^n\in\PP(\PP(X))$ converges narrowly to $\alpha$, then $\alpha(C)\ge \limsup_{n\to\infty}\alpha^n(C)$.)  Finally,  thanks to the narrow continuity of $F_{N,k}$ we deduce the desired representation by integrating \eqref{approxrep} against a test function and passing to the limit $n\to \infty$:
 \[\mu_k=\int_{\PP_{\frac{1}{N}}(X)} F_{N,k}(\lambda) \mbox{d} \alpha(\lambda).\]
 
 \smallskip
 
 Finally, assume $k=N$ and that $\mu\in \PPs(X^N)$ can be written as 
 \begin{equation}\label{choquetNNN}
 \mu=\int_{\PP_{\frac{1}{N}}(X)} F_{N,N}(\lambda) \mbox{d} \alpha(\lambda).
 \end{equation}
 Let $\psi$ : $\PP(X)\to \R$ be bounded and continuous for the narrow topology so that $\psi \circ \Lambda \in C_b(X^N)$. Let us now observe that if $\lambda=\Lambda(\xx)\in \PP_{\frac{1}{N}}(X)$ for some $\xx=(x_1, \ldots, x_N)\in X^N$, then $S_N \delta_{\xx}=F_{N,N}(\lambda)$ and since $\Lambda(\xx)=\Lambda(x_{\sigma(1)}, \ldots, x_{\sigma(N)})=\lambda$, for every $\sigma \in {\cal{S}}_N$, we have
 \[\int_{X^N} \psi\circ \Lambda \; \mbox{d} F_{N,N}(\lambda)=\psi(\lambda). \]
Taking $\psi\circ \Lambda$ as a test function in \pref{choquetNNN} (recall that $\Lambda$ and hence $\psi\circ\Lambda$ is continuous)  yields
\[\int_{X^N} \psi \circ \Lambda \; \mbox{d} \mu= \int_{\PP_{\frac{1}{N}}(X)} \psi(\lambda) \, \mbox{d} \alpha(\lambda)\]
i.e. $\alpha=\Lambda_\# \mu$, showing in particular the uniqueness of $\alpha$.

  \end{proof}
  
 From Theorem \ref{Choquet} and the law of large numbers, we easily deduce how to recover the prior $\alpha$ from sampling, as emphasized in our introduction.

 \begin{coro}\label{C:sampling}
Let $Z=(Z_1, \ldots, Z_N)$ be a finitely exchangeable  sequence of random variables with values in $X$, let $\mu\in \PPs(X^N)$ be the law of $Z$ and let $\alpha \in \PP(\PP(X))$ be such that $\alpha(\PP_{\frac{1}{N}}(X))=1$ and 
  \begin{equation}\label{choquetformulN}
  \mu=\int_{\PP_{\frac{1}{N}}(X)} F_{N,N}(\lambda) \, \mbox{d} \alpha(\lambda).
  \end{equation}
Let $(Z^{(\nu)})_{\nu\in \N}$ be i.i.d drawn according to $\mu$, and consider the $\PP(X)$-valued sequence
\[\Lambda(Z^{(\nu)}):=\frac{1}{N} \sum_{i=1}^N \delta_{Z_i^{(\nu)}}. \;  \]
Then, almost surely, the empirical measure  $\frac{1}{n} \sum_{\nu=1}^n \delta_{\Lambda(Z^{(\nu)})}$
converges narrowly to $\alpha$ as $n\to \infty$.
 \end{coro}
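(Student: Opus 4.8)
The plan is to combine the de Finetti style representation of Theorem~\ref{Choquet} in the case $k=N$ (which identifies the unique prior as $\alpha = \Lambda_\# \mu$) with a law of large numbers for i.i.d.\ random variables taking values in the Polish space $\PP(X)$. First I would record the key structural fact extracted from the uniqueness part of the proof of Theorem~\ref{Choquet}: for $k=N$ the prior $\alpha$ satisfying \eqref{choquetformulN} is exactly $\Lambda_\#\mu$, where $\Lambda : X^N \to \PP_{\frac{1}{N}}(X)$, $\Lambda(x_1,\dots,x_N) = \tfrac1N\sum_{i=1}^N \delta_{x_i}$, is the (Lipschitz, hence continuous) empirical-measure map. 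In particular, if $Z=(Z_1,\dots,Z_N)$ has law $\mu$, then the $\PP(X)$-valued random variable $\Lambda(Z)$ has law $\alpha$.

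Next I would set up the law of large numbers. Since $(Z^{(\nu)})_{\nu\in\N}$ is i.i.d.\ with law $\mu$ and $\Lambda$ is measurable (indeed continuous), the sequence $(\Lambda(Z^{(\nu)}))_{\nu\in\N}$ is i.i.d.\ in $\PP(X)$ with common law $\alpha = \Lambda_\#\mu$. The claim is that the empirical measures $\tfrac1n\sum_{\nu=1}^n \delta_{\Lambda(Z^{(\nu)})}$ converge narrowly to $\alpha$ almost surely. This is the Glivenko--Cantelli / Varadarajan theorem: for i.i.d.\ samples from a fixed probability measure $\alpha$ on a Polish space $Y$ (here $Y=\PP(X)$, which is Polish when endowed with $W_1$ as recalled after \eqref{defW1}), the empirical measures converge almost surely in the narrow topology to $\alpha$. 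The cleanest route is to fix a countable convergence-determining family $(\psi_m)_{m\in\N} \subseteq C_b(\PP(X))$ (which exists because $\PP(X)$ is Polish, e.g.\ built from a countable dense subset and the metric $W_1$), apply the classical strong law of large numbers to each real-valued i.i.d.\ sequence $(\psi_m(\Lambda(Z^{(\nu)})))_{\nu}$ to get $\tfrac1n\sum_{\nu=1}^n \psi_m(\Lambda(Z^{(\nu)})) \to \int \psi_m \, d\alpha$ almost surely, and then intersect the countably many almost-sure events. On the resulting event of full probability, $\tfrac1n\sum_{\nu=1}^n \delta_{\Lambda(Z^{(\nu)})}$ tests correctly against every $\psi_m$, hence converges narrowly to $\alpha$ since the $\psi_m$ are convergence-determining.

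I do not expect a serious obstacle here: the only point requiring a little care is justifying the existence of a countable convergence-determining family on $\PP(X)$ (equivalently, the separability/metrizability of the narrow topology on $\PP(\PP(X))$), which is standard since $(\PP(X),W_1)$ is itself Polish and one may quote Parthasarathy or the Varadarajan theorem directly. If one prefers to avoid invoking Varadarajan as a black box, the explicit family $\psi_m$ can be taken to consist of finite products of functions of the form $\lambda\mapsto \int_X g\, d\lambda$ with $g$ ranging over a countable convergence-determining family on $X$; bounded continuous functions of such integrals separate points of $\PP(\PP(X))$ and are countably generated. The passage from ``converges against each $\psi_m$'' to ``converges narrowly'' is then immediate, and the identification of the limit as $\alpha$ rather than some other measure uses precisely the uniqueness clause of Theorem~\ref{Choquet} via $\alpha=\Lambda_\#\mu$. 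This completes the proof of Corollary~\ref{C:sampling}.
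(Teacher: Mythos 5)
Your proposal is correct and follows essentially the same route as the paper: identify $\alpha=\Lambda_\#\mu$ from the uniqueness argument in the proof of Theorem~\ref{Choquet}, observe that $(\Lambda(Z^{(\nu)}))_\nu$ is then i.i.d.\ with law $\alpha$ on the Polish space $\PP(X)$, and conclude by the strong law of large numbers for empirical measures on Polish spaces (Varadarajan), which the paper quotes directly and you merely unpack via a countable convergence-determining family.
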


\begin{proof}
We have seen in the proof of Theorem \ref{Choquet} that $\alpha=\Lambda_{\#} \mu$ so that $\Lambda(Z)$ has law $\alpha$. Hence $(\Lambda(Z^{(\nu)})_\nu$ are i.i.d $\PP(X)$-valued drawn according to $\alpha$. Since $\PP(X)$ endowed with the topology of narrow convergence is a Polish space, the claim follows from the strong law of large numbers for empirical measures on Polish spaces. 
\end{proof}

\subsection{Independent sequences as a convex mixture of extremal exchangeable sequences}

Let us compare, in a simple example, the traditional ``basis'' for representing exchangeable sequences, independent sequences, with the new one advocated in this paper, extremal exchangeable sequences. This example illustrates that  extremal exchangeables may not be a convex mixture of independents, but that independents are always a unique convex mixture of extremal exchangeables (Theorem \ref{Choquet}).  
\\[1mm]
\noindent
{\bf Example.} Let $X$ be the finite state space consisting of the three colors red, green, and blue, that is to say
$$
   X = \{ \ttr, \, \ttg, \, \ttb\},
$$
and let $N=k=3$. Consider the probability measure 
$$
    \lambda_*=\tfrac23\delta_{{\tt r}} + \tfrac13 \delta_{{\tt g}},
$$
which corresponds to an {\tt rrg} urn (i.e. an urn containing two red balls and one green ball). Now consider a sequence of three independent random variables with law $\lambda_*$, that is to say 
$$
      (Z_1,Z_2,Z_3) \sim  \lambda_*^{\otimes 3}.
$$
{\it This sequence corresponds to three independent draws with replacement from an {\tt rrg} urn}. Our goal is to find the unique representation of this joint law as a convex mixture of extreme exchangeables. 

Denote the above $\lambda_*$ by ${\tt rrg}$, and similiarly $\delta_{\ttr}$ by ${\tt rrr}$ etc. By Theorem \ref{Choquet} and the fact that $\lambda_*$ contains only red and green balls, the joint law $\lambda_*^{\otimes 3}$ must be representable as a convex combination of those $F_{N,3}(\lambda)$ where $\lambda$ contains only red and green balls, that is, $\lambda\!\! =${\tt rrr}, {\tt rrg}, {\tt rgg}, {\tt ggg}. To determine the convex combination we need to compute the $F_{N,3}(\lambda)$, which is a straightforward task given our explicit formula from section \ref{sec:univpoly},
$$
  F_{N,3}(\lambda) = \frac{N^2}{(N-1)(N-2)} \left[\lambda^{\otimes 3} -\frac{3}{N} S_3\Big((\id^{\otimes 2}_\# \lambda) \otimes \lambda\Big)+\frac{2}{N^2} \id^{\otimes 3}_\# \lambda\right].
$$  
The result is given in the following table, where we identify probability measures $\lambda = \lambda_1\delta_{\ttr}+\lambda_2\delta_{\ttg}+\lambda_3\delta_{\ttb}$ in $\PP(X)$ with their coefficient vectors in $\R^3$, and -- analogously -- probability measures in $\PP(X^3)$ with their coefficient tensors  in $\R^{3\times 3\times 3}$.
\begin{center}
\begin{tabular}{| l | p{20mm} p{20mm} p{20mm} |} 
\hline
                  & $\bigl(F_{N,3}(\lambda)\bigr)_{\cdot \, \cdot \, 1}$ 
                  & $\bigl(F_{N,3}(\lambda)\bigr)_{\cdot \, \cdot \, 2}$
                  & $\bigl(F_{N,3}(\lambda)\bigr)_{\cdot \, \cdot \, 3}$ \\[1mm] 
\hline
$\lambda=\ttr\ttr\ttr$ & 
       $\begin{pmatrix} 1 &   & \\
                          & 0 & \\
                          &   & 0 \end{pmatrix}$ & 
        $\begin{pmatrix} 0 &   & \\
                          & 0 & \\
                          &   & 0 \end{pmatrix}$ & 
        $\begin{pmatrix} 0 &   & \\
                          & 0 & \\
                          &   & 0 \end{pmatrix}$                    \\[3mm]
\hline
$\lambda=\ttr\ttr\ttg$ & 
       $\begin{pmatrix} 0 & \tfrac13   & \\
                        \tfrac13  & 0 & \\
                          &   & 0 \end{pmatrix}$ & 
        $\begin{pmatrix} \tfrac13 &   & \\
                          & 0 & \\
                          &   & 0 \end{pmatrix}$ & 
        $\begin{pmatrix} 0 &   & \\
                          & 0 & \\
                          &   & 0 \end{pmatrix}$                    \\[3mm]
\hline
$\lambda=\ttr\ttg\ttg$ & 
       $\begin{pmatrix} 0 &       & \\
                          & \tfrac13  & \\
                          &   & 0 \end{pmatrix}$ & 
        $\begin{pmatrix} 0 & \tfrac13  & \\
                         \tfrac13 & 0 & \\
                          &   & 0 \end{pmatrix}$ & 
        $\begin{pmatrix} 0 &   & \\
                          & 0 & \\
                          &   & 0 \end{pmatrix}$                    \\[3mm]
\hline
$\lambda=\ttg\ttg\ttg$ & 
       $\begin{pmatrix} 0 &   & \\
                          & 0 & \\
                          &   & 0 \end{pmatrix}$ & 
        $\begin{pmatrix} 0 &   & \\
                          & 1 & \\
                          &   & 0 \end{pmatrix}$ & 
        $\begin{pmatrix} 0 &   & \\
                          & 0 & \\
                          &   & 0 \end{pmatrix}$                    \\[3mm]
\hline                         
\end{tabular}
\end{center}
On the other hand, the joint measure we seek to represent is
$$
   \bigl(\lambda_*^{\otimes 3}\bigr)_{\cdot \, \cdot \, 1}\! = \!
   \begin{pmatrix}
       \tfrac{8}{27} & \tfrac{4}{27} & \\[0.5mm]
       \tfrac{4}{27} & \tfrac{2}{27} & \\[0.5mm]
                     &               & 0
   \end{pmatrix}\! , \;\;
   \bigl(\lambda_*^{\otimes 3}\bigr)_{\cdot \, \cdot \, 2}\! = \! \begin{pmatrix}
       \tfrac{4}{27} & \tfrac{2}{27} & \\[0.5mm]
       \tfrac{2}{27} & \tfrac{1}{27} & \\[0.5mm]
                     &               & 0
   \end{pmatrix}\! , \;\;
   \bigl(\lambda_*^{\otimes 3}\bigr)_{\cdot \, \cdot \, 3}\! = \! 
   \begin{pmatrix}
        0     &    & \\[0.5mm]
              &  0 & \\[0.5mm]
              &    & 0
   \end{pmatrix}\! .
$$
From the above explicit expressions one sees that 
\begin{equation}\label{urnex}
   \lambda_*^{\otimes 3} = \tfrac{8}{27} F_{N,3}(\ttr\ttr\ttr) +\tfrac49 F_{N,3}(\ttr\ttr\ttg) + \tfrac29 F_{N,3}(\ttr\ttg\ttg)+\tfrac{1}{27} F_{N,3}(\ttg\ttg\ttg).
\end{equation}
Thus the probability measure $\alpha$ in $\PP(\PP(X))$ in \eqref{choquetformula} is in our case given by 
$
   \alpha = \tfrac{8}{27}\delta_{\ttr\ttr\ttr} +\tfrac49 \delta_{\ttr\ttr\ttg} + \tfrac29 \delta_{\ttr\ttg\ttg}+\tfrac{1}{27}\delta_{\ttg\ttg\ttg}.
$

The representation \eqref{urnex} has the following 
probabilistic meaning. 
{\it One can simulate drawing three balls with replacement from an \ttr\ttr\ttg$\,$ urn by 
\begin{itemize}
    \item 
first picking one of the four urns \ttr\ttr\ttr, \ttr\ttr\ttg, \ttr\ttg\ttg, \ttg\ttg\ttg $\,$ with probabilities $\tfrac{8}{27}$, $\tfrac{4}{9}$, $\tfrac29$, $\tfrac{1}{27}$ (i.e. probability ratios $8 \, :\, 12\, :\, 6\, : \, 1$), 
   \item
   then drawing three balls without replacement from the picked urn.
\end{itemize}
} % end italics
\noindent
Moreover thanks to the uniqueness result in Theorem \ref{Choquet}, the above choice of urns and probabilities provides the unique way of simulating the given draws with replacement by draws {\it without} replacement. 

We emphasize that the converse (simulating draws without replacement by draws {\it with} replacement) is not possible, due to the well known fact that finite exchangeable laws may not be representable as convex mixtures of independents. For instance, $F_{N,3}(\ttr\ttr\ttg)$ (the joint law for three draws without replacement from an \ttr\ttr\ttg$\,$ urn), being extremal thanks to Theorem \ref{T:expan}, is not a convex combination of any other joint laws, and in particular not of any independent joint laws.
\\[2mm]
Let us also provide a simple example which shows that uniqueness of the measure $\alpha$ in \eqref{choquetformula} can fail when $k<N$. 
\\[2mm]
{\bf Example.} Let $X$, $N$, and $\lambda_*$ be as above, but $k=2$. The measure $\lambda_*^{\otimes 2}$ (corresponding to two independent draws from a $\ttr\ttr\ttg\,$ urn) cannot just be  represented by the right hand side of \eqref{urnex} with $F_{N,3}$ replaced by $F_{N,2}$, but alternatively by 
$\tfrac29 F_{N,2}(\ttr\ttr\ttr) + \tfrac23 F_{N,2}(\ttr\ttr\ttg) + \tfrac19 F_{N,2}(\ttg\ttg\ttg)$, as the reader can easily check. 

%We do not think that these conclusions are obvious, even though our example was rather elementary.
%
%
%
  
  \subsection{Extremal $N$-representable $k$-plans}  
  
  The integral representation given by \pref{choquetformula} in Theorem \ref{Choquet} will allow  us to identify the set of extreme points of  $\PPnr(X^k)$ as
  \begin{equation}\label{defEnk}
  \begin{split}
   E_{N,k}&:=\{F_{N,k}(\lambda) \, : \,  \lambda\in \PP_{\frac{1}{N}}(X)\}\\
   &=\{M_k S_N \delta_{x_1, \ldots, x_N} \, : \, (x_1, \ldots, x_N)\in X^N\}
   \end{split}
   \end{equation}
  and to show in addition that all these points are exposed. Recall that if $C$ is a convex subset of $\PP(X^k)$ then $\mu\in C$ is an exposed point of $C$ if there exists $\varphi \in C_b(X^k)$ such that $\int_{X^k} \varphi\, \mbox{d} \mu <\int_{X^k} \varphi \, \mbox{d} \nu$ for every $\nu \in C\setminus\{\mu\}$. It is obvious that exposed points are extreme points but the converse need not be true in general. For the  set of $N$-representable $k$-plans, we have the following

  \begin{thm}\label{extexp} Let $N\ge k\ge 2$. \\[1mm]
a) The set of extreme points of $\PPnr(X^k)$ is given by the set $E_{N,k}$ defined in \pref{defEnk}. \\[1mm]
b) Every such extreme point is also exposed. 
  \end{thm}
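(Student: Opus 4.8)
The plan is to deduce part (a) from the integral representation of Theorem \ref{Choquet} together with the fact (already established in the finite-state case) that distinct $\frac{1}{N}$-quantized measures $\lambda$ give distinct $F_{N,k}(\lambda)$, and to obtain part (b) by exhibiting, for each $\lambda_0\in\PP_{\frac{1}{N}}(X)$, an explicit bounded continuous test function on $X^k$ that is uniquely minimized at $F_{N,k}(\lambda_0)$ among all $N$-representable $k$-plans. First I would record that $E_{N,k}\subseteq\PPnr(X^k)$ (immediate from \eqref{defEnk} and Theorem \ref{T:expan}), so it suffices to show (i) every $\mu_k\in E_{N,k}$ is exposed in $\PPnr(X^k)$ — which simultaneously gives extremality — and (ii) every extreme point of $\PPnr(X^k)$ lies in $E_{N,k}$. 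Claim (ii) is the easy direction: by Theorem \ref{Choquet}, an arbitrary $\mu_k\in\PPnr(X^k)$ is $\int_{\PP_{\frac{1}{N}}(X)} F_{N,k}(\lambda)\,d\alpha(\lambda)$ for some $\alpha\in\PP(\PP_{\frac{1}{N}}(X))$; if $\mu_k$ is extreme, a standard argument (if $\alpha$ is not a Dirac mass, split it into $\alpha = t\alpha_1 + (1-t)\alpha_2$ with $\alpha_1\neq\alpha_2$, producing a nontrivial convex decomposition of $\mu_k$ via the affine, injective-enough map $\alpha\mapsto\int F_{N,k}\,d\alpha$) forces $\alpha=\delta_{\lambda_0}$, hence $\mu_k=F_{N,k}(\lambda_0)\in E_{N,k}$. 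To make this rigorous one needs that $\mu_k=\int F_{N,k}\,d\alpha_1 = \int F_{N,k}\,d\alpha_2$ cannot hold for $\alpha_1\ne\alpha_2$ when the decomposition is genuine — this is where one uses that the barycenter map restricted to point masses is injective on $\PP_{\frac{1}{N}}(X)$, which follows from Corollary \ref{C:extfinite} (applied to a large enough finite sub-state-space containing the supports) or directly from the fact that $F_{N,k}(\lambda)$ has one-point marginal $\lambda$.

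For part (b), the main work is constructing the exposing functional. Fix $\lambda_0=\frac1N\sum_{i=1}^N\delta_{y_i}\in\PP_{\frac{1}{N}}(X)$. The idea is to build a functional that "sees" the one-point marginal: since $F_{N,k}(\lambda)$ has one-point marginal $\lambda$ (Corollary \ref{C:extfinite}), and since on $\PP_{\frac{1}{N}}(X)$ the quadratic constraint characterization of Lemma \ref{L:closed}(a) pins down $\lambda$ among probability measures via finitely many quadratic inequalities, one expects a quadratic (degree $\le 2$) test function in the $k$-plan variables — pushed through the marginal map — to do the job. Concretely, pick $\psi\in C_b(X)$ strictly convex-like with $\int\psi\,d\lambda$ separating: e.g. choose $g\in C_b(X;\R^m)$ embedding (the compact relevant part of) $X$, and set $\varphi(x_1,\dots,x_k) = \bigl|\frac{1}{k}\sum_{j=1}^k g(x_j) - \frac1N\sum_i g(y_i)\bigr|^2$, or better, a function designed so that $\int\varphi\,dM_k\gamma$ depends only on the first- and second-order moments of $M_1\gamma$ and $M_2\gamma$ and is minimized precisely when $M_1\gamma=\lambda_0$. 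Using that on the extreme points the two-point marginal is the determined quantity $F_{N,2}(\lambda)$ and Lemma \ref{L:closed}(a), one arranges $\int_{X^k}\varphi\,d\nu \ge 0$ with equality iff $M_1\nu=\lambda_0$ among $N$-representable $\nu$; then among $N$-representable $k$-plans with fixed one-point marginal $\lambda_0$, a second term in $\varphi$ (again quadratic, involving $M_2$) forces $\nu=F_{N,k}(\lambda_0)$ since, by Corollary \ref{C:extfinite}, $F_{N,k}(\lambda_0)$ is the unique extreme point — equivalently the unique $N$-representable $k$-plan whose barycenter-decomposition $\alpha$ is $\delta_{\lambda_0}$. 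A cleaner route: by the bijective correspondence $\nu\leftrightarrow\alpha_\nu$ hinted at through $\alpha=\Lambda_\#\mu$ in the $k=N$ case, pull back to a strictly convex functional on $\PP(\PP_{\frac{1}{N}}(X))$ exposing $\delta_{\lambda_0}$ and compose with the affine map — one only needs $\varphi$ of the form $\varphi = \psi\circ(\text{marginal data})$ with $\psi$ chosen strictly convex.

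The step I expect to be the main obstacle is the explicit exposure in (b): ensuring the test function is genuinely in $C_b(X^k)$ (bounded!) on a possibly non-compact Polish $X$, while still being strictly separating. Tightness is not automatic here. I would handle this by first reducing to the case where all relevant measures are supported in a fixed compact set — but that fails in general, so instead I would exploit that $F_{N,k}(\lambda_0)$ has compact support $\{y_1,\dots,y_N\}^k$, choose $\varphi$ bounded and equal to $0$ on that support and strictly positive elsewhere on the support of any competing extreme point, then argue that any $N$-representable $\nu$ with $\int\varphi\,d\nu=0$ must be supported where $\varphi=0$, hence (via the representation \eqref{choquetformula}) $\alpha_\nu$ is supported on quantized measures built from the $y_i$, and finally invoke the finite-state-space result Corollary \ref{C:extfinite} on the finite space $\{y_1,\dots,y_N\}$ to conclude $\nu=F_{N,k}(\lambda_0)$. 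Bounding $\varphi$ away from $0$ uniformly on the (compact) supports of the other extreme points associated to $\lambda\ne\lambda_0$ with $\spt\lambda\subseteq\{y_i\}$ is a finite check; the genuinely delicate point is ruling out $N$-representable $\nu$ that put mass where $\varphi>0$ yet still integrate to something not detected — which is why $\varphi\ge 0$ with the zero set exactly the desired support is the crucial design constraint.
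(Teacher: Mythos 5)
Your part a) follows the paper's own route: represent the extreme point via Theorem \ref{Choquet} and show the mixing measure must be a Dirac mass, using that $M_1F_{N,k}(\lambda)=\lambda$. One caveat: an arbitrary split $\alpha=t\alpha_1+(1-t)\alpha_2$ with $\alpha_1\ne\alpha_2$ does \emph{not} by itself yield two distinct mixtures (the map $\alpha\mapsto\int F_{N,k}\,\mbox{d}\alpha$ is genuinely non-injective for $k<N$, as the paper's $k=2$ example shows, and distinct $\alpha_i$ can share the same barycenter). The paper fixes this by splitting along a level set $\{\lambda:\int_X\varphi\,\mbox{d}\lambda>t\}$ of a scalar test function, which guarantees the two conditional pieces have different barycenters, so that equality of the mixtures (forced by extremality) contradicts $M_1F_{N,k}(\lambda)=\lambda$. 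That refinement is missing from your sketch but is the right completion of the idea you state.

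Part b) is where there is a genuine gap. Your concrete plan -- take $\varphi\ge 0$ vanishing exactly on $\{y_1,\ldots,y_N\}^k$, deduce that any competitor $\nu$ with $\int\varphi\,\mbox{d}\nu=0$ is supported there, and then ``invoke Corollary \ref{C:extfinite} to conclude $\nu=F_{N,k}(\lambda_0)$'' -- does not conclude. Being $N$-representable and supported in $\{y_1,\ldots,y_N\}^k$ identifies a whole polytope of measures, not a point: it contains $F_{N,k}(\theta)$ for every $\tfrac1N$-quantized $\theta$ with atoms among the $y_i$, all their convex mixtures, and also e.g.\ $\lambda_0^{\otimes k}$ (which is $N$-representable via $\lambda_0^{\otimes N}$). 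Corollary \ref{C:extfinite} only classifies the extreme points of that polytope; it does not single out $F_{N,k}(\lambda_0)$. So the exposing functional must in addition strictly separate $F_{N,k}(\lambda_0)$ from all other extreme points $F_{N,k}(\theta)$, in particular those with the \emph{same} support but different weights -- this is exactly the ``second quadratic term'' you gesture at but never construct, and it is the heart of the matter. Moreover, even granting a separating functional on the finite sub-polytope (vertices of polytopes are exposed), adding it to the support-pinning term requires an inequality valid for \emph{all} $\theta\in\PP_{\frac1N}(X)$, not just those supported in $\{y_1,\ldots,y_N\}$: extreme points $F_{N,k}(\theta)$ with support close to but not inside $\{y_i\}$ make both terms small, and one must control their relative size. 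The paper resolves both issues at once by choosing $u\in C_b(X,\R^\ell)$ with $u(y_1),\ldots,u(y_\ell)$ linearly independent, exploiting concavity of $J(\theta)=\tfrac12\int_{X\times X}|u(x)-u(y)|^2\,\mbox{d}\theta\,\mbox{d}\theta$ and its linearization $f_\lambda$ to build $\varphi_1$ satisfying $\int\varphi_1\,\mbox{d}F_{N,k}(\lambda)\le\int\varphi_1\,\mbox{d}F_{N,k}(\theta)$ for \emph{every} quantized $\theta$, with equality only if $\int u\,\mbox{d}\theta=\int u\,\mbox{d}\lambda$ (this uses that $M_2F_{N,k}(\theta)$ is given by \pref{mu2} and that $|u(x)-u(y)|^2$ vanishes on the diagonal); adding the support-pinning $\varphi_0$ then forces $\theta=\lambda$ in the equality case, with no scaling argument needed. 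Your proposal identifies the support-pinning half correctly but leaves the weight-separating half, which is the actual content of exposedness, unproved.
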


  \begin{proof}
 a) Let $\mu$ be an extreme point of $\PPnr(X^k)$.  Let us write $\mu$ as in \pref{choquetformula} and prove that $\alpha$ is a Dirac mass. If this was not the case, we could find $\varphi\in C_b(X)$ and $t\in \R$ such that  $A_1:=\{\lambda \in \PP(X) \; : \; \int_X \varphi \, \mbox{d} \lambda>t\}$ and $A_2:=\{\lambda \in \PP(X) \; : \; \int_X \varphi \, \mbox{d} \lambda \leq t\}$ satisfy $\alpha(A_1)>0$ and $\alpha(A_2)>0$. Then decomposing $\alpha$ as $\alpha(A_1)\alpha_1 + \alpha(A_2)\alpha_2$ with $\alpha_i$ a probability measure giving full mass to $\PP_{\frac{1}{N}}(X)\cap A_i$, the extremality of $\mu$ would imply
 \[\int_{\PP_{\frac{1}{N}}(X)} F_{N, k}(\lambda)\, \mbox{d} \alpha_1(\lambda)=\int_{\PP_{\frac{1}{N}}(X)}  F_{N, k}(\lambda) \, \mbox{d} \alpha_2(\lambda). \]
 But recalling that for $\lambda \in \PP_{\frac{1}{N}}(X)$ we have $M_1 F_{N,k}(\lambda)=\lambda$, this would also give
 \[\int_{\PP_{\frac{1}{N}}(X)} \lambda \, \mbox{d} \alpha_1(\lambda)=\int_{\PP_{\frac{1}{N}}(X)}  \lambda \, \mbox{d} \alpha_2(\lambda), \]
 and integrating $\varphi$ against this identity would lead to a contradiction. Extreme points of $\PPnr(X^k)$ therefore belong to $E_{N,k}$

The reverse inclusion follows from b). Alternatively, it follows from Theorem \ref{theo-extremeabstract}, for if $\mu=F_{N,k}(\lambda)$ for some $\lambda\in\PP_{\frac{1}{N}}(X)$, and $\mu$ is a strict convex combination of two measures $\mu^1\neq \mu^2$ in $\PPnr(X^k)$, then the support of the $\mu^i$ must be contained the support of $\mu$ and thus -- a fortiori -- in $(\rm{supp}\, \lambda)^k$, contradicting the extremality of $F_{N,k}(\lambda)$ among $N$-representable $k$-point measures supported on the finite state space $({\rm supp}\, \lambda)^k$. 

b) We need to show that if $\lambda:=\frac{1}{N} \sum_{i=1}^N \delta_{x_i}\in \PP_{\frac{1}{N}}(X)$, then $\mu:=F_{N,k}(\lambda)$ is an exposed point of $\PPnr(X^k)$. Again thanks to the integral representation \pref{choquetformula}, this amounts to finding $\varphi\in C_b(X^k)$ such that $\int_{X^k} \varphi \mbox{d} \mu <\int_{X^k} \varphi \mbox{d} \nu$ for every $\nu \in E_{N,k}\setminus\{\mu\}$.

  \smallskip
  
  First, let us rewrite $\lambda:=\sum_{j=1}^\ell \lambda_j  \delta_{y_j}$ with $\lambda_j>0$ and $y_1, \ldots, y_\ell$ pairwise distinct. Next, we define
  \[\varphi_0(z_1, \ldots, z_k):= \sum_{i=1}^k \min_{j=1, \ldots, \ell} d(z_i, y_j), \; \forall (z_1, \ldots, z_k)\in X^k.\]
 Let $u$ be a function in $C_b(X, \R^\ell)$ such that $u(y_1), \ldots, u(y_\ell)$ are linearly independent. For every $\theta\in \PP(X)$, let us now define
 \[\begin{split}
 J(\theta):&=\frac{1}{2} \int_{X\times X} \vert u(x)-u(y)\vert^2 \mbox{d} \theta(x) \mbox{d}\theta(y)\\ 
 &= \int_X \vert u(x)\vert^2 \mbox{d} \theta(x)- \Big \vert \int_X u(x) \mbox{d} \theta(x)\Big\vert^2
 \end{split}\]
 Obviously $J$ is a concave quadratic functional and more precisely, defining
 \begin{equation}\label{defflamb}
 f_{\lambda}(x):=\vert u(x)\vert^2-  2 \Big(\int_X  u(y) \mbox{d} \lambda(y)\Big) \cdot u(x), \; \forall x\in X,
 \end{equation}
 for every $\theta\in \PP(X)$ we have  
  \[J(\theta)=J(\lambda)+\int_X f_{\lambda}(x) \mbox{d}(\theta-\lambda)(x)-\Big \vert   \int_X  u(x) \mbox{d} (\theta-\lambda)(x) \Big\vert^2.\]
  In particular, this implies that
  \begin{equation}\label{concavineqJ}
  J(\theta)\le J(\lambda)+\int_X f_{\lambda}(x) \mbox{d}(\theta-\lambda)(x)
  \end{equation}
  and this inequality is strict unless 
 \begin{equation}\label{equalitycase}
  \int_X u \mbox{d} \theta=\int_X u \mbox{d} \lambda=\sum_{j=1}^l  \lambda_j u(y_j).
  \end{equation}

  Now if $\theta\in \PP_{\frac{1}{N}}(X)$ and $\nu:=F_{N,k}(\theta)$, we know (see \pref{mu2}) that
  \[M_2 \nu=\frac{N}{N-1} \theta \otimes \theta -\frac{1}{N-1} \id^{\otimes 2}_\# \theta, \; M_1 \nu=\theta\]
  since $(x,y)\in X^2 \mapsto \vert u(x)-u(y)\vert^2$ vanishes on the diagonal and thanks to \pref{concavineqJ}, we thus get
  \[\begin{split}
\frac{N}{N-1} J(\theta)&=  \frac{1}{2} \int_{X^k} \vert u(z_1)-u(z_2)\vert^2 \mbox{d} \nu(z_1, \ldots, z_k) \\
%  = \frac{N-1}{N} \Big( J(\lambda)+\int_X f_{\lambda}(x) \mbox{d}(\theta-\lambda)(x)-\Big \vert \int_X \vert u(x) \mbox{d} (\theta-\lambda)(x) \Big\vert^2  \Big)\\
&\leq     \frac{1}{2} \int_{X^k} \vert u(z_1)-u(z_2)\vert^2 \mbox{d} \mu(z_1, \ldots, z_k)\\
& +\frac{N}{N-1} \int_{X^k} f_{\lambda}(z_1) \mbox{d} (\nu-\mu)(z_1, \ldots, z_k)
  \end{split}\]  
 and the last inequality is strict unless \pref{equalitycase} holds. Defining
 \[\varphi_1(z_1, \ldots, z_k):=-\frac{1}{2} \vert u(z_1)-u(z_2)\vert^2 +\frac{N}{N-1} f_{\lambda}(z_1), \;   \forall (z_1, \ldots, z_k)\in X^k\]
 we deduce that $\int_{X^k} \varphi_1 \mbox{d} \mu \le \int_{X^k} \varphi_1 \mbox{d} \nu, \; \forall \nu \in E_{N,k}$. Since $\int_{X^k} \varphi_0 \mbox{d} \mu=0$ and $\varphi_0\geq 0$, setting $\varphi:=\varphi_0+\varphi_1$ we also have
 \begin{equation}\label{minimu}
 \int_{X^k} \varphi \mbox{d} \mu \le \int_{X^k} \varphi \mbox{d} \nu, \; \forall \nu \in E_{N,k}.
 \end{equation}
 It remains to show that if  $\theta \in \PP_{\frac{1}{N}}(X)$ is such that   \pref{minimu} is an equality at $\nu=F_{N,k}(\theta)$ then necessarily $\theta=\lambda$. But in this equality case we must have $\int_{X^k} \varphi_0 \mbox{d} \nu=0$ so that $\theta= M_1 \nu$ is supported by the finite set $\{y_1, \ldots y_\ell\}$ (i.e. $\theta=\sum_{j=1}^\ell \theta_j \delta_{y_j}$), but we must also have an equality in \pref{concavineqJ}. The latter implies that $\sum_{j=1}^\ell \theta_j u(y_j)=\sum_{j=1}^\ell \lambda_j u(y_j)$, i.e. $\lambda_j=\theta_j$ for  $j=1, \ldots, \ell$, since we have chosen $u(y_1), \ldots, u(y_\ell)$ linearly independent. 
\end{proof}

Let us point out the following consequence of Theorem \ref{extexp} a). 

\begin{coro} The set of extreme points of $\PPnr(X^k)$ is closed under narrow convergence. 
\end{coro}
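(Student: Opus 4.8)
The plan is to show that $E_{N,k}$, which by Theorem~\ref{extexp}~a) coincides with the set of extreme points of $\PPnr(X^k)$, is closed under narrow convergence. So suppose $(\mu_\nu)_{\nu\in\N}$ is a sequence in $E_{N,k}$ converging narrowly to some $\mu\in\PP(X^k)$; I must produce $\lambda\in\PP_{\frac1N}(X)$ with $\mu=F_{N,k}(\lambda)$. First I would pass to the one-point marginals: set $\lambda_\nu := M_1\mu_\nu$, so that $\mu_\nu = F_{N,k}(\lambda_\nu)$ with $\lambda_\nu\in\PP_{\frac1N}(X)$. Since $M_1$ is narrowly continuous, $\lambda_\nu\to\lambda:=M_1\mu$ narrowly. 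By Lemma~\ref{L:closed}~b), $\PP_{\frac1N}(X)$ is closed under narrow convergence, hence $\lambda\in\PP_{\frac1N}(X)$.

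Next I would use the narrow-to-narrow continuity of the polynomial map $\lambda\mapsto F_{N,k}(\lambda)$, already established in the proof of Theorem~\ref{Choquet} (it follows at once from formulae \eqref{expan}--\eqref{coeffP} and the continuity of $\lambda\mapsto\lambda^{\otimes j}$ and $\lambda\mapsto\id^{\otimes j}_\#\lambda$). Applying this continuity to $\lambda_\nu\to\lambda$ gives $F_{N,k}(\lambda_\nu)\to F_{N,k}(\lambda)$ narrowly. But $F_{N,k}(\lambda_\nu)=\mu_\nu\to\mu$, and narrow limits are unique, so $\mu=F_{N,k}(\lambda)$ with $\lambda\in\PP_{\frac1N}(X)$, i.e. $\mu\in E_{N,k}$. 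Invoking Theorem~\ref{extexp}~a) once more identifies $E_{N,k}$ with $\ext(\PPnr(X^k))$ and finishes the proof.

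There is essentially no obstacle here: the corollary is a direct consequence of two facts proved earlier, namely the closedness of $\PP_{\frac1N}(X)$ under narrow convergence (Lemma~\ref{L:closed}~b)) and the narrow continuity of $F_{N,k}$ together with that of the marginal map $M_1$. The only point worth a word of care is to argue on the marginals rather than trying to work directly in $\PP(X^k)$ — one needs $\lambda_\nu$ to converge, and this is automatic from narrow continuity of $M_1$ applied to the convergent sequence $\mu_\nu$, after which the quantization of the limit is guaranteed by Lemma~\ref{L:closed}. Below is the argument written out.

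\begin{proof}
By Theorem~\ref{extexp}~a), the set of extreme points of $\PPnr(X^k)$ equals $E_{N,k}=\{F_{N,k}(\lambda)\,:\,\lambda\in\PP_{\frac{1}{N}}(X)\}$, so it suffices to show $E_{N,k}$ is closed under narrow convergence. Let $(\mu_\nu)_{\nu\in\N}$ be a sequence in $E_{N,k}$ converging narrowly to $\mu\in\PP(X^k)$, and write $\mu_\nu=F_{N,k}(\lambda_\nu)$ with $\lambda_\nu\in\PP_{\frac{1}{N}}(X)$. Since $M_1F_{N,k}(\lambda_\nu)=\lambda_\nu$ and $M_1$ is narrowly continuous, $\lambda_\nu=M_1\mu_\nu$ converges narrowly to $\lambda:=M_1\mu$. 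By Lemma~\ref{L:closed}~b), $\lambda\in\PP_{\frac{1}{N}}(X)$. The map $\theta\mapsto F_{N,k}(\theta)$ is continuous for narrow convergence, by \eqref{expan}--\eqref{coeffP} and the narrow continuity of $\theta\mapsto\theta^{\otimes j}$ and $\theta\mapsto\id^{\otimes j}_\#\theta$; hence $F_{N,k}(\lambda_\nu)$ converges narrowly to $F_{N,k}(\lambda)$. Since also $F_{N,k}(\lambda_\nu)=\mu_\nu\to\mu$ and narrow limits are unique, $\mu=F_{N,k}(\lambda)\in E_{N,k}$.
\end{proof}
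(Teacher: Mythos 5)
Your argument is correct and is essentially the same as the paper's own proof: identify the extreme points with $E_{N,k}$ via Theorem \ref{extexp} a), pass to the one-point marginals using the narrow continuity of $M_1$, invoke the closedness of $\PP_{\frac{1}{N}}(X)$ from Lemma \ref{L:closed}, and conclude with the narrow continuity of $F_{N,k}$. No gaps.
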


\begin{proof} We have to show that $E_{N,k}$ is closed under narrow convergence. Take a sequence $\mu^n = F_{N,k}(\lambda^n)\in E_{N,k}$ converging narrowly to some $\mu\in\PP(X^k)$. Since the one-point marginals $M_1\mu^n = \lambda^n$ belong to $\PP_{\frac{1}{N}}(X)$ and the marginal map $M_1$ is continuous under narrow convergence, $\lambda^n\to\lambda$ for some $\lambda\in\PP(X)$. By the closedness of $\PP_{\frac{1}{N}}(X)$ (see Lemma \ref{L:closed}) we have $\lambda\in\PP_{\frac{1}{N}}(X)$ and the continuity of $F_{N,k}$ yields $F_{N,k}(\lambda^n)\to F_{N,k}(\lambda)$, so $\mu=F_{N,k}(\lambda)\in E_{N,k}$. 
\end{proof}

In some applications, one is interested in the following subset of $\calP_{N-rep}(X^k)$ consisting of measures with no mass on the diagonal:
\begin{equation} \label{offdiag}
   \calP_{N-rep}^{\rm offdiag}(X^k) := \{ \mu\in\calP_{N-rep}(X^k) \, : \, \mu(diag_k(X))=0 \}
\end{equation}
where
\begin{equation} \label{diag}
   diag_k(X) := \{ (x,...,x)\in X^k \, : \, x\in X \}.
\end{equation}  
The motivation for considering this set comes from singular $k$-body interactions. These interactions lead to costs of the form $c(x_1,...,x_N)=\sum_{1\le i_1<...<i_k\le N}\Phi(x_{i_1},...,x_{i_k})$ for some measurable nonnegative $\Phi$ with $\Phi=+\infty$ on $diag_k(X)$, then the total cost 
$C[\gamma]=\int_{X^N}c\, d\gamma$ is infinite if $M_k\gamma\not\in \calP_{N-rep}^{\rm offdiag}(X^k)$. A prototypical example is the Coulomb cost, see Section \ref{sec-opti}, which led Khoo and Ying \cite{KhooYing} to introduce the set \eqref{offdiag}--\eqref{diag} in the case $k=2$ and $X$ a finite state space. Recall the quantization constraint for measures $\lambda\in\calP_{\frac{1}{N}}(X)$ that $\lambda(\{x\})\in\{0,1/N,...,1\}$. 
\begin{thm} \label{extexpoff} a) The set of extreme points of $\calP_{N-rep}^{\rm offdiag}(X^k)$ is given by 
\begin{equation} \label{ENkoffdiag}
   E_{N,k}^{\rm offdiag} := \Bigl\{ F_{N,k}(\lambda )\, : \, \lambda\in\calP_{\frac{1}{N}}(X) \, : \, 
       \lambda( \{ x\} )\in \{ 0,\tfrac{1}{N},...,\tfrac{k-1}{N} \} \Bigr\}. 
\end{equation}
b) Every such extreme point is also exposed. 
\end{thm}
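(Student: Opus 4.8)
The plan is to obtain the statement from Theorem~\ref{extexp} by observing that $\calP_{N-rep}^{\rm offdiag}(X^k)$ is a \emph{face} of $\PPnr(X^k)$. Indeed, $\mu\mapsto\mu(diag_k(X))$ is a nonnegative linear functional on $\PPnr(X^k)$ and $\calP_{N-rep}^{\rm offdiag}(X^k)$ is exactly its zero set; hence if $\mu\in\calP_{N-rep}^{\rm offdiag}(X^k)$ is written as $\mu=t\mu^1+(1-t)\mu^2$ with $\mu^1,\mu^2\in\PPnr(X^k)$ and $t\in(0,1)$, then $t\,\mu^1(diag_k(X))+(1-t)\,\mu^2(diag_k(X))=0$ with both terms $\ge 0$ forces $\mu^1,\mu^2\in\calP_{N-rep}^{\rm offdiag}(X^k)$. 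Two consequences follow immediately: (i) an element of $\ext(\PPnr(X^k))$ that happens to lie in $\calP_{N-rep}^{\rm offdiag}(X^k)$ is still extreme there (this uses only that $\calP_{N-rep}^{\rm offdiag}(X^k)$ is a subset); and (ii) conversely, every $\mu\in\ext(\calP_{N-rep}^{\rm offdiag}(X^k))$ belongs to $\ext(\PPnr(X^k))$, since any splitting $\mu=t\mu^1+(1-t)\mu^2$ inside $\PPnr(X^k)$ automatically takes place inside $\calP_{N-rep}^{\rm offdiag}(X^k)$ by the face property, and extremality of $\mu$ there gives $\mu^1=\mu^2=\mu$.

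Next I would identify $E_{N,k}\cap\calP_{N-rep}^{\rm offdiag}(X^k)$, where $E_{N,k}=\ext(\PPnr(X^k))$ by Theorem~\ref{extexp}a). Fix $\lambda=\tfrac1N\sum_{i=1}^N\delta_{x_i}\in\calP_{\frac{1}{N}}(X)$; by Theorem~\ref{T:expan} and Lemma~\ref{L:elid}, $F_{N,k}(\lambda)=M_kS_N\delta_{x_1,\dots,x_N}$ equals $\tfrac{(N-k)!}{N!}$ times the sum of $\delta_{x_{m_1}\dots x_{m_k}}$ over all pairwise distinct $(m_1,\dots,m_k)\in\{1,\dots,N\}^k$. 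Evaluating this measure on $diag_k(X)$ and grouping indices by their common value leaves only $k$-tuples of distinct indices all pointing to the same $a\in X$, so
\[
   F_{N,k}(\lambda)\bigl(diag_k(X)\bigr)=\frac{(N-k)!}{N!}\sum_{a\in X}m_a(m_a-1)\cdots(m_a-k+1),\qquad m_a:=\#\{i:x_i=a\}=N\lambda(\{a\}).
\]
This is a sum of nonnegative terms, vanishing if and only if $m_a\le k-1$ for every $a\in X$, i.e. if and only if $\lambda(\{x\})\in\{0,\tfrac1N,\dots,\tfrac{k-1}{N}\}$ for all $x\in X$. Hence $E_{N,k}\cap\calP_{N-rep}^{\rm offdiag}(X^k)=E_{N,k}^{\rm offdiag}$, and combining this with (i) and (ii) proves part a).

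For part b), let $\mu=F_{N,k}(\lambda)\in E_{N,k}^{\rm offdiag}\subseteq E_{N,k}$. By Theorem~\ref{extexp}b) there is $\varphi\in C_b(X^k)$ with $\int_{X^k}\varphi\,\mathrm d\mu<\int_{X^k}\varphi\,\mathrm d\nu$ for every $\nu\in\PPnr(X^k)\setminus\{\mu\}$. Since $\calP_{N-rep}^{\rm offdiag}(X^k)\setminus\{\mu\}\subseteq\PPnr(X^k)\setminus\{\mu\}$, the same $\varphi$ witnesses that $\mu$ is an exposed point of $\calP_{N-rep}^{\rm offdiag}(X^k)$. (As exposed points are extreme, this also independently yields the inclusion $E_{N,k}^{\rm offdiag}\subseteq\ext(\calP_{N-rep}^{\rm offdiag}(X^k))$ used in a).)

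I do not expect a genuine obstacle: everything reduces to Theorem~\ref{extexp} plus the elementary counting identity above, the only point requiring care being that this identity matches exactly the multiplicity constraint $\lambda(\{x\})\le\tfrac{k-1}{N}$ appearing in \eqref{ENkoffdiag}. A slightly longer alternative, staying closer to the proof of Theorem~\ref{extexp}a), would instead start from the de Finetti representation \eqref{choquetformula} of a given $\mu\in\calP_{N-rep}^{\rm offdiag}(X^k)$, observe that $0=\mu(diag_k(X))=\int_{\calP_{\frac{1}{N}}(X)}F_{N,k}(\lambda)(diag_k(X))\,\mathrm d\alpha(\lambda)$ — justified by approximating $\indi_{diag_k(X)}$ from above by bounded continuous functions — which forces $\alpha$ to be carried by the admissible set of $\lambda$'s, and then run the half-space splitting argument of Theorem~\ref{extexp}a) verbatim, checking that both resulting pieces remain in $\calP_{N-rep}^{\rm offdiag}(X^k)$.
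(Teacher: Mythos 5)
Your proof is correct, and for part a) it follows a genuinely different route from the paper. The paper argues via the integral representation \eqref{choquetformula}: given an extreme $\mu$ of $\calP_{N-rep}^{\rm offdiag}(X^k)$, it shows that the representing measure $\alpha$ must be carried by the admissible $\lambda$'s, using the factorization identity $F_{N,k}(\lambda)(\{(x,\dots,x)\})=\tfrac{N^{k-1}}{\prod_{j=1}^{k-1}(N-j)}\,\lambda(\{x\})\bigl(\lambda(\{x\})-\tfrac1N\bigr)\cdots\bigl(\lambda(\{x\})-\tfrac{k-1}{N}\bigr)$, obtained from the expansion \eqref{expan} together with the Vieta-type coefficient identity \eqref{ceq} and the quantization of $\lambda$; it then reruns the half-space splitting argument of Theorem \ref{extexp} a) inside the off-diagonal set. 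You instead observe that $\calP_{N-rep}^{\rm offdiag}(X^k)$ is a face of $\PPnr(X^k)$ (zero set of the nonnegative linear functional $\mu\mapsto\mu(diag_k(X))$, with $diag_k(X)$ closed hence Borel), so that its extreme points are exactly $\ext(\PPnr(X^k))\cap\calP_{N-rep}^{\rm offdiag}(X^k)$; this imports Theorem \ref{extexp} a) wholesale and avoids repeating the Choquet splitting, as well as any discussion of where $\alpha$ is concentrated. Your identification of the intersection uses the urn formula of Lemma \ref{L:elid} to compute $F_{N,k}(\lambda)(diag_k(X))=\tfrac{(N-k)!}{N!}\sum_a m_a(m_a-1)\cdots(m_a-k+1)$ with $m_a=N\lambda(\{a\})$, which is an elementary combinatorial counterpart of the paper's factorization identity and yields the same vanishing criterion $\lambda(\{x\})\le\tfrac{k-1}{N}$ (the numerical contents agree, since $\tfrac{N^{k-1}}{\prod_{j=1}^{k-1}(N-j)}\cdot\tfrac{m(m-1)\cdots(m-k+1)}{N^k}=\tfrac{(N-k)!}{N!}m(m-1)\cdots(m-k+1)$). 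What each approach buys: yours is shorter and makes transparent the abstract reason why intersecting with the diagonal constraint creates no new extreme points (the face property), while the paper's factorization exhibits the diagonal restriction of $F_{N,k}(\lambda)$ as an explicit falling-factorial polynomial in $\lambda(\{x\})$, a structural identity of independent interest tied to the coefficient sum rule. Part b) is handled identically in both: the exposing functional from Theorem \ref{extexp} b) restricts to the subset, and your remark that this also re-proves one inclusion of a) is sound.
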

Geometrically this means that intersecting the set of $N$-representable $k$-plans with the subspace of measures which vanish on the diagonal does not generate new extreme points.

Theorem \ref{extexpoff} generalizes a recent result of \cite{KhooYing} from $k=2$ and finite state spaces $X$ to general $k$ and $X$. 
\\[2mm]
\begin{proof}
First we show that any extreme point $\mu$ is contained in the set \eqref{ENkoffdiag}. By the representation \eqref{choquetformula}, $\mu=\int_A F_{N,k}(\lambda) \, d\alpha(\lambda)$ where $A=\{ \lambda\in\calP_{\frac{1}{N}}(X)\, : \, F_{N,k}(\lambda)(diag_k(X))=0\}$. The key point is that the polynomial $F_{N,k}(\lambda)$ factorizes on the diagonal: 
\begin{eqnarray} 
   F_{N,k}(\lambda ) (\{ (x,...,x)\} ) &=& \frac{N^{k-1}}{\prod_{j=1}^{k-1}(N-j)}\Bigl[ \lambda( \{ x\} )^k 
   + \sum_{j=1}^{k-1}\frac{(-1)^j}{N^j} \lambda( \{ x\} )^{k-j} c_j^{(k)} \Bigr] \nonumber \\
    & = &  \frac{N^{k-1}}{\prod_{j=1}^{k-1}(N-j)} \lambda( \{ x \} )\cdot \Bigl( \lambda( \{ x\} )-\tfrac{1}{N}\Bigr) \cdot ... \cdot \Bigl( \lambda( \{ x\} )-\tfrac{k-1}{N}\Bigr). \label{factorize}
\end{eqnarray}
The second equality follows because the coefficients $c_j^{(k)}$ are $N$-independent and satisfy 
$$
  N^k \Bigl( 1 - \frac{c_1^{(k)}}{N} + \frac{c_2^{(k)}}{N^2} -+... + (-1)^{k-1}
  \frac{c_{k-1}^{(k)}}{N^{k-1}}\Bigr) = \prod_{j=0}^{k-1} (N-j)
$$
for all nonnegative integers $N$, see eq.~\eqref{ceq}. Since $\lambda$ is $\tfrac{1}{N}$-quantized, $\lambda({x})N$ is a nonnegative integer and can be substituted for $N$, yielding the asserted expression. The factorization identity \eqref{factorize} shows that $A=\{\lambda\in\calP_{\frac{1}{N}}(X) \, : \, \lambda({x}) \in\{0,\tfrac{1}{N},...,\tfrac{k-1}{N}\}\}$.
The rest of the proof of a) is analogous to that of Theorem \ref{extexp} a) and the exposedness of the elements in $E_{N,k}^{\rm offdiag}$ is immediate from Theorem \ref{extexp} b).
%: the extremal probability measures $\alpha$ on $A$ are the Dirac measures, so the extreme points of $\calP_{N-rep}^{\rm offdiag}(X^k)$ are contained in $E_{N,k}^{\rm offdiag}$. The reverse inclusion is now clear from Theorem \ref{extexp} a), 

\end{proof}
\subsection{Exponentially convergent approximation of extreme points}

The error estimates \eqref{err}, \eqref{errp} when approximating $F_{N,k}(\lambda)$ by a truncated series can be extended in a straightforward manner to Polish spaces.   
  
\begin{thm} \label{T:expon} For any extreme point $\mu_k$ of $\PPnr(X^k)$, denoting its one-point marginal $M_1\mu_k\in\PP_{\frac{1}{N}}(X)$ by $\lambda$ we have
\begin{equation} \label{errtilde}
   \mu_k = \frac{N^{k-1}}{ \prod_{j=1}^{k-1}(N-j)} \left[ \lambda^{\otimes k} + \eps_{N,k}(\lambda)\right] \mbox{ with }  \Vert \eps_{N, k} (\lambda)\Vert_{\TV} \le \frac{C_k}{N}
\end{equation}
and 
\begin{align} 
   \mu_k = & \frac{N^{k-1}}{ \prod_{j=1}^{k-1}(N-j)} \left[ \lambda^{\otimes k} + \sum_{j=1}^p  \frac{(-1)^j}{N^j} S_k \, P_j^{(k)}(\lambda ) + 
\eps_{N,k,p}(\lambda)\right] \nonumber \\ & \; \mbox{ with }  \Vert \eps_{N, k,p} (\lambda)\Vert_{\TV} \le \frac{C_k}{N^{p+1}}, \label{errptilde}
\end{align}
with constants $C_k$ independent of $N$ and $p$. The explicit constants \eqref{Ck} will do. Conversely, for every $\lambda\in\PP_{\frac{1}{N}}(X)$ there exists an extreme point $\mu_k$ of $\PPnr(X^k)$ such that \eqref{errtilde}, \eqref{errptilde} hold.  
\end{thm}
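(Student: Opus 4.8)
The plan is to reduce the statement to the exact polynomial identity already proved and then estimate the discarded tail in total variation. First I would invoke Theorem \ref{extexp} a): every extreme point $\mu_k$ of $\PPnr(X^k)$ is of the form $F_{N,k}(\lambda)$ with $\lambda = M_1\mu_k\in\PP_{\frac{1}{N}}(X)$, and writing $\lambda=\frac{1}{N}\sum_{i=1}^N\delta_{x_i}$ we have $\mu_k = M_k S_N\delta_{x_1,\ldots,x_N}$. Although Theorem \ref{T:expan} is stated for a finite state space, its proof (Lemmas \ref{L:elid}, \ref{L:recur} and Propositions \ref{repfinite}, \ref{P:almostewens}) only manipulates the finitely many points $x_1,\ldots,x_N$, so it applies verbatim with $X$ replaced by $\mathrm{supp}\,\lambda$; equivalently, since $F_{N,k}$ was defined on all of $\PP(X)$ by formulae \eqref{expan}--\eqref{coeffP} in Section \ref{sec-cts}, the identity $\mu_k=\frac{N^{k-1}}{\prod_{j=1}^{k-1}(N-j)}\bigl[\lambda^{\otimes k}+\sum_{j=1}^{k-1}\frac{(-1)^j}{N^j}S_k P_j^{(k)}(\lambda)\bigr]$ is precisely Theorem \ref{T:expan} read through that definition. (This reduction was already used in the proof of Theorem \ref{Choquet}.)

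Given this identity, both \eqref{errtilde} and \eqref{errptilde} amount to bounding the $\TV$-norm of the terms that are dropped. Here I would use the sum rule \eqref{coeff}: each $P_j^{(k)}(\lambda)$ is a nonnegative measure with total mass $c_j^{(k)}$, and $S_k$ preserves nonnegativity and total mass, so $\|S_k P_j^{(k)}(\lambda)\|_{\TV}=c_j^{(k)}$. Retaining only $\lambda^{\otimes k}$, the error is $\eps_{N,k}(\lambda)=\sum_{j=1}^{k-1}\frac{(-1)^j}{N^j}S_k P_j^{(k)}(\lambda)$, hence, since $N\ge 2$ and $j\ge 1$,
$$\|\eps_{N,k}(\lambda)\|_{\TV}\le\sum_{j=1}^{k-1}\frac{c_j^{(k)}}{N^j}\le\frac{1}{N}\sum_{j=1}^{k-1}c_j^{(k)}=\frac{C_k}{N},$$
and keeping the first $p$ corrections leaves $\eps_{N,k,p}(\lambda)=\sum_{j=p+1}^{k-1}\frac{(-1)^j}{N^j}S_k P_j^{(k)}(\lambda)$, so that
$$\|\eps_{N,k,p}(\lambda)\|_{\TV}\le\frac{1}{N^{p+1}}\sum_{j=p+1}^{k-1}c_j^{(k)}\le\frac{C_k}{N^{p+1}},$$
with $C_k=\sum_{j=1}^{k-1}c_j^{(k)}$ as in \eqref{Ck}, manifestly independent of $N$, $p$ and of the cardinality of $\mathrm{supp}\,\lambda$.

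For the converse I would run the argument backwards: given any $\lambda=\frac{1}{N}\sum_{i=1}^N\delta_{x_i}\in\PP_{\frac{1}{N}}(X)$, the measure $\mu_k:=M_k S_N\delta_{x_1,\ldots,x_N}=F_{N,k}(\lambda)$ belongs to $E_{N,k}$ and is therefore an extreme point of $\PPnr(X^k)$ by Theorem \ref{extexp} a), and the two displays \eqref{errtilde}, \eqref{errptilde} hold for it by the same identity and the same tail estimate.

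There is essentially no hard step: the content is entirely contained in Theorem \ref{T:expan} and the mass identity \eqref{coeff}, both already established, together with the triangle inequality for the total variation norm. The only point deserving a word of care is the passage from a finite state space to a Polish one, namely the observation that the polynomial identity for $\mu_k$ is \emph{local}, involving only $\mathrm{supp}\,\lambda$; I would state this explicitly rather than re-derive anything.
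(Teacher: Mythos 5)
Your proposal is correct and follows essentially the same route as the paper: identify the extreme point as $F_{N,k}(\lambda)$ via Theorem \ref{extexp}, reduce to the finite-state-space identity of Theorem \ref{T:expan} on $\mathrm{supp}\,\lambda$, and bound the truncated tail in total variation using the sum rule \eqref{coeff}. The only difference is that you spell out the tail estimate underlying \eqref{err}--\eqref{errp} explicitly, which the paper leaves implicit.
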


\begin{proof} By Theorem \ref{extexp}, an extreme point $\mu_k$ equals $F_{N,k}(\lambda)$, and in particular is supported on the finite set $({\rm supp}\,\lambda)^k$. Estimates \eqref{errtilde}, \eqref{errptilde} now follow from \eqref{err}--\eqref{errp} and the fact that
$$
   ||\eps_{N,k}(\lambda)||_{TV(X^k)} = ||\eps_{N,k}(\lambda)||_{TV((\mbox{\scriptsize supp}\,\lambda)^k)}
$$  
(and the analogous identity for $\eps_{N,k,p}$).
\end{proof}  

Of course, via the integral representation \eqref{choquetformula} this translates into an analogous exponentially convergent approximation result for arbitrary (non-extremal) $N$-representable measures, see the next section. 
  
\section{Recovering the de Finetti-Hewitt-Savage theorem}\label{sec-hs}

%Our aim now is to recover some (known) results on exchangeable finite or infinite sequences from the Choquet representation from Theorem \ref{Choquet}. 

The celebrated Hewitt-Savage theorem \cite{Hewitt-Savage} (going back to de Finetti in the special case of the state space $X=\{0,1\}$) says the following.

\begin{thm} \label{T:HS} {\rm (Hewitt-Savage \cite{Hewitt-Savage})} If $(Z_n)_n$ is an infinite exchangeable  sequence of random variables with values in the Polish space $X$, then there exists $\alpha \in \PP(\PP(X))$ such that for every $k$, the law $\mu_k$ of $(Z_1, \ldots, Z_k)$ is given by
\begin{equation}\label{hsrep}
\mu_k =\int_{\PP(X)} \lambda^{\otimes k} \; \mbox{ d} \alpha(\lambda).
\end{equation}
\end{thm}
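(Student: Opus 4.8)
(Outline of the intended argument.) The plan is to recover \eqref{hsrep} as the limit $N\to\infty$ of the finite representation of Theorem \ref{Choquet}. Since $(Z_n)_n$ is infinitely exchangeable, for every $N$ the law $\mu_N\in\PPs(X^N)$ of $(Z_1,\ldots,Z_N)$ is in particular symmetric, hence trivially $N$-representable, so Theorem \ref{Choquet} applied with $k=N$ provides $\alpha_N\in\PP(\PP(X))$ with $\alpha_N(\PP_{\frac{1}{N}}(X))=1$ and $\mu_N=\int_{\PP_{\frac{1}{N}}(X)}F_{N,N}(\lambda)\,d\alpha_N(\lambda)$. Applying the linear, narrowly continuous map $M_k$ to this identity, and using $M_k\mu_N=\mu_k$ together with $M_kF_{N,N}(\lambda)=F_{N,k}(\lambda)$ for $\lambda\in\PP_{\frac{1}{N}}(X)$ (which holds because, writing $\lambda=\tfrac1N\sum_i\delta_{x_i}$, Theorem \ref{T:expan} gives $F_{N,N}(\lambda)=S_N\delta_{(x_1,\ldots,x_N)}$ and $M_kS_N\delta_{(x_1,\ldots,x_N)}=F_{N,k}(\lambda)$), I would obtain
\begin{equation} \label{HSfiniterep}
   \mu_k=\int_{\PP_{\frac{1}{N}}(X)}F_{N,k}(\lambda)\,d\alpha_N(\lambda)\quad\text{for all }N\ge k.
\end{equation}

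The first substantive step is to prove that the family $\{\alpha_N\}_N$ is tight in $\PP(\PP(X))$. Taking one-point marginals in \eqref{HSfiniterep} and using $M_1F_{N,k}(\lambda)=\lambda$ for $\lambda\in\PP_{\frac{1}{N}}(X)$ shows that all the $\alpha_N$ have the common barycenter $\mu_1:=M_1\mu_k\in\PP(X)$, i.e.\ $\mu_1(A)=\int\lambda(A)\,d\alpha_N(\lambda)$ for every Borel $A\subseteq X$. Fix $\eta>0$; by tightness of $\mu_1$ pick compact $K_m\subseteq X$ with $\mu_1(X\setminus K_m)\le\eta^2 4^{-m}$ and set $\mathcal K:=\{\lambda\in\PP(X):\lambda(X\setminus K_m)\le\eta 2^{-m}\text{ for all }m\ge1\}$, which is closed (each condition $\lambda(K_m)\ge 1-\eta 2^{-m}$ defines a closed set, by upper semicontinuity of $\lambda\mapsto\lambda(K_m)$ for $K_m$ closed) and tight, hence compact. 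A Markov inequality then gives
\begin{equation*}
  \alpha_N(\PP(X)\setminus\mathcal K)\le\sum_{m\ge1}\alpha_N\bigl(\{\lambda:\lambda(X\setminus K_m)>\eta 2^{-m}\}\bigr)\le\sum_{m\ge1}\frac{2^m}{\eta}\,\mu_1(X\setminus K_m)\le\sum_{m\ge1}\eta 2^{-m}=\eta,
\end{equation*}
uniformly in $N$. By Prokhorov's theorem some subsequence $(\alpha_{N_j})_j$ converges narrowly to a limit $\alpha\in\PP(\PP(X))$.

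It then remains to pass to the limit in \eqref{HSfiniterep} along this one subsequence, obtaining \eqref{hsrep} for all $k$ simultaneously with the \emph{single} prior $\alpha$. Fix $k$ and $\varphi\in C_b(X^k)$; for $j$ large enough $N_j\ge k$, so \eqref{HSfiniterep} reads $\int_{X^k}\varphi\,d\mu_k=\int_{\PP(X)}\Phi_j(\lambda)\,d\alpha_{N_j}(\lambda)$ with $\Phi_j(\lambda):=\int_{X^k}\varphi\,dF_{N_j,k}(\lambda)$. By the error bound \eqref{errtilde} of Theorem \ref{T:expon} (valid for $\lambda\in\PP_{\frac{1}{N}}(X)$, since there $F_{N,k}(\lambda)$ is an extreme point of $\PPnr(X^k)$ with one-point marginal $\lambda$), one has $\sup_{\lambda\in\PP_{\frac{1}{N}}(X)}\|F_{N,k}(\lambda)-\lambda^{\otimes k}\|_{\TV}\to0$ as $N\to\infty$, hence $\sup_{\lambda\in\PP_{\frac{1}{N_j}}(X)}|\Phi_j(\lambda)-\Phi(\lambda)|\to0$, where $\Phi(\lambda):=\int_{X^k}\varphi\,d\lambda^{\otimes k}$ defines a bounded, narrowly continuous function of $\lambda$. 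Since $\alpha_{N_j}$ is carried by $\PP_{\frac{1}{N_j}}(X)$ and converges narrowly to $\alpha$, combining the uniform convergence $\Phi_j\to\Phi$ with $\Phi\in C_b(\PP(X))$ yields $\int\Phi_j\,d\alpha_{N_j}\to\int\Phi\,d\alpha$, so $\int_{X^k}\varphi\,d\mu_k=\int_{\PP(X)}\bigl(\int_{X^k}\varphi\,d\lambda^{\otimes k}\bigr)d\alpha(\lambda)$. As $\varphi$ and $k$ are arbitrary, this is precisely \eqref{hsrep}.

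The main obstacle is the tightness of $\{\alpha_N\}$: because $X$ is only assumed Polish (not compact), $\PP(X)$ need not be compact, so the existence of a narrow limit of the priors is not automatic and must be deduced from the fact that all the $\alpha_N$ share the common, tight barycenter $\mu_1$. Once that is secured, the passage to the limit is routine: the uniform-in-$\lambda$ total variation estimate for $F_{N,k}(\lambda)-\lambda^{\otimes k}$ lets us replace the $N$-dependent integrand in \eqref{HSfiniterep} by the $N$-independent map $\lambda\mapsto\lambda^{\otimes k}$ at the cost of an error that vanishes as $N\to\infty$.
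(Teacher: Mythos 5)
Your proof is correct, and it follows the paper's overall strategy -- recover Hewitt--Savage from the finite representation of Theorem \ref{Choquet} by letting $N\to\infty$, with tightness of the priors deduced from their common barycenter $\mu_1$ and the limit passage handled by the uniform total variation bound \eqref{errtilde} -- but it differs in two steps, one of which genuinely streamlines the argument. The paper applies Theorem \ref{Choquet} to $\mu_k$ itself, obtaining priors $\alpha^{N,k}$ and, after $N\to\infty$, a $k$-dependent limit $\alpha^k$; it then needs a second stage (using $\mu_k=M_k\mu_\ell$, $M_k\lambda^{\otimes\ell}=\lambda^{\otimes k}$, and a further tightness/subsequence extraction in $\ell$) to produce a single $k$-independent $\alpha$. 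You instead represent the full law $\mu_N$ of $(Z_1,\ldots,Z_N)$ at level $k=N$ and push the identity down with $M_k$, using $M_kF_{N,N}(\lambda)=F_{N,k}(\lambda)$ for $\lambda\in\PP_{\frac{1}{N}}(X)$, so the same sequence $(\alpha_N)$ serves all $k$ simultaneously and a single subsequence extraction suffices -- a cleaner treatment of the ``one $\alpha$ for every $k$'' issue. The other difference is a matter of implementation: for tightness the paper integrates a coercive lower semicontinuous $\Psi$ with $\int_X\Psi\,d\mu_1<\infty$ and uses coercivity of $\lambda\mapsto\int_X\Psi\,d\lambda$ on $\PP(X)$, whereas you build an explicit compact set of measures from compacts $K_m$ and Markov's inequality; both are standard and correct (in your version, just note that the barycenter identity $\mu_1(A)=\int\lambda(A)\,d\alpha_N(\lambda)$ for Borel $A$ follows from the $C_b$-version by a routine monotone class argument, or avoid indicators altogether by testing with continuous functions squeezed between $\indi_{X\setminus K_m}$ and the indicator of a slightly larger open set).
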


Let us show how this theorem follows from Theorem \ref{Choquet} which therefore may be viewed as a non-asymptotic form of the Hewitt-Savage theorem. If $\mu_k$ is the law of the first $k$  components of an infinite exchangeable  sequence, it is $N$-representable for every $N$, so that there exists $\alpha^{N,k} \in \PP(\PP_{\frac{1}{N}}(X))$ such that
\[ 
     \mu_k=\int_{\PP_{\frac{1}{N}}(X)} F_{N,k}(\lambda) \mbox{d} \alpha^{N,k}(\lambda).
\]
In particular, for the law $\mu_1$ of $Z_1$ we have
\[
   \mu_1:=\int_{\PP(X)} \lambda \; \mbox{d} \alpha^{N,k}(\lambda).
\]
Since $X$ is Polish, $\mu_1$ is tight so there exists $\Psi$ : $X\to [0, +\infty]$ which is lower semicontinuous and coercive (i.e. with compact sublevel sets) such that
\[
   +\infty> \int_X \Psi(x) \mbox{d} \mu_1(x)=\int_{\PP(X)} \Big (\int_{X} \Psi(x) \mbox{d} \lambda(x) \Big) \mbox{d} \alpha^{N,k}(\lambda).
\]
But since $\Psi$ is coercive, $\lambda \in \PP(X) \mapsto \int_{X} \Psi(x) \mbox{d} \lambda(x)$ is coercive on $\PP(X)$. It thus follows from Prokhorov's theorem that $\alpha^{N,k}$ is tight. Hence, taking a subsequence if necessary, $\alpha^{N,k}$ converges narrowly  to  some $\alpha^k$ as $N \to \infty$, and since $F_{N,k}(\lambda)$ converges uniformly to $\lambda^{\otimes k}$, thanks to \eqref{err} we get letting $N \to \infty$:
\[ \mu_k = \int_{\PP(X)} \lambda^{\otimes k} \; \mbox{d} \alpha^k(\lambda).\]

It remains to show that $\alpha^k$ may be chosen independently of $k$. For this we simply observe that for $\ell\ge k$, $\mu_k =M_k(\mu_\ell)$. Since $M_k(\lambda^{\otimes \ell} )=\lambda^{\otimes k}$, thanks to the linearity and narrow continuity  of $M_k$ we thus get 
\[\mu_k=\int_{\PP(X)} \lambda^{\otimes k} \; \mbox{d} \alpha^\ell(\lambda), \; \mbox{ for all } \ell\ge k.\]
We then use the same tightness argument as above to see that for some sequence $\ell_n \to \infty$, $\alpha^{\ell_n}$ converges narrowly to some  $\alpha$ as $n\to +\infty$ and the Hewitt-Savage representation \pref{hsrep} holds for this measure $\alpha$.

%, but since $\mu_k$ is the $k$-marginal of $\mu_{k+1}$ one sees that one can take $\alpha^l=\alpha^k$ for every $l\le k$, we again use the same tightness argument as above to see that indeed $\alpha^l$ has a limit  $\alpha$ as $l\to +\infty$ and the Hewitt and Savage representation \pref{hsrep} holds for this measure $\alpha$.

\smallskip

Diaconis and Freedman (in \cite{Dia-Free}) obtained ${\rm{TV}}$ bounds between elements of $\PPnr(X^k)$ and mixtures of independent measures $\lambda^{\otimes k}$. More precisely, Diaconis and Freedman showed that if $\mu_k \in \PPnr(X^k)$ and $k\leq N$, there exists $\alpha \in \PP(\PP(X))$ such that, for some constant $B_k$ (for  results on their $k$-dependence see \cite{Dia-Free})
\begin{equation} \label{DiaFre}
   \Vert \mu_k -\int_{\PP(X)} \lambda^{\otimes k} \mbox{d} \alpha\Vert_{\TV} \leq \frac{B_k}{N}.
\end{equation}
These $O(\frac{1}{N})$ bounds are recovered 
%(but a priori with non optimal constants) 
by using the integral representation of $\mu_k \in \PPnr(X^k)$ of Theorem \ref{Choquet} and applying \pref{errtilde}. 

A better approximation of $\mu_k$ is obtained by adding the universally correlated correction terms from \eqref{errptilde} to the independent measures $\lambda^{\otimes k}$ in \eqref{DiaFre} and applying \eqref{errptilde}. This yields:
\begin{coro} For any measure $\mu_k$ in $\PPnr(X^k)$, and with $\alpha$ as in Theorem \ref{choquetformula}, we have for $p=1,...,k-2$ and some constant $B_k$
\begin{equation} \label{DiaFreImproved}
    \Vert \mu_k -\int_{\PP(X)} \tfrac{N^{k-1}}{\prod_{j=1}^{k-1}(N-j)}\bigl[\lambda^{\otimes k}  + \sum_{j=1}^p \frac{(-1)^j}{N^j} S_kP_j^{(k)}(\lambda )\bigr] \, \mbox{d} \alpha\Vert_{\TV} \leq \frac{B_k}{N^{p+1}}\; .
\end{equation}
\end{coro}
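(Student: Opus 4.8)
The strategy is to combine the integral representation of $\mu_k$ from Theorem \ref{Choquet} with the pointwise truncation estimate \eqref{errptilde} for extreme points. First I would use Theorem \ref{Choquet} to write $\mu_k=\int_{\PP_{\frac{1}{N}}(X)}F_{N,k}(\lambda)\,\mbox{d}\alpha(\lambda)$ with $\alpha\in\PP(\PP(X))$, $\alpha(\PP_{\frac{1}{N}}(X))=1$; this is the $\alpha$ referred to in the statement. Since $\alpha$ is carried by $\PP_{\frac{1}{N}}(X)$, the measure subtracted in \eqref{DiaFreImproved} is also an integral over $\PP_{\frac{1}{N}}(X)$, so the difference appearing inside the norm in \eqref{DiaFreImproved} equals $\int_{\PP_{\frac{1}{N}}(X)}g(\lambda)\,\mbox{d}\alpha(\lambda)$, where $g(\lambda):=F_{N,k}(\lambda)-\tfrac{N^{k-1}}{\prod_{j=1}^{k-1}(N-j)}\bigl(\lambda^{\otimes k}+\sum_{j=1}^{p}\tfrac{(-1)^j}{N^j}S_kP_j^{(k)}(\lambda)\bigr)$ is a finite signed measure depending narrowly continuously on $\lambda$.

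For the second step, I would observe that for each $\lambda\in\PP_{\frac{1}{N}}(X)$ the plan $F_{N,k}(\lambda)$ is an extreme point of $\PPnr(X^k)$ with one-point marginal $\lambda$ (Corollary \ref{C:extfinite}, Theorem \ref{extexp}~a)), so Theorem \ref{T:expon}, and in particular the estimate \eqref{errptilde}, applies with this $\lambda$ and gives $g(\lambda)=\tfrac{N^{k-1}}{\prod_{j=1}^{k-1}(N-j)}\,\eps_{N,k,p}(\lambda)$ together with $\|\eps_{N,k,p}(\lambda)\|_{\TV}\le C_k/N^{p+1}$, where $C_k$ is the explicit constant \eqref{Ck}, independent of $N$, of $p$ and of $\lambda$ (and of the size of $\supp\lambda$).

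The remaining step is to move $\|\cdot\|_{\TV}$ inside the $\alpha$-integral and to bound the scalar prefactor. Since $\|\cdot\|_{\TV}$ is a norm on finite signed measures and $\lambda\mapsto g(\lambda)$ is narrowly continuous hence Borel, the (weak) integral satisfies $\bigl\|\int_{\PP_{\frac{1}{N}}(X)}g(\lambda)\,\mbox{d}\alpha\bigr\|_{\TV}\le\int_{\PP_{\frac{1}{N}}(X)}\|g(\lambda)\|_{\TV}\,\mbox{d}\alpha$; concretely this comes from testing against $\varphi\in C_b(X^k)$ with $\|\varphi\|_\infty\le1$ and applying Fubini, exactly as in the well-definedness discussion following Theorem \ref{Choquet} (a harmless factor coming from the precise normalization of $\|\cdot\|_{\TV}$ can be absorbed into $B_k$). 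Combining with the previous step, the left side of \eqref{DiaFreImproved} is at most $\tfrac{N^{k-1}}{\prod_{j=1}^{k-1}(N-j)}\cdot C_k/N^{p+1}$. Finally, for $k\le N$ one has $\tfrac{N^{k-1}}{\prod_{j=1}^{k-1}(N-j)}=\prod_{j=1}^{k-1}\tfrac{N}{N-j}$, and each factor $\tfrac{N}{N-j}$ decreases in $N$, so the product is maximal at $N=k$, with value $k^{k-1}/(k-1)!$; hence \eqref{DiaFreImproved} holds with, e.g., $B_k:=C_k\,k^{k-1}/(k-1)!$, or any other $N$-independent upper bound for $C_k\,\tfrac{N^{k-1}}{\prod_{j=1}^{k-1}(N-j)}$.

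The whole argument is bookkeeping on top of Theorems \ref{Choquet} and \ref{T:expon}; the only point requiring a line of justification is the interchange of $\|\cdot\|_{\TV}$ with the integral against $\alpha$, which is the duality/Fubini observation above and presents no difficulty. I therefore do not expect any genuine obstacle.
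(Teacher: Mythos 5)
Your proposal is correct and follows essentially the same route as the paper, which obtains \eqref{DiaFreImproved} exactly by combining the integral representation of Theorem \ref{Choquet} with the truncation estimate \eqref{errptilde} and absorbing the $N$-independent bound on the prefactor $N^{k-1}/\prod_{j=1}^{k-1}(N-j)$ into $B_k$. Your added care about moving $\Vert\cdot\Vert_{\TV}$ inside the $\alpha$-integral and the explicit choice $B_k=C_k\,k^{k-1}/(k-1)!$ are fine and merely make explicit what the paper leaves implicit.
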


%The Choquet representation formula \pref{choquetformula} is also related  to the $TV$ bounds obtained by Diaconis-Freedman bounds \cite{Dia-Free}.  The Diaconis and Freedman result gives an $O(1/N)$ (actually with sharp bounds) in $\TV$-norm bound between elements of $\PPnr(X^k)$ and mixtures of independent measures $\lambda^{\otimes k}$.  %Rephrased in terms of $N$-representable $k$-plans, Diaconis and Freedman proved in \cite{Dia-Free} that if $\mu^N \in \PPs(X^N)$ there exists $\alpha \in \PP(\PP(X))$ such that for every $k\leq N$ one has
%\[ \Vert M_k(\mu^N) -\int_{\PP(X)} \lambda^{\otimes k} \mbox{d} \alpha\Vert_{TV} 

%, whether we get the (optimal) Diaconis-Freedman constant has to be checked. In fact we have a more precise information since in our Choquet representation we have mixtures of independent measures with quantized measures. TODO

\section{Connection with the Ewens sampling formula}\label{sec-ewens}

When looking at the coefficients arising in the classification \eqref{expan}--\eqref{coeffP}  
of extremal $N$-representable measures, we noticed some resemblance to those in the celebrated {\it Ewens distribution} or {\it Ewens sampling formula} from genetics, which -- mathematically speaking -- is a probability distribution on integer partitions. On the other hand, the sign factor $(-1)^j$ in \eqref{expan}, or $(-1)^{k-n(\bfp')}$ in the alternative expression \eqref{nonrecurmu2}, means that some coefficients are negative, so the coefficients cannot be a probability distribution on integer partitions. {\it What is happening here?} 

There is a precise connection which seems to us quite remarkable and which we now describe. 

The Ewens distribution is usually stated in terms of the following parametrization of partitions. If $\bfp'=(p'_1,...,p'_m)$ is a partition of the integer $k$, that is to say $p'_1+...+p'_m=k$ and $p'_1\ge ... \ge p'_m\ge 1$, one denotes by $m_q$ the number of components equal to $q$ (i.e., the number of times the term $q$ appears in the sum), $m_q:=| {\bfp'} {^{-1}}(q)|$, and introduces the vector $(m_1,...,m_k)$ (called allelic partition). For instance, the partition $\bfp'=(2,1,1,1)$ of $5$ corresponds to the allelic partition $(3,1,0,0,0)$, because $1$ appears $3$ times, $2$ appears once, and $3$, $4$, $5$ appear zero times. The Ewens distribution on the allelic partitions of the integer $k\in\N$ is now given by
\begin{equation}\label{ewens}
  {\mathbb P}(m_1,...,m_k;\theta ) = \frac{k!}{\theta(\theta+1)...(\theta+k-1)} \prod_{q=1}^k \frac{\theta^{m_q}}{q^{m_q} m_q!},
\end{equation}
where $\theta>0$ is a parameter. In genetics, this distribution models the genetic diversity of a population (at statistical equilibrium and under neutral selection), and $\theta$ is the {\it mutation rate}. More precisely, for a random sample of $k$ genes taken from a population at a particular locus, eq. \eqref{ewens} gives the probability that $m_1$ alleles (variant forms of the gene) appear exactly once, $m_2$ alleles appear exactly twice, and so on. The Ewens distribution  has found widespread use in biology, statistics, probability, and even   
 %algebra \cite{DiaonisRam} and 
number theory \cite{DG93}, see \cite{Ew72, KM72} for original papers and \cite{Cr16} for a recent review.  

In terms of the standard representation of the partitions of $k$, expression \eqref{ewens} becomes
\begin{equation} \label{ewens2}
   \frac{k!}{\theta(\theta+1)...(\theta+k-1)} \, \theta^{n(\bfp' )}  \prod_{i=1}^{n(\bfp' )} \frac{1}{p'_i}  \, 
 \frac{1}{\prod\limits_{q\in \mbox{\scriptsize Ran}\, \bfp'} (| \bfp' {^{-1}}(q)|)!} \, =: {\tt ew}(\bfp';\theta ).  
\end{equation}

\begin{defi} Given any partition $\bfp'$ of the integer $k$, we define the complex Ewens function $\theta\mapsto{\tt ew}(\bfp';\theta)$ by \eqref{ewens2}, where $\theta\in\C$ is a complex parameter and the arithmetic operations are the usual ones in $\C$. 
\end{defi}

For a picture when $k=3$ see Figure \ref{F:ewens}. 

\begin{figure} 
\begin{center}
\includegraphics[width=\textwidth ]{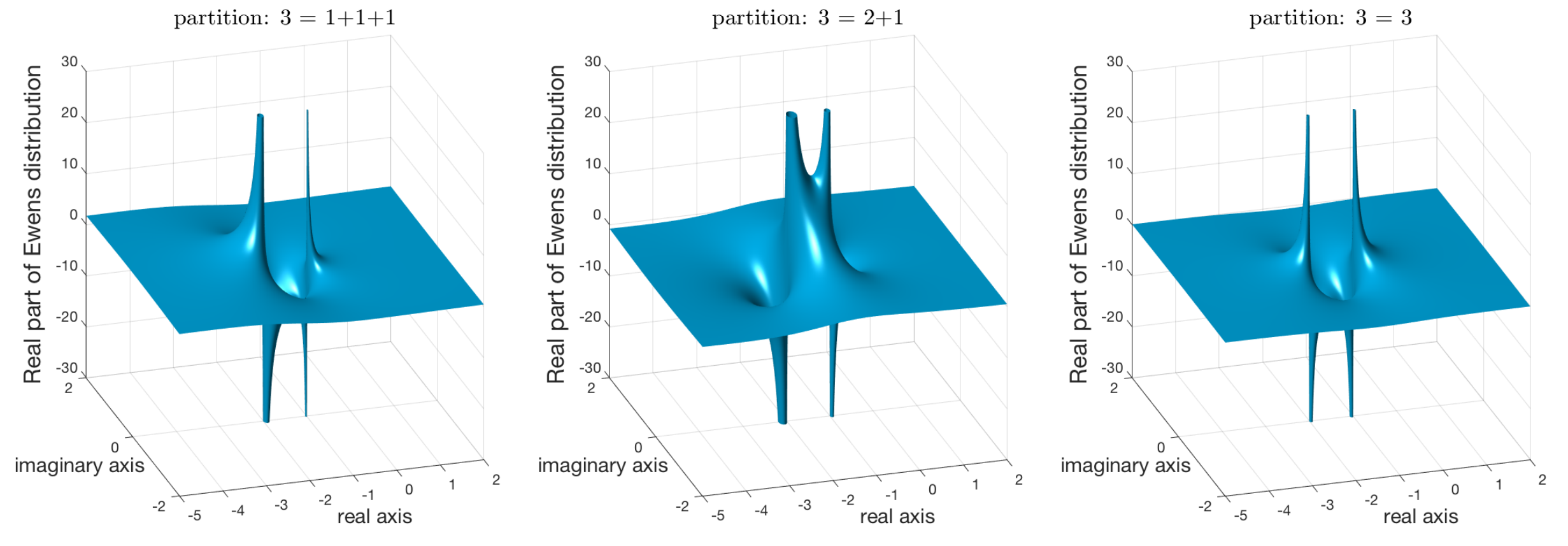}
\caption{The complex Ewens distribution $(\bfp',\theta)\mapsto {\tt ew}(\bfp';\theta)$ for $k=3$.  Since there are three partitions $\bfp'$ of $3$, it is a collection of 
%consists of 
three meromorphic functions of the complex variable $\theta$. The poles of each function are located at $\theta=-1$ and $\theta=-2$. 
%By evaluating each of the depicted functions at some fixed $\theta$, one obtains a function on partitions of $k$. 
Restricting $\theta$ to the positive real axis yields the classical Ewens distribution, which is a probability measure for each $\theta$, and has long been known to be relevant in genetics. 
Its analytic continuation to the negative real axis, which to our knowledge has not been introduced before, is a signed measure for each $\theta$, and turns out to describe extremal $N$-representable measures.}
\label{F:ewens}
\end{center}
\end{figure}

The complex Ewens function has the following properties:
\begin{enumerate}
\item ${\tt ew}(\bfp';\cdot)$ has poles exactly at the negative integers $\theta=-1$, ..., $-(k\! -\! 1)$, and is holomorphic on $\C\backslash\{-1,...,-(k\! -\! 1)\}$. In particular, ${\tt ew}(\bfp';\cdot)$ is meromorphic on $\C$. 
\item ${\tt ew}(\bfp';\cdot)$ is nonzero for the trivial partition $\bfp'=k$, and zero only at $\theta=0$ for all other partitions of $k$.
\item $\sum\limits_{\bfp' \mbox{\scriptsize partition of }k}{\tt ew}(\bfp';\cdot)$ is identically equal to $1$. 
\item $\{ {\tt ew}(\bfp';\theta)\}_{\bfp' \mbox{\scriptsize partition of }k}$ is a probability measure (i.e., in addition real and nonnegative) when $\theta$ belongs to the nonnegative  real axis, and a signed measure (i.e., in addition real but not nonnegative) when $\theta$ belongs to the negative real axis excluding the poles. 
\end{enumerate}

The first two properties are immediate from the definition. The third follows from the fact that the sum is identically equal to $1$ for positive real $\theta$ and the identity theorem from complex analysis. Property 4 is an obvious consequence of Property 3 and the definition.
 
The connection to extremal $N$-representable measures is the following. 

\begin{thm} \label{T:ewens} The coefficients $c_{\bfp'}$ in the expansion \eqref{nonrecurmu2} of extremal $N$-representable $k$-point measures are given by 
${\tt ew}(\bfp';\theta)|_{\theta=-N}$; that is, they are given by the Ewens distribution evaluated at a probabilistically meaningless but by analytic continuation admissible negative parameter value. 
\end{thm}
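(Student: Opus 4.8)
The plan is to establish the identity by directly substituting $\theta=-N$ into the explicit formula \eqref{ewens2} for the complex Ewens function and matching the result, factor by factor, with the explicit formula \eqref{nonrecurmu2} for the coefficients $c_{\bfp'}$ furnished by Proposition \ref{P:almostewens}. The only analytic input needed is that, since we work in the regime $N\ge k$, the point $\theta=-N$ lies strictly to the left of all the poles $-1,\dots,-(k-1)$ of ${\tt ew}(\bfp';\cdot)$, so that ${\tt ew}(\bfp';-N)$ is well defined; everything else is bookkeeping of signs and factorials.

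Concretely, I would first evaluate the Pochhammer-type denominator of \eqref{ewens2} at $\theta=-N$:
\[
  \theta(\theta+1)\cdots(\theta+k-1)\,\Big|_{\theta=-N}
  =\prod_{j=0}^{k-1}(j-N)=(-1)^k\prod_{j=0}^{k-1}(N-j)=(-1)^k\,N(N-1)\cdots(N-k+1),
\]
which is nonzero precisely because $N\ge k$, and coincides up to the sign $(-1)^k$ with the denominator $N(N-1)\cdots(N-k+1)$ appearing in \eqref{nonrecurmu2}. The only other $\theta$-dependent factor in \eqref{ewens2} is $\theta^{n(\bfp')}$, which becomes $(-N)^{n(\bfp')}=(-1)^{n(\bfp')}N^{n(\bfp')}$. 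All remaining factors of \eqref{ewens2}, namely $k!$, the product $\prod_{i=1}^{n(\bfp')}\tfrac1{p'_i}$, and $\bigl(\prod_{q\in\mathrm{Ran}\,\bfp'}(|\bfp'{^{-1}}(q)|)!\bigr)^{-1}$, do not involve $\theta$ and are literally the same as the corresponding factors in \eqref{nonrecurmu2}. Collecting the signs, the evaluation produces the factor $(-1)^{n(\bfp')}/(-1)^k=(-1)^{n(\bfp')-k}=(-1)^{k-n(\bfp')}$, which is exactly the sign prefactor in \eqref{nonrecurmu2}, and the powers of $N$ combine into $N^{n(\bfp')}/\bigl(N(N-1)\cdots(N-k+1)\bigr)$, again as in \eqref{nonrecurmu2}. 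Hence ${\tt ew}(\bfp';-N)=c_{\bfp'}$ for every partition $\bfp'$ of $k$, which is the claim.

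I do not anticipate a genuine obstacle: the statement is a \emph{recognition} result, asserting that two closed-form expressions derived in entirely different contexts (the Ewens sampling formula from genetics versus the convex geometry of $N$-representable plans) are the same up to the indicated substitution, and the verification above is essentially a matching of symbols. The only points requiring a word of care are the elementary sign identity $(-1)^{n(\bfp')-k}=(-1)^{k-n(\bfp')}$ and the observation that $\theta=-N$ avoids the poles, which is automatic under the standing hypothesis $N\ge k$. (Alternatively one can phrase the argument structurally: both $N\mapsto N(N-1)\cdots(N-k+1)\,c_{\bfp'}$ and $\theta\mapsto\theta(\theta+1)\cdots(\theta+k-1)\,{\tt ew}(\bfp';\theta)$ are, after the change of variable, the same polynomial built from $\prod_{j=0}^{k-1}(\theta+j)$, cf.\ \eqref{ceq} and \eqref{coeff}, so they agree as rational functions and in particular at $\theta=-N$; but the direct substitution above is shorter and self-contained.)
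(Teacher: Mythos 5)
Your proposal is correct and matches the paper's (implicit) argument: the paper states Theorem \ref{T:ewens} as a direct consequence of comparing \eqref{nonrecurmu2} with the definition \eqref{ewens2} at $\theta=-N$, which is precisely the term-by-term substitution and sign/factorial matching you carry out. Your added remarks that $\theta=-N$ avoids the poles because $N\ge k$ and that $(-1)^{n(\bfp')-k}=(-1)^{k-n(\bfp')}$ are exactly the (elementary) points the paper leaves to the reader.
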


%We note that ${\tt ew}(\bfp';\theta)$ is real and positive for real and positive $\theta$, but attains real values of either sign when $\theta$ is real and negative. 

Properties 3 and 4 of the complex Ewens distribution thus recover the phenomenon discussed at the beginning of the section that the coefficients $c_{\bfp'}$ in \eqref{nonrecurmu2} are only a signed measure of total mass $1$ but not a probability measure. (As already noted below Theorem \ref{T:expan}, due to this phenomenon it is highly nontrivial that the induced measures \eqref{nonrecurmu}, $\lambda\in\calP_{\frac{1}{N}}(X)$, are nevertheless probability measures.) 

Some sort of correspondence between $N$-representable $k$-point measures and the Ewens distribution at $\theta=-N$ also exists at the poles. By Property 1, the complex Ewens distribution stops making mathematical sense at the points $-N$ when $N\in\N$, $N<k$; but these are precisely the $N$'s for which the notion of $N$-representability (see Definition \eqref{DefNrep}) stops making sense as a $k$-plan cannot be $N$-representable when $N<k$. Thus the two poles seen in Figure \ref{F:ewens} at $-1$ and $-2$ reflect the fact that it makes no sense for a $3$-point probability measure to be $1$- or $2$-representable.

\section{Applications to optimal transport}  \label{sec-opti}
 
 Our interest in the structure of the set $\PPnr(X^k)$ is motivated by symmetric multi-marginal optimal transport problems. More precisely, given integers $2\leq k \leq N$ and $\Phi\in C_b(X^k)$ \emph{symmetric} (i.e., $\Phi(x_{\sigma(1)}, \ldots, x_{\sigma(k)}) = \Phi(x_1, \ldots, x_k)$ for every $(x_1, \ldots, x_k)\in X^k$ and every permutation $\sigma \in  \calS_k$), we consider the symmetric cost $c_{\Phi}$ defined on $X^N$ by
\begin{equation} \label{costexpan}
  c_{\Phi}(x_1, \cdots, x_N):= \frac{1}{{{N}\choose{k}} } \sum_{1\leq i_1 <i_2\ldots <i_k\leq N} \Phi(x_{i_1}, \ldots, x_{i_k}).
\end{equation}
 In this setting $N$ is the total number of particles in the system, $X^N$ is the space of all $N$-particle configurations, and $\Phi$ is a $k$-body interaction potential. In practice $N$ is much larger than $k$. Given $\rho\in \PP(X)$ we are interested in the multi-marginal optimal transport problem
 \begin{equation}\label{mmk}
 C_{N,k}(\rho):=\inf\Big\{ \int_{X^N} c_{\Phi}(x_1, \ldots, x_N) \mbox{d}\gamma(x_1, \ldots, x_N) \; : \;   \gamma\in \PPs(X^N), \; M_1 \gamma =\rho\Big\}.
 \end{equation}
 An important example which has received much recent interest is $k=2$, $X=\R^3$, $\Phi(x_1,x_2)=|x_1-x_2|^{-1}$ (optimal transport with Coulomb cost \cite{CFK, BDG}), in which case \eqref{mmk} arises as the strictly correlated limit of Hohenberg-Kohn density functional theory \cite{CFK}. 
 Thanks to \eqref{costexpan}, the minimization can be reformulated in terms of the $k$-marginal $\mu_k=M_k(\gamma)\in \PPnr(X^k)$:
 \begin{equation}\label{mmknr}
 C_{N,k}(\rho):=   \inf\Big\{ \int_{X^k} \Phi  \mbox{d} \mu_k\; : \;   \mu_k \in \PPnr(X^k), \; M_1(\mu_k)=\rho\Big\}.
 \end{equation}

 \subsection{A $\frac{1}{N}$-quantized polynomial convexification problem}

The de Finetti style representation from Theorem \ref{Choquet}, together with the fact that $M_1(F_{N,k}(\lambda))=\lambda$ for every $\lambda\in \PP_{\frac{1}{N}}(X)$, enables us to write $C_{N,k}(\rho)$ as follows.

\begin{thm} \label{rewrite} {\rm (Reformulation of symmetric multi-marginal optimal transport)} The functional $C_{N,k}$ satisfies
\begin{equation}\label{convquantal}
C_{N,k}(\rho)=\inf   \Big\{\int_{\PP_{\frac{1}{N}}(X)}  \Big(\int_{X^k} \Phi  \;\mbox{d} F_{N,k}(\lambda)\Big) \mbox{d} \alpha(\lambda) \; : \;  \alpha \in \PP(\PP_{\frac{1}{N}}(X)), \, \eqref{margconsal} \Big\},
\end{equation}
with prescribed marginal constraint
\begin{equation}\label{margconsal}
\int_{\PP_{\frac{1}{N}}(X)} \lambda \; \mbox{d} \alpha(\lambda)=\rho. 
\end{equation}
\end{thm}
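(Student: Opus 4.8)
The plan is to read off \eqref{convquantal}--\eqref{margconsal} from the de Finetti style representation of Theorem \ref{Choquet} together with the one-point marginal identity $M_1\bigl(F_{N,k}(\lambda)\bigr)=\lambda$ for $\lambda\in\PP_{\frac{1}{N}}(X)$. Concretely, I would prove the two inequalities between the optimal value \eqref{mmknr} of $C_{N,k}(\rho)$ and the right-hand side of \eqref{convquantal} by exhibiting, for each competitor of one problem, a competitor of the other with the same cost. (Here a probability measure $\alpha\in\PP(\PP(X))$ with $\alpha\bigl(\PP_{\frac{1}{N}}(X)\bigr)=1$ is canonically identified with an element of $\PP(\PP_{\frac{1}{N}}(X))$, since $\PP_{\frac{1}{N}}(X)$ is closed, hence Borel, in $\PP(X)$ by Lemma \ref{L:closed}.)

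First I would take any $\mu_k\in\PPnr(X^k)$ with $M_1\mu_k=\rho$, as in \eqref{mmknr}. By Theorem \ref{Choquet} there is $\alpha\in\PP(\PP(X))$ with $\alpha\bigl(\PP_{\frac{1}{N}}(X)\bigr)=1$ such that $\mu_k=\int_{\PP_{\frac{1}{N}}(X)}F_{N,k}(\lambda)\,\mbox{d}\alpha(\lambda)$ in the test-function sense recalled right after that theorem, i.e. $\int_{X^k}\psi\,\mbox{d}\mu_k=\int_{\PP_{\frac{1}{N}}(X)}\bigl(\int_{X^k}\psi\,\mbox{d}F_{N,k}(\lambda)\bigr)\mbox{d}\alpha(\lambda)$ for every $\psi\in C_b(X^k)$. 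Taking $\psi(x_1,\dots,x_k)=\varphi(x_1)$ with $\varphi\in C_b(X)$ and using $M_1F_{N,k}(\lambda)=\lambda$ shows $\rho=M_1\mu_k=\int_{\PP_{\frac{1}{N}}(X)}\lambda\,\mbox{d}\alpha(\lambda)$, i.e. $\alpha$ satisfies \eqref{margconsal}; taking $\psi=\Phi\in C_b(X^k)$ gives $\int_{X^k}\Phi\,\mbox{d}\mu_k=\int_{\PP_{\frac{1}{N}}(X)}\bigl(\int_{X^k}\Phi\,\mbox{d}F_{N,k}(\lambda)\bigr)\mbox{d}\alpha(\lambda)$. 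Hence $C_{N,k}(\rho)$ is bounded below by the right-hand side of \eqref{convquantal}.

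For the reverse inequality I would run the same computation backwards: given $\alpha\in\PP(\PP_{\frac{1}{N}}(X))$ satisfying \eqref{margconsal}, set $\mu_k:=\int_{\PP_{\frac{1}{N}}(X)}F_{N,k}(\lambda)\,\mbox{d}\alpha(\lambda)$. Theorem \ref{Choquet} gives $\mu_k\in\PPnr(X^k)$; testing against $\varphi\in C_b(X)$ together with $M_1F_{N,k}(\lambda)=\lambda$ and \eqref{margconsal} gives $M_1\mu_k=\rho$, so $\mu_k$ is admissible in \eqref{mmknr}; and testing against $\Phi$ gives $\int_{X^k}\Phi\,\mbox{d}\mu_k=\int_{\PP_{\frac{1}{N}}(X)}\bigl(\int_{X^k}\Phi\,\mbox{d}F_{N,k}(\lambda)\bigr)\mbox{d}\alpha(\lambda)$. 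Thus $C_{N,k}(\rho)$ is also bounded above by the right-hand side of \eqref{convquantal}, completing the proof.

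This argument is essentially bookkeeping, so I do not expect a genuine obstacle; the only points needing care are that the scalar integrand $\lambda\mapsto\int_{X^k}\Phi\,\mbox{d}F_{N,k}(\lambda)$ is bounded and narrowly continuous on $\PP_{\frac{1}{N}}(X)$ — which is exactly the remark following Theorem \ref{Choquet}, using $\Phi\in C_b(X^k)$ and the narrow-to-narrow continuity and finite total variation of $F_{N,k}(\lambda)$ — so that the right-hand side of \eqref{convquantal} is well defined, and that ``commuting $M_1$ with $\int\cdot\,\mbox{d}\alpha$'' is legitimate, which is immediate once \eqref{choquetformula} is read in its test-function sense. Note also that the symmetry hypothesis on $\Phi$ plays no role in this reformulation, every element of $\PPnr(X^k)$ and every $F_{N,k}(\lambda)$ being already symmetric; it was used only in passing from \eqref{mmk} to \eqref{mmknr}, which has already been done.
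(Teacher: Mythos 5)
Your proposal is correct and follows exactly the route the paper intends: the paper gives only the one-sentence justification that \eqref{convquantal}--\eqref{margconsal} follow from Theorem \ref{Choquet} combined with the identity $M_1\bigl(F_{N,k}(\lambda)\bigr)=\lambda$ for $\lambda\in\PP_{\frac{1}{N}}(X)$, and your two-inequality bookkeeping (including the identification of $\alpha$ with an element of $\PP(\PP_{\frac{1}{N}}(X))$ and the well-definedness remark after Theorem \ref{Choquet}) is precisely the spelled-out version of that argument.
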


In view of formulae \eqref{expan}--\eqref{coeffP}, one can observe that $\lambda \in \PP_{\frac{1}{N}(X)} \mapsto \int_{X^k} \Phi  \;\mbox{d} F_{N,k}(\lambda)$ is a polynomial of degree $k$ expression in the weights of the discrete measure $\lambda$, for instance
\[ \int_{X^2} \Phi  \;\mbox{d} F_{N,2}(\lambda)=\frac{N}{N-1} \int_{X^2} \Phi(x,y) \mbox{d}\lambda(x) \mbox{d} \lambda(y) -\frac{1}{N-1} \int_X \Phi(x,x) \mbox{d}\lambda(x)\]
and
\[\begin{split}
 \int_{X^3} \Phi  \;\mbox{d} F_{N,3}(\lambda)&=\frac{N^2}{(N-1)(N-2)} \int_{X^3} \Phi(x,y,z) \mbox{d}\lambda(x) \mbox{d} \lambda(y) \mbox{d}\lambda(z) \\
& -\frac{3N}{(N-1)(N-2)}  \int_{X^2} \Phi(x,x,y) \mbox{d}\lambda(x) \mbox{d} \lambda(y)\\
& +\frac{2}{(N-1)(N-2)} \int_{X} \Phi(x,x,x) \mbox{d}\lambda(x) .
 \end{split}\]
Defining the polynomial $P_{N,k}$ for every single marginal (not necessarily $\frac{1}{N}$-quantized) $\lambda\in \PP(X)$ by
\begin{equation}\label{defPNk}
P_{N,k}(\lambda):=\int_{X^k} \Phi  \;\mbox{d} F_{N,k}(\lambda), \; \mbox{ for all } \lambda\in \PP(X) ,
\end{equation}
we see that minimization problem \pref{convquantal}-\pref{margconsal} is a $\frac{1}{N}$-quantized constrained version of the convexification of the polynomial $P_{N,k}$:
\begin{equation}\label{defpnkconv}
P_{N,k}^{**}(\rho):=\inf_{\alpha \in \PP(\PP(X))} \Big\{ \int_{\PP(X)} P_{N,k}(\lambda) \mbox{d} \alpha(\lambda) \; :\; \int_{\PP(X)} \lambda \mbox{d} \alpha(\lambda)=\rho\Big\}.
\end{equation}
In particular note that 
\begin{equation}\label{lowerbcc}
C_{N,k}(\rho) \geq P_{N,k}^{**}(\rho), \; \forall \rho \in \PP(X). 
\end{equation}

To understand the advantage of the formulation \pref{convquantal}-\pref{margconsal} compared to the initial multi-marginal problem \pref{mmk}, it is worth considering in detail the finite case where $X=X_\ell$ is finite with $\ell$ elements. In this case both \pref{mmk} and \pref{convquantal}-\pref{margconsal} are linear programs which have ${N+\ell-1}\choose{\ell-1}$ variables and $\ell$ constraints. But the special structure of \pref{convquantal}-\pref{margconsal} makes it much more appealing from a computational viewpoint. Indeed, the computation of the value function $C_{N,k}$ given by \pref{convquantal}-\pref{margconsal} amounts to finding the convex envelope of the restriction of the polynomial $P_{N,k}$ to the finite set $\PP_{\frac{1}{N}}(X)$. In the simplest case where $\ell=2$, this amounts to computing the convex hull of  $N+1$ points in the plane located in the graph of $P_{N,k}$. This convex envelope can be computed exactly in linear in $N$ time thanks to the Graham scan algorithm \cite{Graham} for instance.  

  \subsection{Convergence as $N\to \infty$}

Now we consider the asymptotic behavior of $C_{N, k}$ as $N\to \infty$. As first emphasized in \cite{CFP} in the case of the Coulomb cost (or more general pairwise interactions with a potential having a positive Fourier transform),  the Hewitt-Savage theorem enables one to drastically simplify the analysis of problems like \pref{mmk} as $N\to \infty$. Since $\rho^{\otimes k}$ is admissible in the optimal transport problem \pref{mmknr} we have
\[C_{N,k}(\rho)\leq P_k(\rho):= \int_{X^k}  \Phi \mbox{d} \rho^{\otimes k}, \; \forall \rho \in \PP(X)\]
but since $C_{N,k}$ is obviously convex this also gives
\begin{equation}\label{upperbcc}
C_{N,k}(\rho) \leq P_k^{**}(\rho), \; \mbox{ for all } \rho \in \PP(X).
\end{equation}
Recalling \pref{errtilde}, we observe that for some constant $C_k$ we have for every $\lambda \in \PP(X)$
\[P_{N,k}(\lambda)\geq \frac{(N-k)! N^k}{N!} \Big(P_k(\lambda)-\frac{C_k \Vert \Phi \Vert_{\infty}}{N}.  \Big)\]
Taking convex envelopes and using \pref{lowerbcc} we thus get, for every $\rho \in \PP(X)$:
\begin{equation}\label{uniformcv}
 \frac{(N-k)! N^k}{N!} \Big(P_k^{**} (\rho)-\frac{C_k \Vert \Phi \Vert_{\infty}}{N}  \Big) \leq C_{N,k}(\rho)\leq P_k^{**}(\rho).
\end{equation}
So $C_{N,k}$ converges uniformly on $\PP(X)$ to $P_k^{**}$ as $N\to +\infty$. We also have a $\Gamma$-convergence result:

\begin{thm}\label{gammac}
As $N\to\infty$, the sequence of functionals $C_{N,k}$ defined in \eqref{mmk} $\Gamma$-converges with respect to the narrow topology of $\PP(X)$ to $P_k^{**}$.
\end{thm}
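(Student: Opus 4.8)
The plan is to establish directly the two inequalities defining $\Gamma$-convergence, obtaining essentially everything from the two-sided bound \eqref{uniformcv} and the estimate \eqref{upperbcc} already proved. Recall that $C_{N,k}\xrightarrow{\Gamma}P_k^{**}$ for the narrow topology on $\PP(X)$ means: (i) $\liminf_N C_{N,k}(\rho_N)\ge P_k^{**}(\rho)$ whenever $\rho_N\to\rho$ narrowly, and (ii) every $\rho\in\PP(X)$ has a recovery sequence $\rho_N\to\rho$ with $\limsup_N C_{N,k}(\rho_N)\le P_k^{**}(\rho)$.

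Inequality (ii) costs nothing: for the constant sequence $\rho_N\equiv\rho$, \eqref{upperbcc} gives $C_{N,k}(\rho)\le P_k^{**}(\rho)$ for every $N$, hence $\limsup_N C_{N,k}(\rho_N)\le P_k^{**}(\rho)$. For (i), I would start from the lower bound in \eqref{uniformcv} evaluated at $\rho_N$,
\[
 C_{N,k}(\rho_N)\ \ge\ \frac{(N-k)!\,N^k}{N!}\Bigl(P_k^{**}(\rho_N)-\frac{C_k\|\Phi\|_\infty}{N}\Bigr),
\]
and observe that $\frac{(N-k)!\,N^k}{N!}=\prod_{j=1}^{k-1}\frac{N}{N-j}\to1$ while $|P_k^{**}|\le\|\Phi\|_\infty$ by definition of the convex envelope; hence the prefactor and the $O(1/N)$ term are harmless and $\liminf_N C_{N,k}(\rho_N)\ge\liminf_N P_k^{**}(\rho_N)$. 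Thus (i) — and with it the theorem — reduces to showing that $P_k^{**}$ is narrowly lower semicontinuous.

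To prove $\liminf_N P_k^{**}(\rho_N)\ge P_k^{**}(\rho)$, I would choose near-optimal competitors $\alpha_N\in\PP(\PP(X))$ with barycenter $\int\lambda\,\mathrm d\alpha_N=\rho_N$ and $\int P_k\,\mathrm d\alpha_N\le P_k^{**}(\rho_N)+\frac1N$, where $P_k(\lambda)=\int_{X^k}\Phi\,\mathrm d\lambda^{\otimes k}$ belongs to $C_b(\PP(X))$. The key step is to lift the tightness of $(\rho_N)$ from $X$ to $\PP(X)$ to $\PP(\PP(X))$, exactly as in Section \ref{sec-hs}: since the convergent sequence $(\rho_N)$ is tight, there is a lower semicontinuous coercive $\Psi:X\to[0,+\infty]$ with $M:=\sup_N\int\Psi\,\mathrm d\rho_N<\infty$; then $\lambda\mapsto\int\Psi\,\mathrm d\lambda$ is coercive on $\PP(X)$ and $\int\bigl(\int\Psi\,\mathrm d\lambda\bigr)\mathrm d\alpha_N(\lambda)=\int\Psi\,\mathrm d\rho_N\le M$, so Prokhorov's theorem makes $(\alpha_N)$ tight in $\PP(\PP(X))$. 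Extracting a narrow limit $\alpha_N\to\alpha$, testing against $\lambda\mapsto\int\varphi\,\mathrm d\lambda$ ($\varphi\in C_b(X)$) identifies the barycenter of $\alpha$ as $\rho$, and $P_k\in C_b(\PP(X))$ yields $\int P_k\,\mathrm d\alpha_N\to\int P_k\,\mathrm d\alpha\ge P_k^{**}(\rho)$, hence $\liminf_N P_k^{**}(\rho_N)\ge P_k^{**}(\rho)$.

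The hard part is this tightness step — passing from tightness in $X$ to tightness in $\PP(\PP(X))$ — but it is precisely the argument already carried out in Section \ref{sec-hs}, so no new difficulty arises; the remaining ingredients (the vanishing of the prefactors, the continuity of $\lambda\mapsto P_k(\lambda)$ and of the barycenter functionals, the trivial recovery sequence) are routine. A slightly different but equivalent route would avoid mentioning $P_k^{**}$ in part (i): take near-minimizers $\mu_{k,N}$ of $C_{N,k}(\rho_N)$, write $\mu_{k,N}=\int F_{N,k}(\lambda)\,\mathrm d\alpha_N(\lambda)$ via Theorem \ref{Choquet}, use \eqref{errtilde} to replace $\int\Phi\,\mathrm d F_{N,k}(\lambda)$ by $P_k(\lambda)$ up to $O(1/N)$ uniformly in $\lambda$, and run the same tightness-and-limit argument on $(\alpha_N)$; this simultaneously proves the lower semicontinuity of $P_k^{**}$.
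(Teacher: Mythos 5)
Your proposal is correct and follows essentially the same route as the paper's proof: the $\Gamma$-limsup inequality via the constant recovery sequence and \eqref{upperbcc}, and the $\Gamma$-liminf inequality by combining the lower bound in \eqref{uniformcv} (whose prefactor tends to $1$ and whose $O(1/N)$ term vanishes, using $|P_k^{**}|\le\|\Phi\|_\infty$) with the narrow lower semicontinuity of $P_k^{**}$. The only difference is that the paper simply asserts this lower semicontinuity, whereas you actually prove it by lifting tightness of $(\rho_N)$ to the competitors $\alpha_N\in\PP(\PP(X))$ via a coercive $\Psi$ and Prokhorov, exactly as in Section \ref{sec-hs}; this is a correct and worthwhile supplement, not a different method.
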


\begin{proof}
The $\Gamma$-limsup inequality obviously follows from \pref{upperbcc}. As for the $\Gamma$-liminf inequality, it follows directly from \pref{uniformcv} and the lower semicontinuity of $P_k^{**}$ with respect to the narrow topology.
\end{proof}

To conclude, we see that the asymptotic problem obtained by letting $N\to \infty$ in \pref{mmknr} amounts to computing the convex envelope of the polynomial  of degree $k$ interaction functional $P_k$. This might be a challenging computational task in general but we believe that the theory developed by Lasserre for polynomial optimization (see in particular \cite{Laslar}) might be useful in certain situations.

{\bf Acknowledgments:} G.C. gratefully acknowledges the hospitality of TU Munich, where this work was begun while he held a John Von Neumann visiting professorship.

\bibliographystyle{plain}

\bibliography{bibli}

\begin{thebibliography}{10}

\bibitem{Aldous85}
David~J. Aldous.
\newblock Exchangeability and related topics.
\newblock In {\em \'{E}cole d'\'{e}t\'{e} de probabilit\'{e}s de
  {S}aint-{F}lour, {XIII}---1983}, volume 1117 of {\em Lecture Notes in Math.},
  pages 1--198. Springer, Berlin, 1985.

\bibitem{Bi99}
Patrick Billingsley.
\newblock {\em Convergence of Probability Measures}.
\newblock Wiley-Interscience, 1999.

\bibitem{Bobkov}
Sergey~G. Bobkov.
\newblock Generalized symmetric polynomials and an approximate de {F}inetti
  representation.
\newblock {\em J. Theoret. Probab.}, 18(2):399--412, 2005.

\bibitem{Brenier}
Yann Brenier.
\newblock Polar factorization and monotone rearrangement of vector-valued
  functions.
\newblock {\em Comm. Pure Appl. Math.}, 44(4):375--417, 1991.

\bibitem{BDG}
Giuseppe Buttazzo, Luigi De~Pascale, and Paola Gori-Giorgi.
\newblock Optimal-transport formulation of electronic density-functional
  theory.
\newblock {\em Phys. Rev. A}, 85:062502, June 2012.

\bibitem{Co74}
Louis Comtet.
\newblock {\em Advanced Combinatorics: The Art of Finite and Infinite
  Expansions}.
\newblock Springer Netherlands, Dordrecht, Holland, 1974.

\bibitem{CFK}
Codina Cotar, Gero Friesecke, and Claudia Kl{\"{u}}ppelberg.
\newblock Density functional theory and optimal transportation with {C}oulomb
  cost.
\newblock {\em Communications on Pure and Applied Mathematics}, 66(4):548--599,
  2013.

\bibitem{CFP}
Codina Cotar, Gero Friesecke, and Brendan Pass.
\newblock Infinite-body optimal transport with {C}oulomb cost.
\newblock {\em Calc. Var. Partial Differential Equations}, 54(1):717--742,
  2015.

\bibitem{Cr16}
Harry Crane.
\newblock The ubiquitous {E}wens sampling formula.
\newblock {\em Statist. Sci.}, 31(1):1--19, 02 2016.

\bibitem{DellaMeyer}
Claude Dellacherie and Paul-Andr\'{e} Meyer.
\newblock {\em Probabilities and potential. {B}}, volume~72 of {\em
  North-Holland Mathematics Studies}.
\newblock North-Holland Publishing Co., Amsterdam, 1982.
\newblock Theory of martingales, Translated from the French by J. P. Wilson.

\bibitem{Dia-Free}
P.~Diaconis and D.~Freedman.
\newblock Finite exchangeable sequences.
\newblock {\em Ann. Probab.}, 8(4):745--764, 1980.

\bibitem{Diaconis77}
Persi Diaconis.
\newblock Finite forms of de {F}inetti's theorem on exchangeability.
\newblock {\em Synthese}, 36(2):271--281, 1977.
\newblock Foundations of probability and statistics, II.

\bibitem{DG93}
Peter Donnelly and Geoffrey Grimmett.
\newblock {On the Asymptotic Distribution of Large Prime Factors}.
\newblock {\em Journal of the London Mathematical Society}, s2-47(3):395--404,
  06 1993.

\bibitem{Ew72}
W.J. Ewens.
\newblock The sampling theory of selectively neutral alleles.
\newblock {\em Theoretical Population Biology}, 3(1):87 -- 112, 1972.

\bibitem{FMPCK13}
Gero Friesecke, Christian~B. Mendl, Brendan Pass, Codina Cotar, and Claudia
  Kl{\"{u}}ppelberg.
\newblock N-density representability and the optimal transport limit of the
  {H}ohenberg-{K}ohn functional.
\newblock {\em The Journal of Chemical Physics}, 139(16):164109, 2013.

\bibitem{GFDV}
Gero Friesecke and Daniela V\"ogler.
\newblock Breaking the {C}urse of {D}imension in {M}ulti-{M}arginal
  {K}antorovich {O}ptimal {T}ransport on {F}inite {S}tate {S}paces.
\newblock {\em SIAM Journal on Mathematical Analysis}, 50(4):3996--4019, 2018.

\bibitem{Gangbo-McCann}
Wilfrid Gangbo and Robert~J. McCann.
\newblock The geometry of optimal transportation.
\newblock {\em Acta Math.}, 177(2):113--161, 1996.

\bibitem{Graham}
Ronald Graham.
\newblock An efficient algorithm for determining the convex hull of a finite
  planar set.
\newblock {\em Information Processing Letters}, 1:132--133, 1972.

\bibitem{Hewitt-Savage}
Edwin Hewitt and Leonard~J. Savage.
\newblock Symmetric measures on {C}artesian products.
\newblock {\em Trans. Amer. Math. Soc.}, 80:470--501, 1955.

\bibitem{Ho94}
Lars H\"ormander.
\newblock {\em Notions of Convexity}.
\newblock Birkh\"auser, Boston, Massachusetts, 1994.

\bibitem{JKT16}
Svante Janson, Takis Konstantopoulos, and Linglong Yuan.
\newblock On a representation theorem for finitely exchangeable random vectors.
\newblock {\em J. Math. Anal. Appl.}, 442(2):703--714, 2016.

\bibitem{Ja86}
E.~Jaynes.
\newblock Some applications and extensions of the de {F}inetti representation
  theorem.
\newblock {\em in: {B}ayesian inverence and decision techniques: {E}ssays in
  honor of {B}runo de {F}inetti}, pages 31--42, 1986.

\bibitem{Kal05}
Olav Kallenberg.
\newblock {\em Probabilistic symmetries and invariance principles}.
\newblock Probability and its Applications (New York). Springer, New York,
  2005.

\bibitem{KM72}
S.~Karlin and J.~McGregor.
\newblock Addendum to a paper of {W}. {E}wens.
\newblock {\em Theoretical Population Biology}, 3(1):113 -- 116, 1972.

\bibitem{KS06}
G.~Jay Kerns and G\'{a}bor~J. Sz\'{e}kely.
\newblock De {F}inetti's theorem for abstract finite exchangeable sequences.
\newblock {\em J. Theoret. Probab.}, 19(3):589--608, 2006.

\bibitem{KhooYing}
Yuehaw Khoo and Lexing Ying.
\newblock Convex {R}elaxation {A}pproaches for {S}trictly {C}orrelated
  {D}ensity {F}unctional {T}heory.
\newblock {\em SIAM Journal on Scientific Computing}, 41(4):B773--B795, 2019.

\bibitem{Laslar}
R.~Laraki and J.~B. Lasserre.
\newblock Computing uniform convex approximations for convex envelopes and
  convex hulls.
\newblock {\em J. Convex Anal.}, 15(3):635--654, 2008.

\end{thebibliography}

\end{document}